\DeclareSymbolFont{AMSb}{U}{msb}{m}{n}
\DeclareMathSymbol{\Z}{\mathbin}{AMSb}{"5A}
\DeclareMathSymbol{\R}{\mathbin}{AMSb}{"52}
\DeclareMathSymbol{\N}{\mathbin}{AMSb}{"4E}
\DeclareMathSymbol{\Q}{\mathbin}{AMSb}{"51}
\newcommand{\NN}{\mathbb{N}}
\newcommand{\ZZ}{\mathbb{Z}}
\newcommand{\simga}{\sigma}
\newcommand{\detla}{\delta}
\newcommand{\Th}{\textup{Th}}
\newcommand{\floor}[1]{\lfloor #1 \rfloor}
\newcommand{\mc}[1]{\mathcal{#1}}
\newcommand{\ob}[1]{\overline{#1}}
\newcommand{\vareps}{\varepsilon}
\def\Ind{\setbox0=\hbox{$x$}\kern\wd0\hbox to 0pt{\hss$\mid$\hss}
\lower.9\ht0\hbox to 0pt{\hss$\smile$\hss}\kern\wd0}
\def\Notind{\setbox0=\hbox{$x$}\kern\wd0\hbox to 0pt{\mathchardef
\nn=12854\hss$\nn$\kern1.4\wd0\hss}\hbox to
0pt{\hss$\mid$\hss}\lower.9\ht0 \hbox to
0pt{\hss$\smile$\hss}\kern\wd0}
\newtheorem{thm}{Theorem}[section]
\newtheorem{lem}[thm]{Lemma}
\newtheorem{cor}[thm]{Corollary}
\newtheorem{prop}[thm]{Proposition}
\newtheorem{fact}[thm]{Fact}
\newtheorem{ass}[thm]{Assumption}
\newtheorem{quest}[thm]{Question}
\theoremstyle{definition}
\newtheorem{definition}[thm]{Definition}
\theoremstyle{remark}
\newtheorem{remark}[thm]{Remark}
\theoremstyle{remark}
\theoremstyle{remark}
\newtheorem{claim}[thm]{Claim}
\theoremstyle{remark}
\newtheorem{facta}[thm]{Fact}
\theoremstyle{remark}
\theoremstyle{remark}
\begin{document}
\bibliographystyle{plain}

\title[Discrete sets in strong OAGs]{Discrete sets definable in strong expansions of ordered Abelian groups}

\author{Alfred Dolich and John Goodrick}

\thanks{The first author's research was partially supported by PSC-CUNY Grant \#63392-00 51. The second author's research was carried out in part during a semester of paid leave supported by the Universidad de los Andes (Semestre de Trabajo Académico Independiente) and was also partially supported by the grants INV-2018-50-1424 and INV-2023-162-2840 from the Faculty of Sciences of the Universidad de los Andes.}

\address{Dept. of Math and CS \\ Kingsborough Community College (CUNY) \\ 2001 Oriental Blvd.\\ Brooklyn, NY 11235}
\address{Department of Mathematics \\ CUNY Graduate Center \\ 365 5th Ave. \\ New York, NY 10016}
\email{alfredo.dolich@kbcc.cuny.edu}

\address{Departamento de Matemáticas \\ Universidad de los Andes \\ Carrera 1 No. 18A-12 \\ Bogotá, COLOMBIA 111711}
\email{jr.goodrick427@uniandes.edu.co}

\maketitle

\begin{abstract}

We study the structure of infinite discrete sets $D$ definable in expansions of ordered Abelian groups whose theories are strong and definably complete, with particular emphasis on the set $D'$ comprised of differences between successive elements. In particular, if the burden of the structure is at most $n$, then the result of applying the operation $D \mapsto D'$ $n$ times must be a finite set (Theorem~\ref{finite_diff_set}). In the case when the structure is densely ordered and has burden $2$, we show that any definable unary discrete set must be definable in some elementary extension of the structure $\langle \R; <, +, \Z\rangle$ (Theorem~\ref{def_in_G}).

\end{abstract}

\section{Introduction}

In this article we will present new results on the structure of infinite discrete sets definable in ordered Abelian groups whose theories are strong with special emphasis on the case when the structure has finite dp-rank. Suppose that $\mc{R} = \langle R; +, < \ldots \rangle$ is a divisible ordered Abelian group (possibly with additional structure) and $D \subseteq R$ is a definable set which is discrete according to the order topology. Recall that by a result of \cite{Goodrick_dpmin}, if the theory of $\mc{R}$ is dp-minimal (that is, of dp-rank $1$), then $D$ must be finite, but there are examples of $\mc{R}$ of dp-rank $2$ in which an infinite discrete set is definable: for instance,  $\langle \R; +, <, \Z \rangle$, the expansion of  the additive group of the reals by a predicate for the set of integers (see \cite{DG} for details). On the other hand, when the theory of $\mc{R}$ is strong (in particular, when it has finite dp-rank), then by general results from \cite{DG}, $D$ cannot have accumulation points, and furthermore there must exist a $\delta \in R$ such that for any $a \in D$, we have $|(a-\delta, a + \delta) \cap D| > 1$ (that is, the points in $D$ cannot be ``too spread out'').

The present article presents stronger results on definable discrete sets $D$ when $\mc{R}$ has finite burden. The first set of results (in Section 2) concerns the ``difference set'' $D'$ of $D$. Assuming that $\mc{R}$ is definably complete (see Definition~\ref{def_complete} just below), for every non-maximal $a \in D$ there is a next largest element $S_D(a)$ in $D$, and we define $D' = \{S_D(a) - a \, : \, a \in D \setminus \max(D)\}$. By Fact~\ref{discrete_image} below, $D'$ is also a discrete definable set. We will show that when $\mc{R}$ has burden at most $2$, then $D'$ is finite. More generally, we can iterate the operation of taking difference sets, letting $D^{(0)} = D$ and $D^{(k+1)} = (D^{(k)})'$, and we show the following:

\hypertarget{fds}{\begin{thm}}
\label{finite_diff_set}
Suppose that $\mc{R}$ is a definably complete ordered Abelian group, $D \subseteq R$ is definable and discrete, and $D^{(n)}$ is infinite.  Then the burden of $\mc{R}$ is  at least $n+1$. If $\mc{R}$ is densely ordered, then the burden is greater than $n+1$.
\end{thm}

\begin{quest}
Suppose that $\mc{R}$ is a definably complete ordered Abelian group of finite dp-rank. Are there only finitely many Archimedean classes represented in $D'$?
\end{quest}

We conjecture that the answer to the Question above is ``yes,'' but we were not able to show this.


The second part of the article (Section 3) focuses on the fine structure of a discrete set $D \subseteq R$ definable in $\mc{R}$ under the stronger hypothesis that $\mc{R}$ has burden $2$ and is definably complete. We will show the following:

\begin{thm}
\label{def_in_G}
Suppose that $\mc{R}=\langle R; +,<, \dots \rangle$ is a densely-ordered Abelian group of burden $2$ which is definably complete and in which there is an infinite discrete subset of $R$ definable.   There is $G \subseteq R$ with $\langle R; +, <, G\rangle \equiv \langle \R; +, <, \ZZ\rangle$ so that any discrete
$D \subseteq R$ definable in $\mc{R}$  is definable in $\langle R; +, <, G\rangle$.  Furthermore if $\mc{R}$ is of dp-rank $2$  then there is such a $G$ so that any definable $X \subseteq R$ is definable in $\langle R; +, <, G\rangle$.
\end{thm}

Notice that this is very similar to a result from \cite{DG} which has an analogous conclusion in the  case when the divisible ordered Abelian group is Archimedean but under the more general hypothesis that the theory is strong.

 
 In addition to proving Theorem~\ref{def_in_G}, we obtain a fairly precise description of discrete sets definable in $\mc{R}$, showing that they are unions of finitely many points and ``pseudo-arithmetic'' sets (that is, sets $E$ such that $|E'| = 1$). Note that this has a natural analogue for discrete sets definable in strong Archimedean ordered Abelian groups, which were shown to be finite unions of points and arithmetic progressions in \cite{DG}, and in fact part of the proof strategy from that previous paper is adapted here to the non-Archimedean context.

In the remainder of this introduction we will briefly recall the definitions of the central concepts of the article (especially dp- and inp-rank, burden, and definable completeness). For a more thorough background on the various notions involved and their importance the reader is referred to the book of Simon \cite{Guide_NIP}. This article continues work begun in \cite{DG}, \cite{Goodrick_dpmin}, \cite{S}, and especially the recent note \cite{topological_properties_burden_2} in which the tame topological properties of sets definable in burden-$2$ ordered Abelian groups are studied.

\subsection{Notations and convention}

Most of our notation and terminology is standard for model theory, but for convenience we recall here some concepts which are not universally well-known, such as burden and dp-rank.

Structures will be named by calligraphic capital letters such as $\mathcal{R}$ and $\mathcal{U}$, with block letters (like $R$ and $U$) used to denote their underlying universes. For example, $\mathcal{R} = \langle R; +, <, \ldots \rangle$ denotes a structure $\mathcal{R}$ with universe $R$, a function symbol $+$, a binary relation symbol $<$, and possibly other basic predicates and functions.

We will work with \emph{ordered Abelian groups}, or Abelian groups $\langle R; + \rangle$ endowed with a strict linear ordering $<$ which is invariant under the group operation ($x < y$ implies that $x + z < y + z$). We abbreviate ``ordered Abelian group'' as OAG. Note that an OAG can be discretely ordered (having a least positive element, such as $\langle \Z; <, +\rangle$) or densely ordered (such as $\langle \R; <, + \rangle$). Recall that an Abelian group $\langle R; + \rangle$ is \emph{divisible} if for every $g \in R$ and every positive integer $n$, there is an $h \in R$ such that $nh = g$. Every divisible OAG is densely ordered, but there are densely-ordered OAGs which are not divisible.

The ordering on an OAG $\langle R; <, + \rangle$ naturally defines an order topology on $R$ whose basic open sets are open intervals, and by default any topological concepts (being open, closed, discrete, \emph{et cetera}) refers to this order topology, or to the corresponding product topology on $R^n$.

Throughout, we will use ``definable'' to mean ``definable by some first-order formula, \emph{possibly using extra parameters}.''

Whenever we talk about ``intervals'' in a densely-ordered structure, by default we mean nonempty open intervals unless specified otherwise.

\begin{definition}
\label{def_complete}
If $\mathcal{R} = \langle R; <, +, \ldots, \rangle$ is an expansion of an OAG, then $\mathcal{R}$ is \emph{definably complete} if  every nonempty definable subset $X \subseteq R$ which has an upper bound in $R$ has a least upper bound in $R$.
\end{definition}

The following observation (\cite[Proposition 2.2]{ivp}) will sometimes be useful in what follows:

\begin{fact}
\label{DC_divisible}
If $\mathcal{R}$ is an expansion of a  densely ordered OAG which is definably complete, then $\mathcal{R}$ is divisible.
\end{fact}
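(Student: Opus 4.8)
The plan is to derive divisibility directly, using definable completeness to extract exact $n$-th roots and using density of the order only to produce approximate ones. Fix $g \in R$ and a positive integer $n$; we must find $h \in R$ with $nh = g$. The case $g = 0$ is trivial and the case $g < 0$ reduces to $-g$, so I would assume $g > 0$. The candidate for $h$ is $\sup X$, where $X = \{x \in R : x \geq 0 \ \wedge \ nx \leq g\}$, the set being written with the term $nx = x + \cdots + x$. The first step is to check that $X$ meets the hypotheses of definable completeness: it is definable over the parameter $g$, it is nonempty because $0 \in X$, and it is bounded above by $g$, since if $x \geq 0$ and $x > g$ then $nx = x + (n-1)x \geq x > g$, so $x \notin X$. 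Hence $h := \sup X$ exists in $R$, and $h \geq 0$.

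The one genuinely non-routine ingredient is a density lemma: \emph{for every $\varepsilon > 0$ in $R$ there is $\delta > 0$ with $n\delta < \varepsilon$.} I would prove this using only density of the ordering, by recursively choosing $a_1, \ldots, a_n > 0$ with $a_1 + \cdots + a_n < \varepsilon$ --- at stage $k+1$ the element $\varepsilon - (a_1 + \cdots + a_k)$ is still positive, so one picks $a_{k+1}$ strictly between $0$ and it --- and then setting $\delta = \min\{a_1, \ldots, a_n\}$, so that $n\delta \leq a_1 + \cdots + a_n < \varepsilon$. This is the step I expect to be the crux of the argument: in a merely densely ordered group one cannot divide by $n$ outright (that is precisely the conclusion we want), so the lemma substitutes an approximate division, which definable completeness then upgrades to an exact root.

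With $h$ and the lemma in place, the remaining work is routine manipulation of inequalities in the OAG. If $nh < g$, set $\varepsilon = g - nh > 0$, pick $\delta > 0$ with $n\delta < \varepsilon$, and observe that $n(h + \delta) = nh + n\delta < g$ while $h + \delta \geq 0$; thus $h + \delta \in X$ even though $h + \delta > h$, contradicting that $h$ is an upper bound for $X$. If $nh > g$, set $\varepsilon = nh - g > 0$ and pick such a $\delta$ again; then every $x \in X$ satisfies $x < h - \delta$, because $x \geq h - \delta$ would force $nx \geq n(h - \delta) = nh - n\delta > g$, contradicting $x \in X$. So $h - \delta$ would be an upper bound for $X$ lying strictly below $h = \sup X$, again a contradiction. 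Hence $nh = g$; as $g$ and $n$ were arbitrary, $\mathcal{R}$ is divisible.
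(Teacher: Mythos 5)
Your proof is correct, and it is the standard argument: take $h=\sup\{x\ge 0: nx\le g\}$ (which definable completeness provides) and rule out $nh<g$ and $nh>g$ using the approximate-division lemma that density supplies. Note that the paper does not prove this statement itself but quotes it as \cite[Proposition~2.2]{ivp}; your write-up is a complete, self-contained proof along essentially the same lines as the cited one, so there is nothing to flag.
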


Finally, we recall the definitions of burden and dp-rank. These notions are originally due to Shelah \cite{strong_dep}, but the form of the definition which we give is due to Adler \cite{adler_strong_dep}. In the definitions below, we work in some fixed complete first-order theory $T$. Our theories are always $1$-sorted, since in $T^{eq}$ there will always be sorts of dp-rank greater than $1$ (assuming there are infinite models).

\begin{definition}
\label{inp}
An \emph{inp-pattern of depth $\kappa$} is a sequence $\{ \varphi_i(\overline{x}; \overline{y}) \, : \, i < \kappa \}$ of formulas, a sequence $\{k_i \, : \, i < \kappa\}$ of positive integers, and a sequence $\{\overline{a}_{i,j} \, : \, i < \kappa, j < \omega \}$ of tuples from some model $\mathcal{M} \models T$ such that:

$\bullet$ For each $i < \kappa$, the ``$i$-th row'' 
\begin{equation}
\{\varphi_i(\overline{x}; \overline{a}_{i, j}) \, : \, j < \omega \}
\end{equation}
 is $k_i$-inconsistent; and

$\bullet$ For each function $\eta \, : \, \kappa \rightarrow \omega$, the set of formulas

\begin{equation}
\{ \varphi_i(\overline{x}; \overline{a}_{i, \eta(i)}) \, : \, i < \kappa\}
\end{equation}

is consistent.

If $p(\overline{x})$ is a partial type, an inp-pattern as above is \emph{in $p(\overline{x})$} if every partial type as in (2) is consistent with $p(\overline{x})$.

The partial type $p(\overline{x})$ has \emph{burden less than $\kappa$} if there is \textbf{no} inp-pattern of depth $\kappa$ in $p(\overline{x})$. If the least $\kappa$ such that the burden of $p(\overline{x})$ is less than $\kappa$ is a successor cardinal, say $\kappa = \lambda^+$, then we say that the burden of $p(\overline{x})$ is $\lambda$.

If the burden of the partial type $x=x$ (in a single free variable $x$) exists in the theory $T$ and is equal to $\kappa$ then we say that \emph{the burden of $T$ is $\kappa$}. The theory $T$ is \emph{inp-minimal} if its burden is $1$.

The theory $T$ is \emph{strong} if every inp-pattern in $T$ has finite depth.

\end{definition}

Following Adler \cite{adler_strong_dep}, the notion of dp-rank is usually defined in terms of arrays of formulas known as \emph{ict-patterns} (or \emph{randomness patterns}, in \cite{onsh_usv}) which are similar to the inp-patterns above. However, for the arguments in the present paper, we will not need to work with ict-patterns, preferring to use inp-patterns combined with the following fact:

\begin{fact}\footnote{This fact was originally proved in unpublished notes by Adler \cite{adler_strong_dep}.}
\begin{enumerate}
\item (See \cite{Guide_NIP}, Observation 4.13) The dp-rank of a theory $T$ is less than $|T|^+$ if and only if $T$ is NIP. 
\item (See \cite{onsh_usv}, Lemma 2.11 (iv)) In case $T$ is NIP, the dp-rank of any partial type in $T$ is equal to its burden. In particular, $T$ is dp-minimal (that is, of dp-rank $1$) if, and only if, $T$ is both NIP and inp-minimal.
\end{enumerate}
\end{fact}

\section{Difference Sets for Discrete sets in strong OAGs}

In this section we provide a detailed analysis of discrete sets definable in a definably complete expansion of an ordered Abelian group whose theory is strong.  The hypothesis of definable completeness is useful for defining a ``successor function'' on definable discrete sets (see the definition of $S_D$ below). The principal result of this section is \hyperlink{fds}{Theorem~\ref{finite_diff_set}} showing that if $\mc{R}$ has burden at most $n+1$ and $D$ is a discrete set definable in $\mc{R}$, then the $n$-fold difference set $D^{(n)}$ (as defined below) must be finite. 

Note that for the case when $\mc{R}$ has burden $2$, the conclusion of Theorem~\ref{finite_diff_set} is that if $D \subseteq R$ is definable and discrete, then $D'$ is finite. In a previous paper, we derived the same conclusion under the assumption that the universe of $\mc{R}$ is Archimedean and the complete theory of $\mc{R}$ is strong (see Corollary 2.29 of \cite{DG}). In the current paper, we generally work in $\omega$-saturated models, so the results of \cite{DG} for Archimedean OAGs cannot be applied. The motivation of many of the lemmas here and in the following section was to generalize our earlier results to the non-Archimedean context, compensating with stronger assumptions on the theory (finite burden or burden $2$).


\begin{ass}
Throughout the remainder of this section \[\mc{R} = \langle R; +, <, \ldots \rangle\] denotes an expansion of an OAG which is definably complete, strong, and sufficiently saturated (generally $\omega$-saturated will be enough), and $D \subseteq R$ is a definable set which is discrete. $T$ denotes the complete first-order theory of $\mc{R}$.
\end{ass}

\begin{remark}
In this section, we do not generally assume that $\mc{R}$ is densely ordered, though  in a couple of places (notably Theorem~\ref{finite_diff_set}) we can achieve slightly stronger conclusions when $\mc{R}$ is not discrete. In case $\mc{R}$ is a discretely ordered group, a ``discrete subset of $R$'' will simply mean \emph{any} subset of $R$, and thus many of our results apply generally to \textbf{all} unary definable sets of a discretely ordered Abelian group.
\end{remark}



We first recall a couple of useful facts about families of discrete definable sets from \cite{DG}. 


\begin{fact}\label{closedd}
(\cite{DG}, Corollary 2.13) No definable discrete subset $D$ of $R$ can have an accumulation point. In particular, any definable discrete set $D \subseteq R$ is closed.  As a consequence, the union of finitely many definable discrete sets is also closed and discrete.
\end{fact}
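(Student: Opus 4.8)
The plan is to prove the first and main assertion in contrapositive form: no definable discrete set $D$ can have an accumulation point. The other two statements then follow formally — the closure of $D$ equals $D$ together with its set of accumulation points, so ``no accumulation point'' gives ``$D$ closed'', and if $U = D_1 \cup \cdots \cup D_k$ is a finite union of definable discrete sets, then $U$ is definable, and, using that each $D_j$ is closed, one checks directly that every point of $U$ is isolated in $U$, so $U$ is discrete and hence, by the main assertion, closed.

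So suppose toward a contradiction that some definable discrete $D$ has an accumulation point. Since ``$D$ has an accumulation point'' is a first-order sentence in the parameters defining $D$, it persists in a sufficiently saturated elementary extension $\C \succ \mc{R}$. After translating we may assume $0$ is an accumulation point of $D$ in $\C$, and, replacing $D$ by $-D$ if necessary, that it is an accumulation point from the right, so that $D \cap (0,h)$ is infinite for every $h \in \C$ with $h > 0$. If $\mc{R}$ is discretely ordered its order topology is discrete and there is nothing to prove; so we may assume $\mc{R}$ is densely ordered, whence $\mc{R}$ and $\C$ are divisible by Fact~\ref{DC_divisible}, and we divide by integers freely below.

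The idea is to extract from $D$ an infinite \emph{dispersed family} of the kind ruled out by Fact~\ref{discrete_families}. For positive $h,g$ put
\[
D(h,g) \;=\; \{\, d \in D \;:\; 0 < d < h \ \text{ and } \ D \cap (d-g,\, d) = \emptyset \,\}.
\]
This set is definable and discrete, and it is automatically $g$-separated: if $d' < d$ both lie in $D(h,g)$, then $d' \notin (d-g,d)$, so $d - d' \geq g$. Since $D$ is discrete, every $d \in D \cap (0,h)$ lies in $D(h,g)$ once $g$ is small enough, so $D(h,g)$ increases to the infinite set $D \cap (0,h)$ as $g \downarrow 0$; hence for each $n$ there is $g > 0$ with $\abs{D(h,g)} \geq n$, and by saturation of $\C$ there is a single $g > 0$ for which $D(h,g)$ is infinite. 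Now build scales recursively: put $h_1 = 1$; given $h_i$, choose $g_i > 0$ with $D_i := D(h_i, g_i)$ infinite, and set $h_{i+1} := g_i / 4$. Then each $D_i$ is an infinite definable discrete set, $D_i$ is $g_i$-separated, and $3 \cdot D_{i+1} \subseteq (0, 3g_i/4) \subseteq (0, g_i)$. So $\{D_i\}_{i \geq 1}$ with $\epsilon_i := g_i$ is a dispersed family in $\C$, contradicting Fact~\ref{discrete_families}, which applies since $\C$ is a model of the strong theory $T$.

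The genuinely delicate point is arranging that each $D_i$ is infinite without exhausting the room needed at the next stage: shrinking $g_i$ lets $D(h_i, g_i)$ reach further toward the accumulation point $0$ and so grow without bound, but $g_i$ also fixes the width $h_{i+1} = g_i/4$ of the window available for $D_{i+1}$. That tension is exactly what the saturation step resolves — it lets us keep $D(h_i,g_i)$ infinite while choosing $g_i$ as small as we like — and once the scales are in hand both clauses of Fact~\ref{discrete_families} are immediate from the definition of $D(h,g)$.
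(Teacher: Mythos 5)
Your argument is correct: the paper itself gives no proof of this statement (it is imported from \cite{DG}, Corollary 2.13), and your derivation — translating an accumulation point to $0$, using discreteness plus saturation to extract infinite definable $g$-separated subsets of $D$ at ever smaller scales, and feeding the resulting family into Fact~\ref{discrete_families} — is essentially the same argument as in the cited source, with the side reductions (discretely vs.\ densely ordered, divisibility via Fact~\ref{DC_divisible}) and the closure and finite-union consequences all handled correctly. I see no gaps.
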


\begin{fact}
\label{discrete_image}
(\cite{DG}, Corollary 2.17) If $D \subseteq R$ is discrete and definable and $f: R^n \rightarrow R$ is any definable function, then the image set $f[D^n]$ is also discrete.
\end{fact}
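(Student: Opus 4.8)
This is quoted from \cite{DG}, but it is instructive to indicate how one might prove it. The plan is to argue by contradiction: from a hypothetical accumulation point of $f[D^n]$ we try to manufacture either a \emph{definable} discrete set that has an accumulation point (contradicting Fact~\ref{closedd}) or a family of the kind forbidden by Fact~\ref{discrete_families}.

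Two harmless reductions come first. If $\mc{R}$ is discretely ordered then the order topology on $R$ is discrete, every subset of $R$ is clopen, and the statement is immediate; so we may assume $\mc{R}$ is densely ordered and hence, by Fact~\ref{DC_divisible}, divisible, which makes division by positive integers available. Also, since the target $f[D^n]$ is a subset of $R$ we can work with it directly and there is no need to induct on the number of variables; and we may clearly assume $D$ is infinite.

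Suppose for contradiction that $Y := f[D^n]$ is not discrete, so some $b \in Y$ fails to be isolated in $Y$, i.e.\ $b$ is an accumulation point of $Y$. After translating we take $b = 0$, and after replacing $f$ by $-f$ if necessary we may assume $0$ is approached from the right, so that $Y \cap (0,\delta)$ is infinite for every $\delta > 0$. Consider the definable set
\[
  Z = \bigl\{\, y \in Y \;:\; y > 0 \ \text{and}\ 2 y' < y \text{ for all } y' \in Y \text{ with } 0 < y' < y \,\bigr\}.
\]
Any two elements of $Z$ differ by a factor larger than $2$, so every $z \in Z$ is isolated in $Z$ (indeed $(z/2,\,2z) \cap Z = \{z\}$); thus $Z$ is a definable discrete subset of $R$. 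There are two cases. If $0$ is an accumulation point of $Z$ (equivalently, $Z \cap (0,\delta) \neq \emptyset$ for all $\delta > 0$), then $Z$ is a definable discrete set with an accumulation point, contradicting Fact~\ref{closedd}, and we are done. Otherwise there is $\delta_0 > 0$ with $Z \cap (0,\delta_0) = \emptyset$; unwinding the definition of $Z$, this says precisely that every $y \in Y \cap (0,\delta_0)$ admits some $y' \in Y$ with $y/2 \le y' < y$. Hence $Y \cap (0,\delta_0)$ is ``self-similar under halving'': from any $y_0 \in Y \cap (0,\delta_0)$ one produces a strictly decreasing sequence $y_0 > y_1 > y_2 > \cdots$ inside $Y \cap (0,\delta_0)$ with $y_k/2 \le y_{k+1} < y_k$. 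One then aims to organize these ever-finer scales, uniformly and definably, into a family $\{D_i : i \in \omega\}$ of infinite definable discrete sets together with positive elements $\epsilon_i$ satisfying conditions (1)--(2) of Fact~\ref{discrete_families}, contradicting that $T$ is strong.

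I expect the real content of the argument to lie in this last step. It is a genuinely non-Archimedean issue: in a non-Archimedean $\mc{R}$ the set $Y \cap (0,\delta_0)$ may contain scales of arbitrarily small Archimedean class with no least one, and the ``self-similarity under halving'' must then be converted into an honest inp-pattern with $\omega$ many rows, in the same spirit as the proof of Fact~\ref{discrete_families}. Exhibiting the auxiliary discrete sets and the gaps $\epsilon_i$ as genuinely definable objects and checking the $k_i$-inconsistency of each row is precisely where the hypothesis that $T$ is strong is used, and is the step I would expect to require the most care.
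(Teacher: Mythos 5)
The present paper only quotes this statement from \cite{DG} (Corollary 2.17) and gives no proof, so the question is whether your sketch would actually constitute one; as written it would not. Your preliminary reductions and the set $Z$ are fine: $Z$ is definable, any two of its elements differ by a factor greater than $2$, so $Z$ is discrete, and if $0$ were an accumulation point of $Z$ you would contradict Fact~\ref{closedd}. The gap is in the second case, and it is not merely an unfinished computation. At that point the only information you retain about $Y=f[D^n]$ is that it is a definable subset of $R$ accumulating at $0$ from the right in which every $y\in Y\cap(0,\delta_0)$ has another point of $Y$ in $[y/2,y)$. That configuration cannot yield a contradiction with strength: an interval $(0,\delta_0)$ in a divisible OAG has exactly this ``halving'' property, is definable, and accumulates at $0$, yet exists in o-minimal (hence strong, definably complete) OAGs. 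So the hypotheses carrying all the content --- that $Y$ is the image of a \emph{discrete} definable set under a definable map --- have been discarded exactly where you acknowledge ``the real content of the argument'' lies, and no family as in Fact~\ref{discrete_families} and no inp-pattern can be manufactured from what remains.

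Any correct argument must return to $D$ and $f$ in the second case: for example, one needs some way of definably selecting preimages in $D^n$ of the points of $Y$ clustering at the accumulation point (using that $D$ is closed and discrete, so fibers admit definable choice in the definably complete setting), and then to convert the infinitely many distinct scales appearing in the image into definable discrete data about $D$ itself, to which Fact~\ref{discrete_families} or Fact~\ref{closedd} can be applied. None of that appears in your sketch; what you have proved is only the dichotomy ``either $Z$ accumulates at $0$ (contradiction) or $Y$ is multiplicatively self-similar near $0$,'' and the second horn is not by itself in tension with strength. Incidentally, this also casts doubt on your remark that no induction on the number of variables is needed: once one works with preimages, the product structure of $D^n$ has to be confronted, which is plausibly why the source states the result for $f:R^n\to R$ rather than just $n=1$.
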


Next we set some basic definitions which will be used throughout the remainder of the paper: the successor function $S_D$, $\ZZ$-chains in a discrete set, and Archimedean equivalence.

\begin{definition}  Let $D$ be a discrete set definable in $\mc{R}$.
\begin{enumerate}
\item If $a \in D$ is not maximal, set $S_D(a)=\min\{b \in D : a <b\}$.\footnote{Note that this minimum element always exists by definable completeness.}  For $n \in \NN^{>0}$ we let $S_D^{n}$ be the $n$-th iterate of $S_D$ (when defined).  Similarly if $a$ is not minimal in $D$ we let $S_D^{-1}(a)=\max\{b \in D : b<d\}$ and for $n \in \NN^{>0}$ we let $S_D^{-n}$ be the $n$-th iterate (when defined).  Finally let $S_D^{0}(a)=a$ for any $a \in D$.
\item If $a$ is not maximal in $D$ we let $\gamma_D(a)=S_D(a)-a$, which we call the {\em difference} at $a$.
\item $D'=\{\gamma_D(a) : a \in D  \text{ is not maximal}\}$, the {\em difference set} of $D$.
\item Note that $D'$ is the image of $D$ under the definable map $a \mapsto \gamma_D(a)$, so by Fact~\ref{discrete_image}, the set $D'$ is also discrete. Hence we may define the \emph{$n$-th difference set} $D^{(n)}$ by recursion on $n \in \N$: $D^{(0)} = D$, and $D^{(n+1)} = (D^{(n)})'$.
\end{enumerate}

\end{definition}

\begin{definition}\label{zchain}  Let $D$ be a discrete definable set in $\mc{R}$ and let $a \in D$.  The {\em $\ZZ$-chain} of $a$ is the set:
\[\mathcal{Z}(a)=\{S_D^{n}(a) : n \in \ZZ \text{ and } S_D^{n}(a) \text{ exists}\}.\]  Any subset of $D$ of the form $\mc{Z}(a)$ for some $a \in D$ will be referred to as a {\em $\ZZ$-chain of $D$}.  We also write:
 \[\mc{Z}_{\geq}(a)=\{S_D^n(a): n \geq 0 \text{ and } S_D^n(a) \text{ exists}\}\]
and \[\mc{Z}_{\leq}(a)=\{S_D^n(a): n \leq 0 \text{ and } S_D^n(a) \text{ exists}\}.\]
Any subset of $D$ of the form $\mc{Z}_{\geq}(a)$ is called an \emph{$\omega$-chain of $D$}, and any subset of the form $\mc{Z}_{\leq}(a)$ is called an \emph{$\omega^*$-chain of $D$}.

\end{definition}

\begin{definition}  For $a \in \mc{R}^{>0}$ the {\em Archimedean class} of $a$ is the set:  
\[ [a]=\{b \in R :  b < na \text{ and } a<nb \text{ for some } n \in \NN\}.\]  Note that the collection of Archimedean classes partitions $\mc{R}^{>0}$.  We write $a \ll b$ if $[a]<[b]$ and for sets $X,Y \subseteq \mc{R}^{>0}$ we write $X \ll Y$ if for all $x \in X$ and $y \in Y$ we have $x \ll y$.
Two elements $a$ and $b$ of $R$ are called \emph{Archimedean equivalent} if they belong to the same Archimedean class.
\end{definition}

We record the following fact about definable families of discrete sets which we believe is interesting, even though it will not be necessary in proving our main results.

\begin{prop}\label{union}  Suppose that $D \subseteq R$ is discrete and definable and that $D_a$ is a uniformly definable family of discrete sets for $a \in D$.  Then $\bigcup_{a \in D}D_a$ is discrete.
\end{prop}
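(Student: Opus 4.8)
The plan is to argue by contradiction and reduce everything to two earlier facts: that definable discrete sets are closed with no accumulation point (Fact~\ref{closedd}) and that definable images of discrete sets are discrete (Fact~\ref{discrete_image}); no new inp-pattern needs to be constructed directly. Write $E=\bigcup_{a\in D}D_a$, which is definable because the family is uniformly definable and $D$ is definable, so it suffices to show every point of $E$ is isolated in $E$. Suppose toward a contradiction that $E$ is not discrete, so that some $e\in E$ is not isolated: $(e-\epsilon,e+\epsilon)\cap E\neq\{e\}$ for every $\epsilon>0$.

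The first step is to localize the family at $e$. Let $D^{*}=\{a\in D : D_a\not\subseteq\{e\}\}$, a definable subset of $D$ and hence discrete, and for $a\in D^{*}$ put
\[ h(a)=\sup\{\delta\in\mc{R}^{>0} : (e-\delta,e+\delta)\cap D_a\subseteq\{e\}\}. \]
Since each $D_a$ is discrete and therefore closed with no accumulation point (Fact~\ref{closedd}), the element $e$ is either isolated in $D_a$ or lies outside $D_a$; in both cases the set whose supremum is taken is a nonempty subset of $\mc{R}^{>0}$, and for $a\in D^{*}$ it is bounded above by the distance from $e$ to the nearest point of $D_a\setminus\{e\}$. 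By definable completeness $h(a)$ is thus a well-defined element of $\mc{R}^{>0}$, and $h$ is a definable function on the discrete set $D^{*}$.

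The contradiction then comes as follows. On one hand, since $e$ is not isolated in $E$, for every $\epsilon>0$ some $D_a$ meets $(e-\epsilon,e+\epsilon)\setminus\{e\}$; such an $a$ lies in $D^{*}$ and satisfies $h(a)<\epsilon$, so $\inf h[D^{*}]=0$, an infimum not attained because $h$ is strictly positive. On the other hand, extending $h$ to a total definable function and invoking Fact~\ref{discrete_image}, the image $h[D^{*}]$ is a definable discrete subset of $\mc{R}^{>0}$ — but an unattained infimum of $0$ exhibits $0$ as an accumulation point of $h[D^{*}]$, contradicting Fact~\ref{closedd}. I expect the only points needing care to be bookkeeping ones: checking that $h$ is genuinely definable and $\mc{R}^{>0}$-valued on $D^{*}$ (this is exactly where definable completeness and the closedness of each $D_a$ enter) and that a definable subset of the discrete set $D$ is again discrete so that Fact~\ref{discrete_image} applies; both are routine.
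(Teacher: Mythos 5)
Your proof is correct: the function $h$ is well defined and positive on $D^*$ by definable completeness together with the closedness/isolation provided by Fact~\ref{closedd}, its image is discrete by Fact~\ref{discrete_image} (applied to $D^*$, which is a definable subset of $D$ and hence discrete, after extending $h$ arbitrarily to a total definable function), and an unattained infimum of $0$ does exhibit $0$ as an accumulation point of the definable discrete set $h[D^*]$, which is the desired contradiction. The route differs from the paper's in an interesting way. The paper first manufactures a \emph{uniform} isolation radius $\vareps^*$ for the whole family, via two nested steps: for each $a$ a definable function $f_a$ on $D_a$ records pointwise isolation radii, then a function $g$ on $D$ records a positive lower bound for each $f_a[D_a]$, and Facts~\ref{closedd} and \ref{discrete_image} give $\vareps^*>0$ with $2\vareps^* < g[D]$. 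Only then does it look at an accumulation point $b$ of the union: since each $D_a$ meets $(b-\vareps^*,b+\vareps^*)$ in at most one point, a definable selector $h:D\to R$ picking that point is available, and its image accumulates at $b$, contradicting Fact~\ref{closedd}. You dispense with the uniformization entirely by working at the bad point $e$ from the start and replacing the selector (which maps $a$ to a nearby \emph{point} of $D_a$) with a scalar measure (the isolation radius of $D_a$ around $e$), relocating the accumulation to $0$ instead of $b$. Both arguments consume exactly the same ingredients -- definable completeness for the suprema, Fact~\ref{discrete_image}, Fact~\ref{closedd} -- but yours needs one definable function where the paper uses three, at the cost of not producing the uniform separation $\vareps^*$ (which the paper does not reuse anyway). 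The only wording to tighten is the remark about the ``nearest point'' of $D_a\setminus\{e\}$: you do not need a nearest point to exist, since any single element $c\in D_a\setminus\{e\}$ already bounds the set defining $h(a)$ above by $|c-e|$.
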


\begin{proof}  We may assume that $\mathcal{R}$ is densely ordered as otherwise the statement is trivial. 

For each $a \in D$ and $b\in D_a$ there is $\vareps>0$ so that $(b-\vareps, b+\vareps) \cap D_a =\{b\}$.  For $b \in D_a$ let $f_a(b)=\sup\{\vareps>0 : 
(b-\vareps, b+\vareps) \cap D_a = \{b\}\}$ (which exists in $R$ by definable completeness).  This yields a definable function $f_a: D_a \to R$. Observe that $f_a[D_a]$ is discrete (by Fact~\ref{discrete_image}), hence $f_a[D_a]$ is closed (Fact~\ref{closedd}), and $f_a[D_a]>0$.  Hence for every $a \in D$ there is $\vareps>0$ so that $\vareps < f_a[D_a]$.  Define $g: D \to R$ by the rule $$g(a)=\sup\{\vareps : \vareps<f_a[D_a]\}$$ (so by definable completeness, $g(a) = \min ( f_a[D_a])$).  This is a definable function, so $g[D]$ is discrete (by Fact~\ref{discrete_image}) and $g[D]>0$.  Pick $\vareps^*>0$ so that $2\vareps^*<g[D]$.  Thus for any $a \in D$ and any $b \in D_a$ we have that $(b-2\vareps^*, b+2\vareps^*) \cap D_a=\{b\}$.

Now suppose  that $b$ is an accumulation point of $\bigcup_{a\in D}D_a$.  By our choice of $\vareps^*$, for any fixed $a \in D$, there cannot be more than one point of $D_a$ in an open interval of length $2 \vareps^*$, so in particular there is at most one point of $D_a$ in $(b - \vareps^*, b + \vareps^*)$. Thus we can define a function 
$h: D \to R$ such that $h(a)$ is the unique element of $D_a \cap (b-\vareps^*,b+\vareps^*)$, if this exists, and otherwise $h(a) = b$.  Since $h$ is a definable function, its image $h[D]$ is discrete by Fact~\ref{discrete_image} again.  But $b$ must be an accumulation point of $h[D]$, contradicting Fact~\ref{closedd}.  Therefore $\bigcup_{a\in D}D_a$ is discrete.
\end{proof} 

Our next goal is to prove Theorem~\ref{descending_prime} below, which says that the existence of definable discrete sets $D_0, \ldots, D_n$ with certain properties implies the existence of an inp-pattern of depth $n+1$; our main result, Theorem~\ref{finite_diff_set}, will follow rather quickly from this. Before proving Theorem~\ref{descending_prime}, we will need the next definition below and a pair of technical lemmas (Lemmas~\ref{sums} and \ref{prime_reduction}) which will be useful in the eventual construction of an inp-pattern.

\begin{definition}
If $D_0, D_1, \ldots, D_n$ are subsets of $R$, then $$D_0 + D_1 + \ldots + D_n = \{c_0 + c_1 + \ldots + c_n \, : \, \forall i \leq n \, \left[ c_i \in D_i \right] \}.$$
\end{definition}

\begin{lem} \label{discrete_finite_sums}
If $D_0, \ldots, D_n$ are definable discrete sets, then $D_0 + \ldots + D_n$ is discrete.
\end{lem}

\begin{proof}
This Lemma could be quickly deduced from Proposition~\ref{union} above, but it can also be proved directly, as follows. Let $D = D_0 \cup \ldots \cup D_n$, which is discrete as it is a finite union of discrete sets. Fix some arbitrary point $b \in D_0 + \ldots + D_n$ and define $f \, : \, D^{n+1} \rightarrow R$ by the rule

\begin{equation*}
    f(c_0, \ldots, c_n) =
    \begin{cases*}
      c_0 + \ldots + c_n, & if $c_i \in D_i$ for all $i$; \\
      b        & otherwise.
    \end{cases*}
  \end{equation*}
  
  Then $D_0 + \ldots + D_n$ is the image of $D$ under $f$, and hence by Fact~\ref{discrete_image} it is discrete.
\end{proof}

\begin{lem} \label{sums} Suppose that $\widetilde{D}_0, \ldots, \widetilde{D}_n$ are definable infinite discrete sets such that:

\begin{enumerate}
\item for every $i \in \{0, \ldots, n\}$,  the set $\widetilde{D}_i$ is is bounded above, and $0 < \widetilde{D}_i$; and
\item  for every $i$ such that $i \in \{0, \ldots n-1\}$, $$\max(\widetilde{D}_{i+1}) + \min(\widetilde{D}'_{i+1}) < \min(\widetilde{D}'_i).$$
\end{enumerate}

Then the ordering on $\widetilde{D}_0 + \ldots + \widetilde{D}_n$ as a subset of $R$ is isomorphic to the lexicographic ordering on $\widetilde{D}_0 \times \ldots \times \widetilde{D}_n$ under the natural map sending the tuple $(c_0, \ldots, c_n) \in \widetilde{D}_0 \times \ldots \times\widetilde{D}_n$ to the sum $c_0 + \ldots + c_n.$

In particular, if $c_n$ is a non-maximal element of $\widetilde{D}_n$, then

$$S_{\widetilde{D}_0 + \ldots + \widetilde{D}_n}(c_0 + \ldots + c_{n-1} + c_n) = c_0 + \ldots + c_{n-1} + S_{\widetilde{D}_n}(c_n).$$

\end{lem}

\begin{proof}

First, we establish the following:

\begin{claim} \label{gamma_sum}
If $0 \leq i  < n$, $c_i \in \widetilde{D}_i$, and $c_i \neq \max(\widetilde{D}_i)$, then for any tuple $(c_{i+1}, c_{i+2}, \ldots, c_n) \in \widetilde{D}_{i+1} \times \widetilde{D}_{i+2} \times \ldots \times \widetilde{D}_n$, $$\gamma_{\widetilde{D}_i}(c_i) > c_{i+1} + \ldots + c_n.$$
\end{claim}

\emph{Proof of Claim \ref{gamma_sum}:} Fix some $i \in \{0, \ldots, n-1\}$ and fix some $(c_i, \ldots, c_n) \in \widetilde{D}_i \times \ldots \times \widetilde{D}_n$ such that $c_i \neq \max(\widetilde{D}_i)$. Then

\begin{align*}
\gamma_{\widetilde{D}_i}(c_i)  \geq \min (\widetilde{D}_i') &>  \max(\widetilde{D}_{i+1}) + \min(\widetilde{D}'_{i+1}), &\, \, \,  \textup{by hypothesis (2)} \\
 & \geq   c_{i+1} + \min(\widetilde{D}'_{i+1}) & \\
 & > c_{i+1} + \max(\widetilde{D}_{i+2}) + \min(\widetilde{D}'_{i+2}), & \, \, \, \textup{by hypothesis (2)}\\
 & \geq c_{i+1} + c_{i+2} + \min(\widetilde{D}'_{i+2}) & \\
 & \vdots & \\
 & \geq c_{i+1} + \ldots + c_{n-1} + \min(\widetilde{D}'_{n-1}) & \\
 & > c_{i+1} + \ldots + c_{n-1} + \max(\widetilde{D}_n) + \min(\widetilde{D}'_n) & \, \, \, \textup{by hypothesis (2)}\\
 &> c_{i+1} +  \ldots + c_{n-1} + \max(\widetilde{D}_n)\\
 &\geq c_{i+1} + \ldots + c_{n-1} + c_n,
\end{align*}

proving the Claim.

\medskip

To prove Lemma~\ref{sums}, consider any two tuples $(c_0, \ldots, c_n)$ and $(d_0, \ldots, d_n)$ from $\widetilde{D}_0 \times \ldots \times \widetilde{D}_n$, and we must show that $$c_0 + \ldots + c_n < d_0 + \ldots + d_n$$ if and only if the tuple $(c_0, \ldots, c_n)$ comes before $(d_0, \ldots, d_n)$ in the lexicographic order. Without loss of generality, the tuples are distinct and $(c_0, \ldots, c_n)$ comes before $(d_0, \ldots, d_n)$ in the lexicographic order. Let $i \in \{0, \ldots, n\}$ be minimal such that $c_i \neq d_i$, so that $c_i < d_i$. We can further assume that $i < n$, since if $i = n$ the conclusion we want is trivial. Note that

\begin{equation}
\label{D_i_successor}
d_i \geq c_i + \gamma_{\widetilde{D}_i}(c_i).
\end{equation}

Hence

\begin{align*}
\sum_{j=0}^n (d_j - c_j) &= d_i - c_i + \sum_{j=i+1}^n (d_j - c_j),& \textup{ by minimality of } i\\
&\geq \gamma_{\widetilde{D}_i}(c_i) + \sum_{j=i+1}^n (d_j - c_j),& \textup{ by (\ref{D_i_successor})}\\
&> \sum_{j=i+1}^n d_j,& \textup{ by  Claim  \ref{gamma_sum}}\\
&> 0,& \textup{ by Hypothesis (1).}
\end{align*}

Therefore $\sum_{j=0}^n c_j < \sum_{j=0}^n d_j$, as we wanted.

\end{proof}

\begin{lem}
\label{prime_reduction} If there are definable infinite discrete sets $D_0, \ldots, D_n$ such that

(a) $0 < D_0$, and

(b) for every $i$ such that $0 \leq i < n$, $$0 < D_{i+1} < D'_i,$$

then there are definable infinite discrete sets $\widetilde{D}_0, \ldots, \widetilde{D}_n$ satisfying hypotheses (1) and (2) of Lemma~\ref{sums}.
\end{lem}

\begin{proof}
Note that hypothesis (b) implies that $D_1, \ldots, D_n$ are all bounded above. In case $D_0$ is not bounded above, we may (using $\omega$-saturation) replace it with an infinite subset which is bounded above, which will not affect hypothesis (b); so without loss of generality, every $D_i$ is bounded above.

Now for each $i \in \{1, \ldots, n\}$, let $d_i$ be the largest element of $D_i$, let $c_i$ be the second largest element of $D_i$, and let $b_i$ be the third largest element of $D_i$, and we define

 \[ \widetilde{D}_i :=\begin{cases} (D_i \setminus \{c_i, d_i\}) \cup \{ d_i - c_i + b_i \}, & \text{ if } b_i < 2 c_i - d_i; \\ D_i \setminus \{d_i\}, & \text{ if } b_i \geq 2 c_i - d_i .\end{cases}\] 

Finally, we let $$\widetilde{D}_0 := D_0.$$

Note that when $b_i < 2 c_i - d_i$, we have $$d_i - c_i + b_i = c_i - (2 c_i - d_i - b_i) < c_i,$$ while if $b_i \geq 2 c_i - d_i$, then $\max(\widetilde{D}_i) = c_i$; thus in either case, for any $i \in \{1, \ldots, n\}$,

\begin{equation}
\max(\widetilde{D}_i) \leq c_i.
\end{equation}

Also, since $b_i \in D_i > 0$ and $c_i < d_i$, it follows that $d_i - c_i + b_i >0$, and so in either case of the definition of $\widetilde{D}_i$, we have

\begin{equation}
\widetilde{D}_i > 0.
\end{equation}

Next we will verify that for any $i \in \{0, \ldots, n\}$,

\begin{equation}
\widetilde{D}_i' \subseteq D'_i.
\end{equation}

The relation (6) is trivial when $i = 0$ and immediate from the definition when $b_i \geq 2 c_i - d_i$, so say $i > 0$ and $b_i < 2 c_i - d_i$. Since $$d_i - c_i + b_i = b_i + (d_i-c_i) > b_i,$$ it follows that $d_i - c_i + b_i$ is the largest element of $\widetilde{D}_i$ and $b_i$ is the second largest element of $\widetilde{D}_i$. Hence

\begin{align*}
\widetilde{D}_i' &\subseteq D_i'  \cup \{d_i - c_i + b_i - b_i \}\\
&= D_i' \cup \{d_i - c_i \}\\
&= D'_i,
\end{align*}

as desired.

Now from (6) we immediately deduce that for any $i \in \{0, \ldots, n\}$,

\begin{equation}
\min(D'_i) \leq \min(\widetilde{D}'_i).
\end{equation}

Finally, we will show that for any $i \in \{0, \ldots, n-1\}$,

\begin{equation}
\min(\widetilde{D}_{i+1}') \leq d_{i+1} - c_{i+1}.
\end{equation}

To prove (8), we divide into two cases depending on the definition of $\widetilde{D}_{i+1}$. On the one hand, if $b_{i+1} < 2 c_{i+1} - d_{i+1}$, then, as noted above, $b_{i+1}$ and $d_{i+1} - c_{i+1} + b_{i+1}$  are the two largest elements of $\widetilde{D}_{i+1}$, so that

\begin{align*}
\gamma_{\widetilde{D}_{i+1}}(b_{i+1}) &= S_{\widetilde{D}_{i+1}}(b_{i+1}) - b_{i+1}\\
&= d_{i+1} - c_{i+1} + b_{i+1} - b_{i+1}\\
&= d_{i+1} - c_{i+1},
\end{align*}

so that

$$\min(\widetilde{D}_{i+1}') \leq \gamma_{\widetilde{D}_{i+1}}(b_{i+1}) = d_{i+1} - c_{i+1},$$

as desired.

On the other hand, suppose that $i \in \{0, \ldots, n-1\}$ and $b_{i+1} \geq 2 c_{i+1} - d_{i+1}$. Then $$\widetilde{D}_{i+1} = D_{i+1} \setminus \{d_{i+1}\},$$ so in particular $c_{i+1} - b_{i+1} \in \widetilde{D}_{i+1}'$, and

\begin{align*}
\min(\widetilde{D}_{i+1}') &\leq c_{i+1} - b_{i+1}\\
 &\leq c_{i+1} - (2 c_{i+1} - d_{i+1})\\
 &= d_{i+1} - c_{i+1},
 \end{align*}
 
 and again we have the desired conclusion (8).

 We will now check that hypotheses (1) and (2) of Lemma~\ref{sums} hold for $\widetilde{D}_i$. First, hypothesis (1) of Lemma~\ref{sums} simply says that $\widetilde{D}_i$ is bounded above and $0 < \widetilde{D}_i$ for each $i \in \{0, \ldots, n\}$, which is (5). As for hypothesis (2) of that Lemma, suppose $i \in \{0, \ldots, n-1\}$. 
Hence


\begin{align*}
\max(\widetilde{D}_{i+1}) + \min(\widetilde{D}_{i+1}') &\leq c_{i+1} + \min(\widetilde{D}_{i+1}'), \, \, \, \, \textup{by (4)}\\
&\leq c_{i+1} + (d_{i+1} - c_{i+1}), \, \, \, \, \textup{by (8)}\\
&= d_{i+1}\\
&= \max(D_{i+1})\\
& < \min(D'_i), \, \, \, \, \textup{by hypothesis (b) of Lemma~\ref{prime_reduction}}\\
&\leq \min(\widetilde{D}'_i), \, \, \, \, \textup{by (7)}.
\end{align*}

This shows that both hypotheses of Lemma~\ref{sums} hold with the $\widetilde{D}_i$ in place of $D_i$, as desired.

\end{proof}

\begin{thm} \label{descending_prime} If there are definable infinite discrete sets $D_0, \ldots, D_n$ such that 

(a) $0 < D_0$, and

(b) for every $i$ such that $0 \leq i < n$, $$0 < D_{i+1} < D'_i,$$

then the burden of $\mathcal{R}$ is at least $n+1$. If we additionally assume that  $\mc{R}$ is densely ordered, then its burden is at least $n+2$.
\end{thm}

\begin{proof}
First, by Lemma~\ref{prime_reduction}, there are infinite definable discrete sets $\widetilde{D}_0, \ldots, \widetilde{D}_n$ satisfying the conclusions of Lemma~\ref{sums}. We will use the sets $\widetilde{D}_0, \ldots, \widetilde{D}_n$ to construct an inp-pattern of depth $n+1$, and at the last step explain how the construction can be extended to add an extra row to the pattern in case $\mc{R}$ is densely ordered. 

First we will describe the formulas $\varphi_i(x; \overline{y}_i)$ of Row $i$ for each $i \in \{0, \ldots, n+1\}$, and afterwards we will explain how to select parameters $\overline{a}_{i,j}$ so that the $\varphi_i(x;\overline{a}_{i,j})$ form an inp-pattern.

\medskip

\underline{Row $0$:} The formula $\varphi_0(x; \overline{y})$ for the first row (Row $0$) has variables $x$ and $\overline{y} = (y_0, y_1)$ and is

$$\varphi_0(x; \overline{y}) = y_0 < x < y_1,$$

asserting that $x$ lies in a certain open interval.

\medskip

\underline{Rows $1$ through $n+1$:} If $1 \leq i \leq n$, the formula $\varphi_i(x; \overline{y})$ has variables $x$ and $\overline{y} = (y_0, y_1)$ in addition to the parameters used to define the sets $\widetilde{D}_0, \ldots, \widetilde{D}_n$ (which we hold constant and never state explicitly). The formula for Row $i$ is

$$\varphi_i(x; \overline{y}) = y_0 < \min \left\{ x - z \, : \, z \in \widetilde{D}_0 + \ldots + \widetilde{D}_{i-1} \textup{ and } z < x \right\} < y_1,$$

observing that the minimum above exists for any value of $x$ (by definable completeness) and is definable. Also note that this formula makes sense for $i$ up to and including $n+1$, although we will only use $\varphi_{n+1}(x;\overline{y})$ in our inp-pattern in the case when $\mc{R}$ is densely ordered. 

\medskip

\underline{Selecting the parameters:} Finally, we will explain how to select parameters $\overline{a}_{i,j}$ for the formulas $\varphi_i(x;\overline{y})$ so that $$\{\varphi_i(x; \overline{a}_{i,j}) \, : \, i < n+1, j < \omega\}$$ forms an inp-pattern (and in case $\mc{R}$ is densely ordered, we include $i = n+1$). We will assume without loss of generality that $\mc{R}$ is $\omega$-saturated and we will select all $\overline{a}_{i,j}$ from $R$.

\medskip

First we will pick the parameters for Rows $0$ through $n$, and at the end we will explain how to pick parameters for Row $n+1$ in case $\mathcal{R}$ is densely ordered.

Recall that for Rows $0$ through $n$ to form an inp-pattern, it suffices to pick the parameters $\overline{a}_{i,j}$ such that:

\begin{enumerate}
\item for every $i \in \{0, \ldots, n\}$, the formulas $\{\varphi_i(x; \overline{a}_{i,j}) \, : \, j < \omega\}$ of Row $i$ are $2$-inconsistent; and
\item for every function $\eta \, : \, \{0, \ldots, n\} \rightarrow \omega$, the formula

$$\varphi_0(x; \overline{a}_{0, \eta(0)}) \wedge \ldots \wedge \varphi_n(x; \overline{a}_{n, \eta(n)})$$

is consistent.

\end{enumerate}

Given that each set $\widetilde{D}_i$ is infinite and discrete, we can pick pairs $\overline{a}_{i,j} = (a^0_{i,j}, a^1_{i,j})$ such that the open intervals $I(a^0_{i,j}, a^1_{i,j})$ which they define are pairwise disjoint, and each such interval contains two consecutive elements of $\widetilde{D}_i$. For definiteness, we fix elements $c_{i,j} \in \widetilde{D}_i$ for each $i \in \{0, \ldots, n\}$ and each $j < \omega$ such that

\begin{equation}
\label{c_choice}
a^0_{i,j} < c_{i,j} < S_{\widetilde{D}_i}(c_{i,j}) < a^1_{i,j}.
\end{equation}

From the definition of the formulas $\varphi_i(x; \overline{y})$ and the fact that the intervals $I(a^0_{i,j}, a^1_{i,j})$ are pairwise disjoint condition (1) above is immediate.

To show (2), fix some $\eta \, : \, \{0, \ldots, n\} \rightarrow \omega$, and we will show that $$e := c_{0, \eta(0)} + \ldots + c_{n, \eta(n)}$$ satisfies the formula  $\varphi_i(x; \overline{a}_{i, \eta(i)})$ for each $i \in \{0, \ldots, n\}$.

In case $i = 0$, note that by the conclusion of Claim~\ref{gamma_sum}, $$\gamma_{\widetilde{D}_0}(c_{0, \eta(0)}) > c_{1, \eta(1)} + \ldots + c_{n, \eta(n)}$$ so that $$c_{0, \eta(0)} < c_{0, \eta(0)} + \ldots + c_{n, \eta(n)} < S_{\widetilde{D}_0}(c_{0, \eta(0)}).$$ By the inequality (\ref{c_choice}) and the definition of $e$, the element $e$ is in the interval $I(a^0_{0, \eta(0)}, a^1_{0, \eta(0)})$ defined by $\varphi_0(x; \overline{a}_{0, \eta(0)}),$ as we wanted.

If $i \in \{1, \ldots, n\}$, we argue similarly: by the conclusion of Claim~\ref{gamma_sum} again,

$$\sum_{k=0}^{i-1} c_{k, \eta(k)} < e < \sum_{k=0}^{i-2} c_{k, \eta(k)} + S_{\widetilde{D}_{i-1}}(c_{i-1, \eta(i-1)}),$$ and thus, by Lemma~\ref{sums}, the greatest element of $\widetilde{D}_0 + \ldots + \widetilde{D}_{i-1}$ below $e$ is $\sum_{k=0}^{i-1} c_{k, \eta(k)}$; therefore,

\begin{equation}
\label{e-z}
\min \left\{ e - z \, : \, z \in \widetilde{D}_0 + \ldots + \widetilde{D}_{i-1} \textup{ and } z < e \right\} = c_{i, \eta(i)} + \ldots + c_{n, \eta(n)}.
\end{equation}

If $i = n$, then this reduces to $$\min \left\{ e - z \, : \, z \in \widetilde{D}_0 + \ldots + \widetilde{D}_{n-1} \textup{ and } z < e \right\} =  c_{n, \eta(n)},$$

and by (\ref{c_choice}) we have $$a^0_{n, \eta(n)} < \min \left\{ e - z \, : \, z \in \widetilde{D}_0 + \ldots + \widetilde{D}_{n-1} \textup{ and } z < e \right\} < a^1_{n, \eta(n)}$$ so that $e$ satisfies $\varphi_n(x; \overline{a}_{n, \eta(n)})$ as desired.

If $i \in \{1, \ldots, n-1\}$, then once again by Claim~\ref{gamma_sum}, $$\gamma_{\widetilde{D}_i}(c_{i, \eta(i)}) > c_{i+1, \eta(i+1)} + \ldots + c_{n, \eta(n)}$$ so that $$c_{i, \eta(i)} < c_{i, \eta(i)} + c_{i+1, \eta(i+1)} + \ldots + c_{n, \eta(n)} < S_{\widetilde{D}_i}(c_{i, \eta(i)}).$$ By Equation (\ref{e-z}), this implies that

$$c_{i, \eta(i)} <  \min \left\{ e - z \, : \, z \in \widetilde{D}_0 + \ldots + \widetilde{D}_{i-1} \textup{ and } z < e \right\} < S_{\widetilde{D}_i}(c_{i, \eta(i)}).$$

By the inequalities in (\ref{c_choice}), we again deduce that $e$ satisfies $\varphi_i(x; \overline{a}_{i, \eta(i)})$.

Finally, suppose that $\mc{R}$ is densely ordered. Then by $\omega$-saturation, we can pick parameters $\overline{a}_{n+1,j} = (a^0_{n+1,j}, a^1_{n+1,j})$ from $R$ such that for every $j \in \omega$,

$$0 < a^0_{n+1,j} < a^1_{n+1,j} < a^0_{n+1,j+1} < a^1_{n+1,j+1} < \min (\widetilde{D}'_n).$$

By density of $\mathcal{R}$, we can pick elements $c_{n+1, j} \in I(a^0_{n+1,j}, a^1_{n+1,j})$. Now if $\eta \, : \, \{0, \ldots, n+1\} \rightarrow \omega$ is any function, let $$e = c_{0, \eta(0)} + \ldots + c_{n+1, \eta(n+1)}.$$ Arguing just as before, we have that $e$ satisfies $\varphi_i(x; \overline{a}_{i, \eta(i)})$ for every $i \in \{0, \ldots, n+1\}$, and we are done.

\end{proof}

At last, we can prove Theorem~\ref{finite_diff_set}.

\begin{proof} \emph{(of Theorem~\ref{finite_diff_set})} Let $D$ be an infinte definable discrete set such that $D^{(n)}$ is infinite, and we must show that the burden of $\mc{R}$ is at least $n+1$ (or at least $n+2$, in case $\mc{R}$ is densely ordered). By passing to an elementary extension as needed, we assume $\mc{R}$ is $\omega$-saturated.

First we note:

\begin{claim}
There is a definable discrete set $\widetilde{D} \subseteq R$ such that $0 < \widetilde{D}$ and $\widetilde{D}^{(n)}$ is infinite.
\end{claim}

\begin{proof}
If $n = 0$, there is some $a$ such that $(D + a) \cap (0, \infty)$ is infinite\footnote{Note that the existence of such an $a$ is guaranteed by $\omega$-saturation.}, and we can take $\widetilde{D} = (D + a) \cap (0, \infty)$. If $n \geq 1$, first take some bounded interval $(b,c)$ such that $(D \cap (b,c))^{(n)}$ is infinite; then, noting that for any $a \in R$, $$(a + D \cap (b,c))^{(n)} = (D \cap (b,c))^{(n)},$$ we may let $\widetilde{D}$ be a translation of $(D \cap (b,c))^{(n)}$ such that $\widetilde{D} > 0$.
\end{proof}

To prove Theorem~\ref{finite_diff_set}, by Theorem~\ref{descending_prime}, it is sufficient to construct a new sequence of infinite discrete sets $E_0, \dots, E_n$ such that

\begin{enumerate}
\item $0 < E_0$, and
\item for every $i \in \{0, \ldots, n-1\}$, $$0 < E_{i+1} < E'_i.$$
\end{enumerate}

To this end, we will build an $(n+1) \times (n+1)$ matrix of infinite discrete sets $\{E_{i,j} \, : \, 0 \leq i, j \leq n\}$ such that:
\begin{enumerate}
\setcounter{enumi}{2}
\item $E_{0,j} = \widetilde{D}^{(j)}$ for each $j \in \{0, \ldots, n\}$;
\item $E_{i,j} \subseteq E_{i-1,j}$ for all $0 \leq j \leq n$ and all $1 \leq i \leq n$; 
\item If $0 \leq i \leq n$ then $0 < E_{i,j}<E_{i,j-1}'$ for all $n-i +1\leq j \leq n$; and
\item If $0 \leq i \leq n$ and $j < n - i$, then $E_{i, j} = \widetilde{D}^{(j)}$.
\end{enumerate}

The sets $E_{i,j}$ will be defined recursively, starting with the first row (when $i = 0$). For $i=0$, we simply observe that each set $E_{0,j} = \widetilde{D}^{(j)}$ is discrete by Fact~\ref{discrete_image}, and (4), (5), and (6) are trivial. 

Now suppose we have constructed $E_{i,0}, \dots, E_{i,n}$ satisfying (3)-(6), and we show how to define $E_{i+1, 0}, \ldots, E_{i+1,n} $.  We will define the sets $E_{i+1, j}$ by four different cases according to how $j$ relates to $n-i$.

In the first case, when $j \geq n-i+1$, we set $E_{i+1,j}=E_{i,j}$. 

For the second case, we must define $E_{i+1, j}$ when $j = n-i$. As $E_{i,n-i}$ is an infinite discrete set, by $\omega$-saturation of $\mc{R}$ there is a $c \in R$ so that both sets $$F_0=\{x \in E_{i,n-i}: x<c\}$$ and $$F_1=\{x \in E_{i,n-i} :x>c\}$$ are infinite.
Let $$E_{i+1,n-i}=F_0$$ and note that, in case $i > 0$, we have $E_{i+1,n-i+1}<E_{i+1,n-i}'$ by (5) for $j = n - i + 1$ .  

For the third case, when $j = n-i-1$, we set $$E_{i+1, n-i-1}=\{x \in \widetilde{D}^{(n-i-1)} : \gamma_{\widetilde{D}^{(n-i-1)}}(x) \in F_1\}.$$  This is an infinite discrete subset of $\widetilde{D}^{(n-i-1)} = E_{i, n-i-1}$ since $F_1 \subseteq E_{i,n-i} \subseteq \widetilde{D}^{(n-i)}$, and if $a \in E_{i+1, n-i-1}$, then $\gamma_{E_{i+1,n-i-1}}(a)>c$ and hence $E_{i+1, n- i-1}'>E_{i+1, n-i}$.  

Finally, for $j<n-i-1$, we define $E_{i+1,j}=E_{i,j} = \widetilde{D}^{(j)}$.  

Now letting $E_j = E_{n,j}$, we have a sequence of sets satisfying (1) and (2) above, and we are done.
\end{proof}

\section{Fine Structure for Discrete Sets in the Burden $2$ Case}

In this section, we will prove Theorem~\ref{def_in_G}.

Throughout this section $\mc{R}$ will be an expansion of a divisible ordered Abelian group  of burden $2$ which is definably complete and presented in a language $\mc{L}$, and $D$ will always be an infinite discrete set definable in such a structure.  Recall that by Theorem~\ref{finite_diff_set} if $E \subset R$ is any $\mc{R}$-definable discrete set (in particular $D$) then $E'$ is finite.

Theorem \ref{def_in_G} will follow from a result that such a  $D$ must be a finite union of sets which locally resemble arithmetic sequences. To make this more precise, we give the following definition.

\begin{definition}  A discrete set $E$ is called {\em pseudo-arithmetic} if it is infinite and $|E'|=1$.  If $E'=\{\eta\}$ we will  call $E$ {\em$\eta$-pseudo-arithmetic}.  A definable set $X$ is called {\em piecewise pseudo-arithmetic} if it is a finite union of points and definable pseudo-arithmetic sets.
\end{definition}

The bulk of this section is dedicated to proving:

\begin{thm}\label{union-arith}  If $\mc{R}=\langle R; +, <, \dots\rangle$ is a definably complete  expansion of a divisible ordered Abelian group of burden 2 and $D \subseteq R$ is definable and discrete then $D$ is piecewise pseudo-arithmetic.
\end{thm}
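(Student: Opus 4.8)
The plan is to combine Theorem~\ref{finite_diff_set} with an induction on the ``complexity'' of $\Z$-chains in $D$. Since $\mc{R}$ has burden $2$, Theorem~\ref{finite_diff_set} gives us that $D' = D^{(1)}$ is finite; write $D' = \{\eta_1, \ldots, \eta_m\}$ (together with possibly finitely many points of $D$ that are maximal or isolated, which we set aside). The idea is that a definable discrete set whose difference set is the \emph{finite} set $\{\eta_1, \ldots, \eta_m\}$ can be partitioned, in a definable way, according to which difference $\gamma_D(x)$ occurs at each point, and I want to show that only finitely many ``transition patterns'' between these differences can occur, so that $D$ breaks into finitely many pieces each of which is pseudo-arithmetic.

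First I would make the reduction to a single $\Z$-chain structure precise. For each $i \leq m$, let $D_i = \{x \in D : \gamma_D(x) = \eta_i\}$; this is a definable partition of the non-maximal part of $D$. Each $D_i$ is discrete (Fact~\ref{discrete_image}), but $D_i$ need not be pseudo-arithmetic because consecutive elements of $D_i$ in the ambient order need not be consecutive in $D$. The key point to extract is that within any single $\Z$-chain $\mc{Z}(a)$, the pattern of which $\eta_i$ appears must be \emph{eventually periodic} in both directions — this is exactly the content the paper flags as Proposition~\ref{event-per} (``every $\Z$-chain in such a set $D$ is eventually periodic''), obtained by adapting an argument from \cite{DG}. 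Granting that, each $\mc{Z}(a)$ decomposes (outside a finite initial and final segment) into finitely many arithmetic progressions, one for each residue class mod the period, and each such progression is pseudo-arithmetic with difference equal to the sum of the $\eta_i$'s over one period.

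Next I would globalize: I need that the \emph{period} and the \emph{pattern} are bounded uniformly over all $\Z$-chains, and that there are only finitely many distinct patterns, so that grouping $\Z$-chains by their eventual pattern yields finitely many definable pieces. Here is where I expect to invoke burden $2$ a second time, in the form of Lemma~\ref{interlaced} or Lemma~\ref{smaller}, or Fact~\ref{discrete_families}: if there were unboundedly long periods or infinitely many patterns, one could manufacture a definable family of discrete sets spaced ever further apart (or nested difference sets $D_{i+1} < D_i'$) and contradict the burden-$2$ hypothesis. Having fixed a finite list of possible eventual patterns, for each pattern $\pi$ and each residue $r$ modulo the period of $\pi$ I take the set of all elements of $D$ lying in a $\Z$-chain with eventual pattern $\pi$ and in position $\equiv r$; this is definable (the eventual pattern of $\mc{Z}(a)$ is a definable property of $a$ once periods are bounded), discrete, and pseudo-arithmetic. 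Throwing in the finitely many ``exceptional'' points — maximal elements of $D$, the bounded initial/final segments of each $\Z$-chain, and any finite $\Z$-chains — which by Fact~\ref{closedd} and a boundedness argument form a discrete hence (after the bound) finite set, we write $D$ as the desired finite union.

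The main obstacle, and the part that needs the real work rather than a citation, is the uniform boundedness of the periods and the finiteness of the set of eventual patterns of $\Z$-chains — equivalently, controlling how the finitely many differences $\eta_1, \ldots, \eta_m$ can be interleaved along $D$ globally, not just within one chain. Eventual periodicity of a single chain (Proposition~\ref{event-per}) is the local input; converting that into a \emph{definable finite} decomposition requires ruling out, via an inp-pattern construction in the spirit of Lemma~\ref{interlaced}, the possibility that longer and longer periodic blocks accumulate across different $\Z$-chains. I would structure the argument so that a failure of uniformity produces, by compactness, an infinite family of discrete sets of the forbidden type, contradicting strength or burden $2$; the bookkeeping of how to read off such a family from a ``pattern explosion'' is the delicate step.
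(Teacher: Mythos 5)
Your overall strategy matches the paper's (finite $D'$ via Theorem~\ref{finite_diff_set}, eventual periodicity of $\Z$-chains, then a uniform/global decomposition into pseudo-arithmetic pieces), but the step you yourself flag as ``the main obstacle'' is precisely the content of the proof, and your sketch of it has concrete defects rather than just missing details. First, you never perform the reductions that make the global analysis possible: the paper first passes to a finite convex partition into \emph{uniformized} pieces (Lemmas~\ref{gapsize}--\ref{antipart}, Corollary~\ref{uniformized}) and then reduces to the case where all elements of $D'$ are Archimedean equivalent (Proposition~\ref{arch-diff}). These hypotheses are not cosmetic: the lemmas that control behavior \emph{across} $\Z$-chains (Lemma~\ref{uniform}, Corollaries~\ref{fin-big}, \ref{cof-con}, \ref{convexcomp}) and even the uniform period bound (Lemma~\ref{recurbound}, which uses that $\delta_1$ and $\delta_n$ are Archimedean equivalent) are proved under them, so even your cited local input, Proposition~\ref{event-per}, is only available after these reductions. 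Your proposed inp-pattern argument ``in the spirit of Lemma~\ref{interlaced}'' to rule out pattern explosion is exactly where the paper instead deploys this machinery (Lemma~\ref{uniform} through Proposition~\ref{struct-def}, then Lemmas~\ref{recurbound}, \ref{fin-chains}, Proposition~\ref{disc-struct}, Lemma~\ref{arith}), and nothing in your outline substitutes for it.

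Second, two specific steps as written would fail. You propose to discard ``the bounded initial/final segments of each $\Z$-chain'' and the finite chains into an exceptional set that is ``discrete hence (after the bound) finite'': discreteness does not give finiteness in this setting (the whole paper concerns infinite definable discrete sets), and since $D$ may contain infinitely many $\Z$-chains, the per-chain non-periodic segments could a priori union to an infinite set. The paper avoids this by proving (Corollary~\ref{cof-con}, Lemma~\ref{fin-chains}) that all but \emph{finitely many} chains meeting a given $P_\sigma$ are $\sigma$-chains throughout, not merely eventually, and then absorbing the finitely many exceptional chains by shifting interval endpoints finitely (Proposition~\ref{disc-struct}). Relatedly, your claim that ``the eventual pattern of $\mc{Z}(a)$ is a definable property of $a$ once periods are bounded'' needs an argument: $\Z$-chains are not uniformly definable sets, and quantifying over a whole chain is not first-order; the paper's workaround is to work only with the definable sets $P_\sigma$ and with closed intervals (Propositions~\ref{struct-def} and \ref{disc-struct}), from which the pseudo-arithmetic pieces $E_i$ of Lemma~\ref{arith} are read off. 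So the proposal is the right roadmap, but the global finiteness/definability step is a genuine gap, and the finiteness claim for the exceptional set is incorrect as stated.
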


In order to establish Theorem~\ref{union-arith} we first prove a weaker result with more assumptions on the discrete set $D$ and then later show how the general result can be deduced from this weaker version.  In order to state our desired weakening of 
Theorem~\ref{union-arith} we need a definition.

\begin{definition}  $D$ is  {\em narrow} if any two elements of $D'$ are Archimedean equivalent.
\end{definition}

We may now state our weaker version of Theorem~\ref{union-arith}.

\begin{prop}\label{weak-union-arith}  If $\mc{R}=\langle R; +, <, \dots\rangle$ is an $\omega$-saturated, definably complete  expansion of a divisible ordered Abelian group of burden 2 and $D$ is definable, discrete, narrow, and bounded below, then $D$ is piecewise pseudo-arithmetic.
\end{prop}

We begin by proving Proposition~\ref{weak-union-arith}.  Then in turn we show how  we may reduce Theorem~\ref{union-arith} to this weaker proposition.  Once we have established Theorem~\ref{union-arith} we then show how our main result,Theorem~\ref{def_in_G}, follows.  We finish the section with an example showing the limits of Theorem~\ref{def_in_G}.

Before we continue we note  that in order to  establish Theorem~\ref{union-arith} we may assume that the structure $\mc{R}$ is $\omega$-saturated.  

\begin{prop}
\label{sat_assumption}
For any infinite cardinal $\kappa$, to prove Proposition~\ref{union-arith} in general, it is sufficient to prove it under the assumption that the structure $\mc{R}$ is $\kappa$-saturated.
\end{prop}

\begin{proof}
Suppose that $\mc{R}=\langle R; +, <, \dots\rangle$ is any definably complete  expansion of a divisible ordered Abelian group (not necessarily $\kappa$-saturated) of burden 2 and $D$ is an infinite definable discrete set.  Let $\mc{R}_1$ be a $\kappa$-saturated elementary extension of $\mc{R}$, and let $D_1 \subseteq R_1$ be the subset definable by the same formula that defines $D$ in $R$. Then $D_1$ is also discrete. Applying  Proposition~\ref{union-arith} for $\kappa$-saturated structures, we conclude that $D_1$ is piecewise pseudo-arithmetic.
Thus we find formulas $\varphi_1(x, \ob{a}_1), \dots, \varphi_m(x, \ob{a}_m)$ and elements $b_1, \dots, b_l \in R_1$ so that 
$D=E_1 \cup \dots \cup E_m \cup \{b_1, \dots, b_l\}$ where each $E_i$ is a pseudo-arithmetic set defined by $\varphi_i(x, \ob{a}_i)$.  But then we can find $\ob{a}_1^*, \dots \ob{a}_m^* \in R$ and $c_1, \dots c_l \in R$ so that 
$D=E_1^*\cup\dots \cup E_m^* \cup \{c_1, \dots, c_l\}$ where each $E_i^*$  is a discrete subset of $R$ defined by $\varphi_i(x, \ob{a}_i^*)$ with $|(E_i^*)'|=1$.  Not all of the $E_i^*$ are necessarily pseudo-arithmetic as some may be finite, but nonetheless we have written $D$ as union of finitely many points and pseudo-arithmetic sets as desired. 
\end{proof}

Note that Proposition~\ref{weak-union-arith} is only an intermediate step in establishing Theorem~\ref{union-arith}, and thus we include the hypothesis that $\mc{R}$ is $\omega$-saturated in the statement of Proposition~\ref{weak-union-arith}.


Finally the following definition will occur throughout the following sections:

\begin{definition} \label{dconvex}
We will say that $X \subseteq D$ is {\em $D$-convex} if whenever $a,b \in D \cap X$ and $a<c<b$ for some $c \in D$ then $c \in X$.
\end{definition}

\subsection{Establishing Proposition~\ref{weak-union-arith}}

In this subsection we prove Proposition~\ref{weak-union-arith}.   Thus in this subsection we will maintain:

\begin{ass}
\label{3.1}
  $D$ is a definable discrete set which is narrow and bounded below, and $\mc{R}$ is $\omega$-saturated.
\end{ass}

In order to prove Proposition~\ref{weak-union-arith} we first need to establish a sequence of preliminary results.  The first of these is a statement describing definable subsets of $D$.

\begin{prop}\label{struct-def} Let $E \subseteq D$ be definable.  Then there are $M \in \NN$ and finitely many closed intervals or closed rays $C_1, \dots, C_n$ in $R$ such that:
\begin{enumerate}
\item $E\subseteq(C_1 \cap D)  \cup \dots \cup (C_n \cap D)$;
\item The $C_i$ are pairwise disjoint;
\item If $C_i = [b,b] = \{b\}$, then $b \in E$;
\item If $C_i$ is an infinite closed interval, then $C_i \cap E$ is infinite;
\item If $C_i$ is an infinite interval and $C_i$ is bounded above, then $C_i$ is of the form $[a_i, b_i]$ with $b_i \in E$ and $a_i \in E$, and otherwise it is of the form $[a_i, \infty)$ with $a_i \in E$;
\item If $C_i$ is an infinite interval and $Z$ is a $\ZZ$-chain with $Z \subseteq C_i$ then $E$ is both cofinal and coinitial in $Z$, and moreover for all $b \in Z$, $\{b, S_D(b), \dots S_D^M(b)\} \cap E \not= \emptyset$;
\item If $C_i$ is an infinite interval of the form $[a_i,b_i]$ or $[a_i, \infty)$ then $E$ is cofinal in  $\mc{Z}(a_i)$, and moreover for all $b \in \mc{Z}(a_i)$ with $b \geq a_i$, \[\{b, S_D(b), \dots, S_D^M(b)\} \cap E \not= \emptyset;\] 
\item If $C_i$ is an infinite interval of the form $[a_i, b_i]$  then $E$ is coinitial in $\mc{Z}(b_i)$, and moreover for all $a \in \mc{Z}(b_i)$ with $a \leq b_i$,
 \[\{a, S_D^{-1}(a), \dots, S_D^{-M}(a)\} \cap E \not= \emptyset.\]
\end{enumerate}
\end{prop} 

We begin with an essential lemma and some useful corollaries.

\begin{lem}\label{uniform}  Suppose that $\{D_a: a \in X\}$ is a definable family of subsets of $D$ and $\delta = \max(D')$.  Then there is an $l \in \omega$ such that for any $a \in X$, the set $\{d \in D_a \, : \, \gamma_{D_a}(d) \geq l \cdot \delta\}$ is finite.

\end{lem}

\begin{proof} Suppose that no such $l$ exists.  Then by compactness we find $a \in X$ such that there are infinitely many $d \in D_a$ with $\gamma_{D_a}(d) \gg \delta$.  Let $\{d_i \, : \, i \in \omega\}$ list the first $\omega$ elements of $D$.  By $\omega$-saturation of $\mc{R}$, there is an element $\varepsilon \in R$ such that
  
  \begin{enumerate}
  \item for every $i \in \omega$, $d_i < \varepsilon$; and
  \item there are infinitely many $d \in D_a$ such that $\gamma_{D_a}(d) > \varepsilon$.
  \end{enumerate}
Let 
$F=\{x \in D_a : \gamma_{D_a}(x)\geq\varepsilon\}$ and notice that $F' \geq \varepsilon$. Thus $F$ and $D \cap [0, \varepsilon)$ are infinite discrete sets such that $D \cap [0,\varepsilon]<F'$, violating Theorem~\ref{descending_prime}.
\end{proof}

If $E \subseteq D$ is definable, then we can apply Lemma~\ref{uniform} to the family of sets $D_a = E$ to immediately deduce the following:

\begin{cor}\label{fin-big}  If $E \subseteq D$ is definable and $\delta = \max(D')$, then there can be only finitely many $b \in E$ with $\gamma_E(b) \gg \delta$.
\end{cor}



Let $\equiv$ be the convex equivalence relation on $D$  so that  $a \equiv b$ for $a, b \in D$ if and only if 
$|a-b|<l \cdot \max{(D')}$ for some $l \in \omega$.   Notice as $D$ is narrow we also have $a \equiv b$ if and only if $|a-b|<l \cdot\min{(D')}$ for some $l \in \omega$.  Or to state things more simply, $a \equiv b$ if and only if $a$ and $b$ lie on the same $\ZZ$-chain of $D$.    By saturation $\sfrac{D}{\equiv}$ is a dense linear ordering.

\begin{cor}\label{convexcomp}  If $E \subseteq D$ is definable then $\sfrac{E}{\equiv}$ has finitely many convex components in $\sfrac{D}{\equiv}$.
\end{cor}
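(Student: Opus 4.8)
The plan is to argue by contradiction, supposing that $\sfrac{E}{\equiv}$ has infinitely many convex components in $\sfrac{D}{\equiv}$, and then to extract from this configuration a violation of Corollary~\ref{fin-big}. First I would observe that since distinct convex components of $\sfrac{E}{\equiv}$ are separated in $\sfrac{D}{\equiv}$, between consecutive components there must be a ``gap'': an element of $D \setminus E$ (or the boundary behavior at the maximal element of a component) such that the successor in $E$ of the top element of one component lies in a different $\equiv$-class. Concretely, let $a_i$ be the $\equiv$-maximal element of the $i$-th convex component of $\sfrac{E}{\equiv}$ that is not the last component; then $a_i$ is not maximal in $E$, and by construction $S_E(a_i)$ lies in a strictly larger $\equiv$-class than $a_i$, i.e. $|S_E(a_i) - a_i| \geq l \cdot \delta_n$ for every $l \in \omega$, which is exactly $\gamma_E(a_i) \gg \delta_n$.

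The key point is that having infinitely many convex components yields infinitely many such ``boundary'' elements $a_i \in E$ with $\gamma_E(a_i) \gg \delta_n$. This is precisely what Corollary~\ref{fin-big} forbids: since $E \subseteq D$ is definable and the burden of $\mc{R}$ is $2$, only finitely many $b \in E$ can have $\gamma_E(b) \gg \delta_n$. Hence the assumption of infinitely many convex components is impossible, and $\sfrac{E}{\equiv}$ has only finitely many convex components in $\sfrac{D}{\equiv}$.

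The main obstacle I anticipate is making precise the claim that each pair of consecutive convex components of $\sfrac{E}{\equiv}$ contributes a distinct witness $a_i \in E$ with $\gamma_E(a_i) \gg \delta_n$. One must be careful that the top element $a_i$ of a component is genuinely non-maximal in $E$ — which holds because it is not in the last component — and that $S_E(a_i)$, being the least element of $E$ above $a_i$, indeed falls into the next convex component (so that $a_i \not\equiv S_E(a_i)$, giving $\gamma_E(a_i) \gg \delta_n$ by definition of $\equiv$). One should also note that $\omega$-saturation of $\mc{R}$ is not actually needed for this direction of the argument; it was invoked before the corollary only to guarantee that $\sfrac{D}{\equiv}$ is densely ordered, which plays no role here. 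Once the correspondence between boundaries and large-difference elements is set up, the conclusion is immediate from Corollary~\ref{fin-big}.
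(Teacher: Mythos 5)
Your overall strategy is the same as the paper's: from infinitely many convex components, manufacture infinitely many elements of $E$ whose difference to their $E$-successor is $\gg \delta_n$, contradicting Corollary~\ref{fin-big}. The one genuine gap is the very first step, where you ``let $a_i$ be the $\equiv$-maximal element of the $i$-th convex component.'' Such an element is not known to exist at this point: $\sfrac{D}{\equiv}$ is densely ordered and each $\equiv$-class of $D$ is a $\ZZ$-chain, which in general has no greatest element; so a priori a convex component of $\sfrac{E}{\equiv}$ might have no maximal class, and even if it does, $E$ could be cofinal in that chain without a maximum (Corollary~\ref{cof-con} says cofinality is in fact the typical behaviour), in which case every element of $E$ in that component has its $E$-successor inside the same class and contributes no witness. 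So the existence of your witnesses requires an argument, and it is exactly here that definable completeness and Fact~\ref{closedd} must be invoked; your list of anticipated obstacles addresses the easier points (non-maximality in $E$, landing in a higher class) but not this one.

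The paper sidesteps the issue by working from the other side of the gap: for each complement component it picks $a_i \in D$ whose class contains no element of $E$ but which has $E$-classes above and below, and sets $b_i = \max\{e \in E : e < a_i\}$; this maximum exists because the set is definable and bounded above and $E$ is closed with no accumulation points (definable completeness together with Fact~\ref{closedd}). Then $S_E(b_i) > a_i$ while $a_i - b_i \geq l\delta_n$ for every $l \in \omega$, so $\gamma_E(b_i) \gg \delta_n$, and distinct gaps give distinct $b_i$, yielding the contradiction with Corollary~\ref{fin-big} exactly as you intend. (This argument also shows a posteriori that each non-last component does have a greatest element of $E$, so your assertion is true; it is only its justification that is missing, and supplying it essentially reproduces the paper's proof.) Your remark that $\omega$-saturation is not needed is correct, but note that it is precisely the density of $\sfrac{D}{\equiv}$ and the $\ZZ$-chain structure of the classes that make the existence of the maxima you use a nontrivial point rather than a formality.
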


\begin{proof}  Suppose otherwise and let $C_i$ for $i \in \omega$ be distinct convex components of $\sfrac{E}{\equiv}$ in $\sfrac{D}{\equiv}$.  As $\mathcal{R}$ is assumed to be $\omega$-saturated we may assume by compactness that $C_i<C_{i+1}$ for all $i \in \omega$.  We may find $a_i \in D \setminus E$ for $i \in \omega$ so that $C_i<\left(\sfrac{a_i}{\equiv}\right)<C_{i+1}$ and so that $a_i \not\equiv e$ for any $e \in E$.

Now let $b_i =\max\{e \in E : e<a_i\}$ for $i \in \omega$.  As $C_i<\left(\sfrac{a_i}{\equiv}\right)$ and  $\mathcal{R}$ is definably complete,  $b_i$ exists; and as $E$ is closed (by Fact~\ref{closedd}), $b_i \in E$.  
Note that $b_i$ is not maximal in $E$ for any $i \in \omega$ as $\left(\sfrac{b_i}{\equiv}\right)<C_{i+1}$.   We claim that $\gamma_E(b_i)>l \cdot \max{(D')}$ for all $i, l \in \omega$.   Suppose otherwise and that $\gamma_E(b_i) \leq l \cdot \max{(D)}$ for some $i,l \in \omega$.   But by choice of $b_i$ we must have that $S_E(b_i)>a_i$.  Hence $|a_i-b_i| \leq l \cdot \max{(D')}$ and so $a_i \equiv b_i$  but this violates our choice of $a_i$.  Thus for all $i, l \in \omega$ we have that $\gamma_E(b_i)>l \cdot \max{(D')}$.  But this violates  Corollary \ref{fin-big}.


\end{proof}

\begin{cor}\label{gaps}  Let $E \subseteq D$ be definable.  Then there is $M_0(E) \in \omega$ so that:

\begin{enumerate}
\item If $E$ is cofinal in a $\ZZ$-chain $Z$ then 
\[\{b, S_D(b), \dots, S_D^{M_0(E)}(b)\} \cap E \not=\emptyset\] for all sufficiently large $b \in Z$;
\item If $E$ is coinitial in a $\Z$-chain $Z$ then
  \[\{b, S_D(b), \dots, S_D^{-M_0(E)}(b)\} \cap E \not=\emptyset\] for all sufficiently small $b \in Z$;
\item If $E$ is both cofinal and coinitial in a $\Z$-chain $Z$ then
 \[\{b, S_D(b), \dots, S_D^{M_0(E)}(b)\} \cap E \not=\emptyset\] for all $b \in Z$.
\end{enumerate}
\end{cor}

\begin{proof} First suppose that (1) fails.  Let $N \in \omega$.  For $i \in \omega$ we may find $a_i \in D$ all lying on a single $\ZZ$-chain so that 
\[\{S_D^{-N}(a_i), \dots, S^{-1}_D(a_i), a_i, S_D(a_i), \dots, S^N_D(a_i)\}\cap E = \emptyset\] and $b_i \in E$ with $a_i<b_i<a_{i+1}$ for all $i \in \omega$.

Then by compactness we find $a_i^* \in D$ for $i \in \omega$ so that $\{S^M_D(a_i^*) : M \in \ZZ\} \cap E = \emptyset$ and $b_i^* \in E$ so that $a_i^*<b_i^*<a_{i+1}^*$.  For each $i \in \omega$ we have that $|a_i^*-c|>l \cdot \min{(D')}$ for all $l \in \omega$  and any $c \in E$.  As $D$ is narrow $a_i^* \not\equiv c$ for any $c \in E$.  Thus $\left(\sfrac{a_i^*}{\equiv}\right)<\left( \sfrac{b_i^*}{\equiv}\right)<\left(\sfrac{a_{i+1}^*}{\equiv}\right)$ and furthermore $\left(\sfrac{a_i^*}{\equiv}\right) \notin \sfrac{E}{\equiv}$ but $\left(\sfrac{b_i^*}{\equiv}\right) \in \sfrac{E}{\equiv}$ for all $i \in \omega$.  In particular   each $\left(\sfrac{b_i^*}{\equiv}\right)$ is in a distinct convex component of $\sfrac{E}{\equiv}$ in 
$\sfrac{D}{\equiv}$ violating
Corollary~\ref{convexcomp}

The proof of (2) is identical to that of (1) except that for each $N \in \omega$ we find $c_i \in E$ and $a_i \in D$ for $i \in \omega$ so that \[\{S_D^{-N}(a_i), \dots, S^{-1}_D(a_i), a_i, S_D(a_i), \dots, S^N_D(a_i)\}\cap E = \emptyset\] with $a_i>c_i>a_{i+1}$.

Now assume that (3) fails.  As (1) and (2) hold, for each $N \in \omega$ we can find  pairwise distinct $\ZZ$-chains $Z_i$ of $D$ and $
a_i \in Z_i$ for $i \in \omega$ so that \[\{S_D^{-N}(a_i), \dots, S^{-1}_D(a_i), a_i, S_D(a_i), \dots, S^N_D(a_i)\}\cap E = \emptyset\] together with $b_i, c_i \in E \cap Z_i$ so that $c_i<a_i<b_i$.   We can assume that either the $Z_i$ are ordered so that $Z_i<Z_{i+1}$ for all $i \in \omega$ or that $Z_i>Z_{i+1}$ for all $i \in \omega$.  In the case that $Z_i<Z_{i+1}$ argue with $a_i, b_i$ exactly as in the proof of (1).  In the case that $Z_i>Z_{i+1}$ argue with $a_i, c_i$ exactly as in the proof of (2). 
\end{proof}

We may now prove Proposition~\ref{struct-def}.
Recall that we always assume that $D$ is bounded below,  thus any interval or ray $C_i$ occurring in the statement of Proposition~\ref{struct-def} may be assumed to be bounded below.

\begin{proof}  For the purposes of this proof let us call a definable $E \subseteq D$ {\em neat} if we may find $M \in \NN$ and $C_1, \dots, C_n$ closed intervals or rays satisfying condition (1)-(8) of Proposition~\ref{struct-def}.

Notice that for clauses (6), (7), and (8) in the statement of Proposition~\ref{struct-def} the ``moreover'' portion will follow from the initial statements and Corollary \ref{gaps}.  Hence to establish that a set is neat we need only produce the intervals or rays $C_1, \dots, C_n$ witnessing (1)-(8) in the definition of neat without the ``moreover'' clauses.

It is immediate that if $E=E_1 \cup \dots \cup E_l$ where the $E_i$ are definable, pairwise disjoint, and neat then $E$ is neat.

Let $E \subseteq D$ be definable.  By Corollary \ref{convexcomp}, $\sfrac{E}{\equiv}$ has finitely many convex components.  Let $H_j$ for $1 \leq j \leq k$ be the  convex components of $\sfrac{E}{\equiv}$.  We may pick $e_j,f_j \in D \cup \{\infty\}$ so that $H_j = \sfrac{E \cap [e_j,f_j)}{\equiv}$.  Setting $E_j=[e_j,f_j) \cap E$, to establish that $E$ is neat it suffices to show that each $E_j$ is neat.

Thus fix $1 \leq j \leq k$.  If $E_j$ is finite then the result is immediate, hence we may assume that $E_j$ is infinite.  Let $a =\min(E_j)$, which exists as $E_j$ is bounded below.  If $E_j$ is bounded above let $b=\max(E_j)$.  Let $Z$ be any $\ZZ$-chain so that $E_j \cap Z\not=\emptyset$ and $\mc{Z}(a)<Z<\mc{Z}(b)$ (if $b$ exists).  We claim that $E_j$ must be both cofinal and coinitial in $Z$.  Suppose otherwise, then for any $N \in \omega$ we can find $c \in Z$ so that $\{S^{l}_D(c) : -N \leq l \leq N\} \cap E_j =\emptyset$.  By compactness we can then find another $\ZZ$-chain $Z_0$ so that $\mc{Z}(a)<Z_0<\mc{Z}(b)$ (if $b$ exists) and $Z_0 \cap E_j= \emptyset$. But this violates the fact that $\sfrac{E_j}{\equiv}$ is convex in $\sfrac{D}{\equiv}$.  Hence $E_j$ is cofinal and coinitial in $Z$.  Similarly we can conclude that $E_j$ is confinal in $\mc{Z}(a)$ and coinitial in $\mc{Z}(b)$ (if $b$ exists).  Thus  $C_1=[a,b]$ if $b$ exists, or $C_1=[a, \infty)$ otherwise  is a closed interval or ray witnessing that $E_j$ is neat.  
\end{proof}

To continue our proof of Proposition~\ref{weak-union-arith} we proceed with a combinatorial analysis of $\Z$-chains in $D$ much like that in \cite{DG} to establish that $\Z$-chains in $D$ are approximately periodic.  To make this precise we need a definition.

\begin{definition}
\label{periodic_chains}
 Suppose that $\tau$ is an infinite sequence $\{a_i \, : \, i \in I\}$ of elements from the set $D'$ where the index set $I$ is either $\Z$, $\omega$, or $\omega^*$ (that is, $\omega$ with the reverse ordering). In case $I = \omega^*$, we use $i+k$ to denote the $k$-th successor of $i$ if such an element exists, thus whenever we write $i+k$ we are presuming the $k$-th successor of $i$ exists.
 
 \begin{enumerate}
 \item The sequence $\tau$ is \emph{$m$-periodic} if whenever $i$ and $i+m$ belong to $I$, we have $a_{i+m} = a_i$.
 \item If $I = \omega$, the sequence $\tau$ is \emph{eventually $m$-periodic} if there is some $N \in I$ such that for every $k \geq N$, we have $a_{k+m} = a_k$.
 \item If $I = \omega^*$, the  $\tau$ is \emph{eventually $m$-periodic} if there is some $N \in \omega^*$ such that for every $k \leq N$, we have $a_{k+m} = a_k$.
 \item If $I = \Z$, the sequence $\tau$ is \emph{eventually $m$-periodic} if \textbf{both} of the subsequences $\{a_i \, : \, i \geq 0\}$ and $\{a_i \, : \, i \leq 0\}$ are eventually $m$-periodic, according to (2) and (3) above.
 \item If $I = \omega$ or $\omega^*$ and $\sigma \in (D')^m$, the sequence $\tau$ is \emph{$\sigma$-periodic} if it is $m$-periodic and the first $m$ elements of $\tau$ (or the last $m$ elements, in case $I = \omega^*$) are a cyclic permutation of $\sigma$. The notion of \emph{eventually $\sigma$-periodic} are defined analogously.
 \item The sequence $\tau$ is \emph{(eventually) periodic} if it is (eventually) $m$-periodic for some $m \in \omega \setminus \{0\}$.
 \item A $\ZZ$-chain $Z$ from $D$ is \emph{(eventually) $m$-periodic} if for some $a \in Z$, the corresponding sequence of elements $\langle \gamma_D^i(a) \, : \, i \in \Z\rangle$ is (eventually) $m$-periodic. Similarly for subsets of $D$ of the form $\mc{Z}_{\geq}(b)$ or $\mc{Z}_{\leq}(b)$.

\end{enumerate}

\end{definition}

For the next Proposition and its consequences the following definition is convenient.

\begin{definition}  If $E$ is an infinite definable discrete set and $\{\simga_1, \dots, \simga_k\}$ is a finite set of finite sequences from $E'$ we call $\{\sigma_1, \dots, \sigma_k\}$ a {\em characteristic set} for $E$ if  every $\omega$-chain of $E$ and every $\omega^*$-chain of $E$ is eventually $\simga_i$-periodic for some $1 \leq i \leq k$.
\end{definition}

Our next goal is to show:

\begin{prop}\label{periodicconc}  $D$ has a characteristic set.
\end{prop}

In order to establish Proposition~\ref{periodicconc} we need some preliminary definitions and lemmas.

\begin{definition} For $a, b, c, d \in D$ such that $a<b$ and $c < d$, we define $(a,b) \sim (c,d)$ if and only if $b-a=d-c$ and for all $e$ in the interval $(a,b)$ we have that $e \in D$ if and only if $e+c-a \in D$.  For any pair $(a,b) \in D^2$ such that $a<b$, let $P_{(a,b)}=\{c \in D : (a,b) \sim (c, c+(b-a))\}$.
\end{definition}

Roughly speaking $(a,b) \sim (c,d)$ if these two intervals are ``isomorphic'' as far as $D$ is concerned.

\begin{definition} For a finite sequence $\sigma=\langle c_1, \dots, c_m\rangle$ of elements from  $D'$, let $P_{\sigma}$ be the set of all $d \in D$ 
so that $S^l(d)-S^{l-1}(d)=c_l$ for all $ l \in \{1, \ldots, m\}$. 
\end{definition}

Notice that if ${\simga}$ is any finite sequence from $D'$ with $P_{{\simga}}$ non-empty then 
$P_{{\sigma}}$ is equal to $P_{(a,b)}$ for some $a, b \in D$.  In particular the family of all $P_{{\sigma}}$ is part of a uniformly definable family of subsets of $D$.

\begin{lem}\label{recurbound} There is some fixed $m_0 \in \omega$ such that for any $\omega$-chain $C$ in $D$ and any $k \in \omega$, there are at most $m_0$ sequences ${\simga}$ of length $k$ so that $P_{{\sigma}}$ is not bounded above in $C$. Similarly, there is a fixed $m_1 \in \omega$ such that for every $\omega^*$-chain $C$ in $D$ and any $k \in \omega$, there are at most $m_1$ sequences ${\simga}$ of length $k$ so that $P_{{\sigma}}$ is not bounded below in $C$.
\end{lem}

\begin{proof} Say $D' = \{\delta_1, \ldots, \delta_n\}$ where the $\delta_i$ are listed in increasing order. First we will give the proof of the existence of $m_0$. By Lemma \ref{uniform}, there is some $l_0 \in \omega$ such that for any $a, b \in D$, there are only finitely many $c \in P_{(a,b)}$ with $\gamma_{P_{(a,b)}}(c)\geq l_0 \cdot \delta_n$. Since $\delta_1$ is Archimedean equivalent to $\delta_n$, there is some $l \in \omega$ such that $l \cdot \delta_1 \geq l_0 \cdot \delta_n$, and hence:

\medskip
\textit{$(*)$ For any $a, b \in D$, there are only finitely many $c \in P_{(a,b)}$ with $\gamma_{P_{(a,b)}}(c)\geq l \cdot \delta_1$.}
\medskip

We claim that $m_0 = l$ works.   Fix an $\omega$-chain $C$ and a natural number $k$ and suppose that there are $m$ distinct sequences ${\sigma}_1 \dots, {\sigma}_m$ of length $k$ from $D'$ so that no $P_{{\sigma}_i}$ is bounded above in $C$.  If there is a $\sigma_i$ such that the set $$\{ a \in C \, : \, P_{{\sigma}_i} \cap \{a, S_D(a), \ldots, S_D^{l-1}(a)\} = \emptyset\}$$ is cofinal in $C$, then $P'_{\sigma_i}$ contains infinitely many elements which are greater than or equal to $l \cdot \delta_1$, contradicting statement $(*)$ above. Therefore for all sufficiently large $a \in C$, each set $P_{{\simga}_i}$ must intersect $\{a, S_D(a), \dots S_D^{l-1}(a)\}$. Since the sets $P_{\sigma_1}, \ldots, P_{\sigma_m}$ are pairwise disjoint, this implies that $m \leq l$, as we wanted.

The same proof, \emph{mutatis mutandis}, shows the existence of $m_1$.

\end{proof}

At this point, we can repeat some arguments from our paper \cite{DG} (specifically, Lemma 2.33 and Proposition 2.35) to conclude that every $\omega$-chain is eventually periodic. In the interest of making the current paper self-contained, we now present a simplified version of the combinatorial argument found there. Suppose that $\tau$ is an infinite sequence $a_0, a_1, \ldots$ of elements from the set $D' = \{\delta_1, \ldots, \delta_n\}$, and recall the definition of ``eventually $m$-periodic'' from Definition~\ref{periodic_chains} above. Say that a finite sequence $\sigma$ from $D'$ is \emph{infinitely recurring in $\tau$} in case it occurs infinitely often as a subsequence $a_i, a_{i+1}, \ldots, a_{i+k}$ of $\tau$. Let $f_\tau \, : \, \omega \rightarrow \omega$ be the function defined by the rule that $f_\tau(k)$ is the number of infinitely-recurring sequences of length $k$ in $\tau$. 

The crucial fact about $f_\tau$ is the following:

\begin{lem}
\label{tau_periodic}
If $M \in \omega$ and $f_\tau(k) \leq M$ for every $k \in \omega$, then the sequence $\tau$ is eventually $m$-periodic for some $m \leq M$.
\end{lem}

\begin{proof}
Suppose that $f_\tau(k) \leq M$ for every $k \in \omega$. Note that since $D'$ is finite, it follows from the Pigeonhole Principle that every infinitely-recurring sequence of length $k$ in $\tau$ can be extended to at least one infinitely-recurring sequence of length $k+1$ in $\tau$. Hence $$f_\tau(k) \leq f_{\tau}(k+1),$$ that is, the function $f_\tau$ is nondecreasing. By the hypothesis that values of  $f_\tau$ are bounded above by some $M$, there is $k \in \omega$ for which $f_\tau(k) = f_\tau(k+1)$. 

\begin{claim}
Without loss of generality, there is $k \in \omega$ such that $f_\tau(k) = f_{\tau}(k+1)$ and furthermore every subsequence of $\tau$ of length $k$ or $k+1$ is infinitely recurring.
\end{claim}

\begin{proof}
First pick some $k$ such that $f_\tau(k) = f_\tau(k+1)$, which exists by the argument just above. If $\sigma$ is a subsequence of $\tau$ which is \emph{not} infinitely recurring, then there is some point $p_\sigma \in \omega$ such that $\sigma$ never occurs past $p_\sigma$. Let $p$ be the maximum of $p_\simga$ for all subsequences $\sigma$ of length $k$ or $k+1$ which occur only finitely often in $\tau$ (of which there can be only finitely many, since $D'$ is finite). Then every subsequence of length $k$ or $k+1$ which occurs past the point $p$ must be infinitely recurring.
\end{proof}

Now fix some $k$ as in the conclusion of the Claim above. Hence if $\sigma$ is any length-$k$ subsequence of $\tau$, it has a unique length-$(k+1)$ end extension $\sigma^+$ which occurs as a subsequence of $\tau$, and we define $\sigma'$ to be the sequence formed by the last $k$ elements of $\sigma^+$. Let $\sigma$ be the first $k$ elements of $\tau$ and form the sequence $$\sigma, \sigma', \sigma'', \ldots, \sigma^{(i)}, \ldots$$ by recursively applying this operation. By the Pigeonhole Principle, there must be $i < j$ such that $\sigma^{(i)} = \sigma^{(j)}$, and we may pick $j$ so that $j-i \leq f_\tau(k) \leq M$. Thus $\tau$ is $(j-i)$-periodic past the first instance of $\sigma^{(i)}$, and we are done.

\end{proof}

From the previous Lemma, we can quickly deduce Proposition~\ref{periodicconc}:

\begin{proof}
 We will show that there are finitely many finite sequences $\sigma_1, \ldots, \sigma_k$ such that any $\omega$-chain of $D$ is eventually $\sigma_i$-periodic for some $i \in \{1, \ldots, k\}$, and the same argument, with only notational changes, applies to $\omega^*$-chains of $D$, yielding the conclusion of Proposition~\ref{periodicconc}.

So fix some $\omega$-chain $C$ of $D$. Pick $a\in C$ arbitrarily and let $\tau$ be the $\omega$-chain $(\gamma_D(a), \gamma_D(S_D(a)) \dots \gamma_D(S_D^l(a)), \dots)$. By Lemma~\ref{recurbound}, $f_\tau(n) \leq m_0$ for all $n \in \omega$. Applying Lemma~\ref{tau_periodic} we conclude that $\tau$, and hence $C$, is eventually $m$-periodic for some $m \leq m_0$. Since $D'$ is finite, there are only finitely many sequences of length at most $m_0$ from $D'$, and the conclusion of Proposition~\ref{periodicconc} follows.
\end{proof}

We need one final intermediate proposition elucidating the structure of $D$ which will allow us to deduce Proposition~\ref{weak-union-arith}.  In order to state this result we need a definition.

\begin{definition} Let $\sigma$ be any finite sequence of elements of $D'$. A closed interval $I \subseteq R^{\geq 0}$ is a {\em $\sigma$-interval} if:

\begin{enumerate}

\item $I=[a,b]$ or $[a, \infty)$ with $a,b \in D$.
\item $I \cap D$ is infinite.
\item If $Z$ is a $\ZZ$-chain and $Z \subset I $ then $Z$ is $\sigma$-periodic.
\item If $I=[a,b]$ or $[a, \infty)$ then $\mc{Z}_{\geq}(a)$ is $\sigma$-periodic.
\item If $I=[a,b]$ then $\mc{Z}_{\leq}(b)$ is $\sigma$-periodic.

\end{enumerate}
\end{definition}

Given this definition we aim to prove:

\begin{prop}\label{disc-struct}  Assume $\{\sigma_1, \dots, \sigma_k\}$ is a characteristic set for $D$.   There is a finite collection of pairwise disjoint intervals $\mc{C}$ so that each $I \in \mc{C}$ is a $\sigma_i$-interval for some $1 \leq i \leq k$ and $D \setminus \bigcup_{I \in \mc{C}}I$ is finite.
\end{prop}

Fix  $\{\sigma_1, \dots, \sigma_k\}$, a characteristic set for $D$.

To establish Proposition~\ref{disc-struct} we need a key preliminary Lemma:
\begin{lem}\label{almost_all_periodic}
All but finitely many $\ZZ$-chains in $D$ are periodic.
\end{lem}

\begin{proof}
Suppose toward a  contradiction that there are infinitely many non-periodic $\ZZ$-chains in $D$. If there are sequences in $\{\simga_1, \ldots, \sigma_k\}$ of different lengths, then if $m$ is a common multiple of the lengths of all the $\simga_i$'s, we may replace each $\sigma_i$ with a suitable number of concatenations with itself which has length $m$, and thus without loss of generality each $\sigma_i$ has the same length $m$. There is also no harm in assuming that the set $\{\simga_1, \ldots, \sigma_k\}$ is closed under cyclic permutations.

Fix some $\ZZ$-chain $Z$ which is not periodic. Then there is some $a \in Z$ such that $\mc{Z}_{\geq}(a)$ is not periodic. By Proposition~\ref{periodicconc}, $\mc{Z}_{\geq}(a)$ is eventually periodic. We may assume that $a$ is maximal in the sense that if $a' \in Z$ and $a' > a$, then $\mc{Z}_{\geq}(a')$ is periodic. Let $\tau$ be the $\omega$-chain $\gamma_D(a), \gamma_D(S_D(a)), \gamma_D(S_D^2(a)), \ldots$ of successive elements of $D'$ beginning at $\gamma_D(a)$. Then there is some $d_Z \in D'$ and some $i_Z \in \{1, \ldots, k\}$ such that $\tau = d_Z \sigma_{i_Z}^\omega$ (the concatenation of $d_Z$ followed by $\omega$ copies of $\sigma_{i_Z}$), since by our maximality assumption on $a$, the $\omega$-chain of successive elements of $D'$ which begins at $\gamma_D(S_D(a))$ must be periodic.

By the assumption that there are infinitely many non-periodic $\ZZ$-chains and the Pigeonhole Principle, there is a single $d \in D'$ and a single $i \in \{1, \ldots, k\}$ such that there are infinitely many non-periodic $\ZZ$-chains $Z$ such that $d_Z = d$ and $i_Z = i$. Let $\sigma_i = \sigma_i^- e$ where $\sigma_i^-$ is the prefix consisting of the first $m-1$ elements of $\sigma_i$ and $e$ is the final element of $\sigma_i$. Then $e \neq d$, since otherwise

$$d \sigma_i^\omega = d \sigma_i^- e \sigma_i^- e \ldots$$

would be an $m$-periodic sequence, contrary to our assumption.

Now let $\sigma = d \sigma_i$. Since $d \neq e$, the sequence $\sigma$ cannot be a subsequence of any $m$-periodic $\omega$-chain. Thus there are infinitely many $a \in P_{\sigma}$ such that $a$ is the last element of its $\ZZ$-chain which intersects $P_{\sigma}$. Therefore $P_{\sigma}$ is an infinite definable discrete subset of $D$ and there are infinitely many elements $a \in P_{\sigma}$ such that $S_{P_\sigma}(a) - a \gg D'$, but this contradicts Corollary \ref{fin-big}.

\end{proof}

 \begin{proof}
 \emph{(of Proposition~~\ref{disc-struct})}

By Proposition \ref{periodicconc}, for every narrow definable discrete set $D$ which is bounded below, there is some $k \in \omega$ and finite sequences $\sigma_1, \dots, \sigma_k$ from $D'$ so that $\{\sigma_1, \dots, \sigma_k\}$ is a characteristic set for $D$.  Let $k(D)$ be the minimum number such that there is a characteristic set $\{\sigma_1, \ldots, \sigma_{k(D)}\}$ for $D$. Without changing the number $k(D)$ of sequences in such a set, we may further assume that all the sequences $\sigma_i$ have the same length (arguing as in the first paragraph of the proof of Lemma~\ref{almost_all_periodic}). We also note that by minimality of $k(D)$, no two sequences $\sigma_i$ and $\sigma_j$ for $1 \leq i < j \leq k(D)$ are cyclic permutations of one another. In case $D$ is finite, we define $k(D) = 0$, and whenever $D$ is infinite, $k(D) > 0$.

Our proof is by induction on $k(D)$.  We assume that the conclusion of Proposition~\ref{disc-struct} holds for all  narrow discrete sets $E$ which are bounded below and for which $k(E) < k(D)$.  We prove the conclusion of Proposition \ref{disc-struct} for $D$. Note that the base case is when $k(D) = 0$, which holds if and only if $D$ is finite, in which case the conclusion of Proposition~\ref{disc-struct} is trivial.

Now suppose that $k(D) > 0$. Fix a a characterisitc set$\{\sigma_1, \ldots, \sigma_{k(D)}\}$ for $D$ with all $\sigma_i$ of the same length, none of which is a cyclic permutation of the other. We apply Proposition \ref{struct-def} to the subset of $D$ defined by $P_{\sigma_1}$ to obtain a finite collection of closed, pairwise-disjoint intervals $\mc{C}_0 = \{C_1, \ldots, C_n\}$ such that $P_{\sigma_1} \subseteq \bigcup{1 \leq i \leq n} C_i$ and for each $i \in \{1, \ldots, n\}$,

\begin{enumerate}
\item $P_{\sigma_1}$ is cofinal and coninitial in every $\Z$-chain contained in $D \cap C_i$,
\item $P_{\sigma_1}$ is cofinal in every $\omega$-chain contained in $D \cap C_i$, and
\item $P_{\sigma_1}$ is coinitial in every $\omega^*$-chain contained in $D \cap C_i$.
\end{enumerate}

Since the $\Z$-, $\omega$-, and $\omega^*$-chains in (1), (2), and (3) must be eventually $\sigma_i$-periodic for some $i$, and by our assumptions that the $\sigma_i$'s are all of the same length and not cyclic permutations of one another, it follows that the chains in (1), (2), and (3) are all eventually $\sigma_1$-periodic.

The conclusion of Proposition \ref{struct-def} allows for some intervals $C_i$ which intersect $P_{\sigma_1}$ in only one point; by discarding such $C_i$, we may further assume that every set $C_i \cap P_{\sigma_1}$ is infinite, and that

\begin{enumerate}
\setcounter{enumi}{3}
\item $P_{\sigma_1} \setminus \bigcup_{C_i \in \mc{C}_0} C_i$ is finite.
\end{enumerate}


The intervals in $\mc{C}_0$ may not be $\simga_1$-intervals for either one of two reasons.  First, it may be the case that for some $W$, an  eventually $\sigma_1$-periodic but not $\sigma_1$-periodic $\Z$-chain of $D$, we have that $W \subseteq I$ for some $I \in \mc{C}_0$.  But by Lemma \ref{almost_all_periodic} there are only finitely many such chains.  Thus we can rectify this problem by partitioning the elements of $\mc{C}_0$ into subintervals so that no such $W$ is contained in any of the intervals.  Secondly  for some $J \in \mc{C}_0$ with $J=[a,b]$ or $J=[a, \infty)$ it may be the case that either $Z_{\geq}(a)$ or $Z_{\leq}(b)$ are only eventually $\sigma_1$-periodic and not $\sigma_1$-periodic.  But this is easily fixed by replacing $a$ by a suitably chosen $S^l(a)$ or replacing $b$ by a suitably chosen $S^{-l}(b)$.   With these adjustments $\mc{C}_0$ is a finite collection of $\sigma_1$-intervals.  

 
 Next let $D_1, \dots D_r$ be the finitely many  infinite $D$-convex components of $D \setminus \bigcup_{C_i \in \mc{C}_0} C_i$ (we may ignore any finite $D$-convex components). For each $i \in \{1, \ldots, r\}$, any $\omega$-chain of $D_i$ is also an $\omega$-chain of $D$, and by construction it is not eventually $\sigma_1$-periodic, hence it is $\sigma_j$-periodic for some $j \in \{2, \ldots, k(D)\}$, and likewise for $\omega^*$-chains; thus $\{\sigma_2, \dots, \sigma_{k(D)}\}$ is a characteristic set for each $D_i$, and so $k(D_i) < k(D)$. By the induction hypothesis we may find finite collections of intervals $\mc{C}_i$ for $1 \leq i \leq r$ so that each $I \in \mc{C}_i$ is a $\sigma_j$-interval for some $2 \leq j \leq k(D)$ and so that $D_i \setminus \bigcup_{I \in \mc{C}_i}I$ is finite.  Then setting $\mc{C}=\bigcup_{i =0}^r\mc{C}_i$ yields the desired collection of intervals.

\end{proof}

Now at last we are in a position to prove Proposition~\ref{weak-union-arith}.

 \begin{proof}  Let $D$ be definable, discrete, narrow, and bounded below.    By Proposition \ref{disc-struct}, there are finitely many finite sequences $\sigma_1, \ldots, \sigma_k$ from $D'$ such that $D$ is the union of a finite set $F$ plus finitely many sets of the form $D \cap I$ where $I$ is a $\sigma$-interval for some $\sigma \in \{\sigma_1, \ldots, \sigma_k\}$. Without loss of generality we may assume $D = D \cap I$ for some $\sigma$-interval $I$. 
 
 Also without loss of generality, $\sigma$ is of minimal length -- that is, for any finite sequence $\sigma'$ from $D'$ which is shorter than $\sigma$, the interval $I$ is not a $\sigma'$-interval. Let $k$ be the length of $\sigma$, and for $i \in \{0, 1, \ldots, k-1\}$, let $$D_i = \{a \in D \, : \, S_D^i(a) \in P_\sigma\}.$$ Then $D = D_0 \cup \ldots \cup D_{k-1}$, so it suffices to show that each set $D_i$ is pseudo-arithmetic. 
 
 \begin{claim}
 if $i \in \{0, \ldots, k-1\},$ then $S_{D_i}(a) = S^k_D(a).$
 \end{claim}
 
 \begin{proof}
 Suppose that $0 \leq i < k$, $a \in D$, $S^i_D(a) \in P_\sigma$, and $j \in \omega$ is minimal such that $j > i$ and $S^j_D(a) \in P_\sigma$. Since $|\sigma| = k$, it follows that $S^{i+k}_D(a) \in P_\sigma$, so $j \leq i + k$. If $j - i < k$, then the infinite sequence $\gamma_D(a), \gamma_D(S_D(a)), \ldots, \gamma_D(S^\ell_D(a)), \ldots$ would be $(j-i)$-periodic, and thus $D \cap I$ would be $\sigma'$-periodic where $\sigma'$ is a subsequence of $\sigma$ of length $j-i$, contradicting the minimality of $k$. Therefore $j = i + k$, and the Claim follows.
 \end{proof}
 
 By the Claim, if $\sigma = (d_1, \ldots, d_k)$ and $\eta = \sum_{i=1}^k d_i$, then $D_i$ is $\eta$-pseudo-arithmetic, as we wanted.
\end{proof}

\subsection{Deducing Theorem~\ref{union-arith} from Proposition~\ref{weak-union-arith}}

In this section we show how Theorem~\ref{union-arith} follows from Proposition~\ref{weak-union-arith}.    This will be achieved by showing that after partitioning and performing simple definable transformations on the discrete set $D$ we may reduce to the situation of discrete sets of the  type occurring in Proposition~\ref{weak-union-arith}.  

Throughout the subsection we fix $D\subseteq R$ a definable discrete set. Note that Assumption~\ref{3.1} of the previous subsection is no longer in effect, so $D$ is not necessarily bounded below.  But as noted in Proposition~\ref{sat_assumption}  in this subsection we still  maintain:
\begin{ass}  $\mc{R}$ is $\omega$-saturated.
\end{ass}

The following two Lemmas are immediate.  Recall that if $D$ is discrete and $a \in R$ then $D-a=\{b-a : b \in D\}$ and $-D=\{-b: b \in D\}$.

\begin{lem}\label{unif-move} \begin{enumerate} \item If $D$ is a definable discrete set and $a \in R$ then $\gamma_D(b)=\gamma_{D-a}(b-a)$ for any $b \in D$ and hence $D'=(D-a)'$.   In particular if  $D$ is narrow so is $D-a$.  Also if $D$ is $\eta$-pseudo-arithmetic then so is $D-a$.

\item If $D$ is definable and discrete, $a \in D$, and $S_D(a)=b$ then $S_{-D}(-b)=-a$ and $\gamma_{-D}(-b)=\gamma_D(a)$.  Hence $D'=(-D)'$.   In particular if $D$ is narrow so is $-D$ and if $D$ is $\eta$-pseudo-arithmetic then so is $-D$.
\end{enumerate}
\end{lem}

\begin{lem}\label{moves}
\begin{enumerate}
\item If $D$ is the finite union of points and definable discrete sets each of which is piecewise pseudo-arithmetic then $D$ is piecewise pseudo-arithmetic.

\item If $D$ is piecewise pseudo-arithmetic and $a \in R$ then $D-a$ is also piecewise pseudo-arithmetic;

\item $D$ is piecewise pseudo-arithmetic then so is $-D$.

\end{enumerate}

\end{lem}

\begin{prop}\label{reduce-union-arith}  To show that $D$ is piecewise pseudo-arithmetic it suffices to show that any definable discrete $E$ which is bounded below is a finite union of points and definable discrete sets $E_i$ which are narrow.
\end{prop}

\begin{proof}  Let $D$ be an arbitrary definable discrete set.   Let $D_0=D \cap R^{\geq 0}$ and let $D_1=D\setminus D_0$.  By Lemma~\ref{moves}(1) it suffices to show that $D_0$ and $D_1$ are piecewise pseudo-arithmetic. 
 By Lemma~\ref{moves}(3) to show that $D_1$ is piecewise pseudo-arithmetic if suffices to show that $-D_1$ is piecewise pseudo-arithmetic.  Thus we may assume that $D$ is bounded below.   By assumption $D$ is a finite union of points and  definable discrete sets $E_i$ for $1 \leq i \leq r$ each of which is narrow.   By Lemma~\ref{moves}(1) it suffices to show that  $E_i$ is piecewise pseudo-arithmetic for each $1 \leq i \leq r$. Fix $1 \leq i \leq r$.   But Proposition~\ref{weak-union-arith} applies to $E_i$ and so $E_i$ is piecewise pseudo-arithmetic.  Hence $D$ is piecewise pseudo-arithmetic.
\end{proof}

Thus for the rest of this subsection we work with:

\begin{ass} $D$ is bounded below.
\end{ass}

  We aim to show that $D$ is a finite union of definable subsets $D_i$ which are narrow.  In order to establish this we first show that $D$ can be decomposed into finitely many definable subsets which can be considered highly ``uniform''.   To this end we need some definitions (recall the definition of $D$-convex--see Definition~\ref{dconvex}).

\begin{definition}
 A \emph{finite convex partition of $D$} is a finite partition $D = D_0 \cup \ldots \cup D_m$ such that each $D_i$ is $D$-convex.  We say that the partition is {\em definable} if each $D_i$ is.
\end{definition}
  \begin{definition}  A definable discrete set $D$ is {\em uniformized} if either $D$ is a singleton or $D$ is infinite  and there is $N(D) \in \omega$ so that if $C \subseteq D$ is $D$-convex and of size at least $N(D)$ then for all $\delta \in D'$ there is $a \in C$ with $\gamma_D(a)=\delta$.
\end{definition}

Our first goal is to prove:

\begin{prop}\label{uniformized}  If $D$  is a definable discrete set which is bounded below, then $D$ has a definable finite convex partition into subsets $D_1, \dots, D_n$  so that each $D_i$ is uniformized.
\end{prop}

We begin by establishing the necessary results to show that $D$ is a finite union of uniformized sets.  The following is immediate:

\begin{lem}\label{part-diff} Given $D = D_1 \cup \ldots \cup D_m$ a finite convex partition of $D$.  If $a \in D_i$ for some  $i \in \{1, \ldots, m\}$ and $a$ is not maximal in $D_i$ then $\gamma_{D_i}(a)=\gamma_D(a)$.  In particular we have that $D'_i \subseteq D'$.   
\end{lem}

Notice  that if $E \subseteq D$ is definable and $D$-convex it is potentially the case that  $\gamma_E(\max(E))$ is not defined while $\gamma_D(\max(E))$ is defined and thus it may not be the case that $E'=\{\gamma_D(e) : e \in E\}$.

To begin with,  if $\delta \in D'$ and  $\gamma_D^{-1}(\delta)$ is finite then we can take a definable finite convex partition of $D$ so that if $\gamma_D(a)=\delta$ then $a \notin D_i$ for any infinite $D_i$ in the partition.   Thus by Lemma~\ref{part-diff} $\delta \notin D_i'$ for any infinite $D_i$ in the partition.  Thus, as $D'$ is finite, we may make the following assumption:

\begin{ass}
\label{infinite_delta}
For each $\delta \in D'$, there are infinitely many elements $a \in D$ with $\gamma_D(a) = \delta$.
\end{ass}

We need a definition.

\begin{definition} For $X \subseteq D$ and $\detla_1 \dots \delta_l \in D'$ we say that $X$ is an {\em anti-$\{\delta_1, \dots, \delta_l\}$ set} if for no $a \in X$ is $\gamma_D(a)=\detla_i$ for $1 \leq i \leq l$. If $l = 1$, we omit the braces and call this an \emph{anti-$\delta_1$ set}. A subset $E \subseteq D$ is called an \emph{anti-$\{\delta_1, \ldots, \delta_l\}$-component} if $E$ is a $D$-convex,   anti-$\{\delta_1, \dots, \delta_l\}$ set, and
if $F \subseteq D$ is $D$-convex with $E \subset F$ then $F$ is not an anti-$\{\delta_1, \ldots, \delta_l\}$ set.
\end{definition}

To establish Proposition \ref{uniformized} we first show that for any $\{\delta_1, \dots, \delta_l\} \subseteq D'$ there is a finite bound on the size of any {\em finite} anti-$\{\delta_1, \dots, \delta_l\}$ component (Lemma \ref{antigapsize}).  In turn we will use this to establish that after potentially partitioning $D$ into definable $D$-convex subsets we can reduce to the case that for any $\delta \in D'$ there are no infinite anti-$\delta$ components (Lemma \ref{antipart}).  Once these two facts are established Proposition \ref{uniformized} is immediate.

 For the finite case we need a simple fact.  

\begin{lem}  Let $\mc{R}$ be an OAG and let $\delta_1, \dots, \delta_l \in R^{>0}$.  If $Z \subseteq \N^l$ is infinite then 
\[S=\left\{ m_1 \detla _1 + \dots + m_l\delta_l : (m_1, \dots, m_l) \in Z\right\}\] is infinite.
\end{lem}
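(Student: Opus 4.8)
The plan is to deduce this from Dickson's lemma, i.e.\ from the fact that $\N^l$ with the coordinatewise partial order $\leq$ is a well-quasi-order (equivalently, every infinite sequence in $\N^l$ has an infinite non-decreasing subsequence). First I would record the only algebraic input needed: the linear map $\sigma \colon \N^l \to R$ defined by $\sigma(m_1,\dots,m_l) = m_1\delta_1 + \dots + m_l\delta_l$ is \emph{strictly} monotone for $\leq$. Indeed, if $\overline m \leq \overline m'$ coordinatewise with $\overline m \neq \overline m'$, then $\sigma(\overline m') - \sigma(\overline m) = \sum_{i=1}^{l} (m_i' - m_i)\delta_i$ is a sum of finitely many elements of $R$ that are $\geq 0$, and at least one summand is $> 0$ (because $\delta_i > 0$ and, $\mc{R}$ being an ordered group and hence torsion-free, $k\delta_i > 0$ for every positive integer $k$); so $\sigma(\overline m) < \sigma(\overline m')$.

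Then I would enumerate $Z$ as an infinite sequence $\overline m^{(0)}, \overline m^{(1)}, \dots$ of pairwise distinct tuples, apply Dickson's lemma to extract an infinite non-decreasing subsequence $\overline m^{(k_0)} \leq \overline m^{(k_1)} \leq \dots$, and note that since its terms are distinct and pairwise comparable, it is in fact strictly increasing in the coordinatewise order. Applying $\sigma$ together with the strict monotonicity above yields $\sigma(\overline m^{(k_0)}) < \sigma(\overline m^{(k_1)}) < \dots$, an infinite strictly increasing sequence of elements of $S$, so $S$ is infinite. (Equivalently, one can phrase the argument via fibers: each $\sigma^{-1}(s)$ is an antichain in $(\N^l,\leq)$ by strict monotonicity, and antichains in $\N^l$ are finite by Dickson's lemma, so a finite $S$ would make $Z = \bigcup_{s \in S}\sigma^{-1}(s)$ a finite union of finite sets.)

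I do not expect a real obstacle here; the statement is elementary. The one point worth flagging is that since $\mc{R}$ may be non-Archimedean, one cannot run the naive counting argument that bounds the coefficients $m_i$ by comparing $m_i\delta_i$ with $\sigma(\overline m)$ — the $\delta_i$ need not lie in the Archimedean class of $\sigma(\overline m)$ — which is precisely why a purely combinatorial fact about $\N^l$ (Dickson's lemma), rather than a counting argument inside a fixed Archimedean class, is the appropriate tool.
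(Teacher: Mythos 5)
Your proof is correct, but it takes a genuinely different route from the paper. The paper argues by induction on $l$: writing the classes so that $\delta_{l+1}$ is the largest, it projects $Z$ onto the last coordinate; if that projection is finite it fixes a value and invokes the induction hypothesis, and if it is infinite it observes that every sum with $m_{l+1}\neq 0$ is Archimedean equivalent to $\delta_{l+1}$, so $S$ is cofinal in the class $[\delta_{l+1}]$ and hence infinite. You instead use a purely combinatorial fact about $(\N^l,\leq)$ -- Dickson's lemma / well-quasi-ordering -- together with the (correct) observation that $\sigma(\overline{m})=\sum_i m_i\delta_i$ is strictly increasing for the coordinatewise order, since positive integer multiples of positive elements are positive in any OAG; an infinite chain in $Z$ then gives infinitely many distinct values in $S$, and your fiber reformulation (fibers of $\sigma$ are antichains, hence finite) is an equally valid packaging. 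Your argument avoids both the induction and any mention of Archimedean classes, which makes it shorter and arguably more transparent; the paper's argument stays within the Archimedean-class machinery it is already using throughout Section 2 and needs no external combinatorial lemma. Your closing remark about why a naive coefficient-bounding argument fails in the non-Archimedean setting is apt, and both proofs (like the paper's, via Levi's theorem) extend to arbitrary torsion-free Abelian groups by first choosing a compatible order.
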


\begin{proof}  We induct on $l$.  If $l=1$ the result is immediate.   Thus suppose we have $\delta_1 < \dots <\delta_{l+1}$.  Let $\pi_{l+1} : \N^{l+1} \to \N$ be the projection onto the last coordinate and first  suppose that $\pi_{l+1}[Z]$ is finite.   
Let $\pi_{(1 \dots l)}: \N^{l+1} \to \N^l$ be the projection onto the first $l$ coordinates.  There is $m^* \in \pi_{l+1}[Z]$ whose pre-image is infinite.  Let $Z_0:=\pi_{(1\dots l)}[\pi_{l+1}^{-1}(m^*)]$.   Then by induction 
\[\{ m_1 \detla _1 + \dots + m_l\delta_l+m^*\delta_{l+1} : (m_1 \dots m_l) \in Z_0\} \subseteq S\] is infinite.  

Therefore we may assume that $\pi_{l+1}[Z]$ is infinite.  As $\detla_{l+1}$ is maximal among the $\detla_i$'s any sum of the form 
$m_1\detla_1 + \cdots + m_{l+1}\detla_{l+1}$ with $m_{l+1} \not= 0$ is Archimedean equivalent to $\detla_{l+1}$ (recall all the $\detla_i$'s are positive and $Z \subseteq \N^{l+1}$).  As $\pi_{l+1}[Z]$ is infinite, the set $S$ must be cofinal in the Archimedean class $[\detla_{l+1}]$ of $\delta_{l+1}$ and thus infinite.
\end{proof}

We note in passing that by a theorem of Levi, an Abelian group is orderable just in case it is torsion-free \cite{Levi}, and thus the conclusion of the previous lemma holds for any torsion-free Abelian group.

\begin{lem}\label{antigapsize}  For any $\detla_1 \dots \delta_l \in D'$ there is $N \in \omega$ so that if $X \subseteq D$ is a finite anti-$\{\delta_1\dots \delta_l\}$-component, then $|X|< N$.
\end{lem}

\begin{proof}
Suppose the lemma is false and take $\detla_1, \ldots, \delta_l$ so that $D$ has arbitrarily large finite anti-$\{\delta_1, \ldots, \delta_l\}$-components. Also let $D' = \{\delta_1, \ldots, \delta_n\}$ for $n \geq l$ (recalling that $D'$ must be finite). Let 
\[D_0=\{x \in D: \gamma_D(x) \in \{\delta_1, \ldots, \delta_l\} \}.\] 

Let $E$ be a finite anti-$\{\delta_{1} \dots \delta_{l}\}$-component of $D$ and further assume that $\min{(E)}>\min(D)$.  Let $M=|E|$ and let $a$ be the minimal element of $E$.  Now note that $\gamma_{D_0}(S^{-1}_D(a))=m_1\delta_1 + \cdots m_n\delta_n$ where $m_1 + \cdots +m_n=M+1$. 
As the size of finite  anti-$\{\delta_{1} \dots \delta_{l}\}$-components of $D$  is unbounded there must be an infinite $Z \subseteq \N^n$ so that if $(m_1 \dots m_n) \in Z$ then $m_1 \delta_1 + \dots + m_n\delta_n \in D_0'$.  By the previous lemma $D_0'$ is infinite, which is impossible since $D_0$ is a definable discrete set.

\end{proof}

Thus we have a uniform bound on finite anti-$\{\delta_1, \dots, \delta_l\}$ components.  To establish the fact on infinite anti-$\detla$ components we need a simple consequence of Theorem~\ref{descending_prime}.

\begin{lem}\label{arch-dif}  If $D_0$ and $D_1$ are infinite discrete definable sets, then it cannot be the case that 
$D_0' \ll D_1'$.
\end{lem}

\begin{proof}  Suppose otherwise.  By Lemma~\ref{unif-move} we may without loss of generality assume that $D_i \subseteq R^{\geq 0}$ and $0 \in D_i$ for $i \in \{0,1\}$.  Set $D_0'=\{\eta_1, \dots, \eta_m\}$.   As $D_0' \ll D_1'$ and $0$ is the minimal element of $D_0$ it follows that if $d$ is among the first $\omega$ many  elements of $D_0$ then $d=l_1 \eta_1 + \cdots +\l_m\eta_m$ for some $l_1 \dots l_m \in \NN$.  In particular $d<D_1'$.  Hence by compactness find $\varepsilon \in R$ with $\varepsilon < D_1'$ so that $[0, \varepsilon) \cap D_0$ is infinite.  Then $(0, \varepsilon) \cap D_0 < D_1'$, violating Theorem \ref{descending_prime}.

\end{proof}

\begin{lem}\label{antipart}  There is a definable finite convex partition  $D_0 \cup \ldots \cup D_m$ of $D$ so  that for every $D_i$ and every $\delta \in D_i'$ the set $D_i$ has no infinite anti-$\delta$-component.

\end{lem}

\begin{proof} We proceed by induction on $n=|D'|$. If $n = 1$ then the result is trivial. Assume that $D'=\{\delta_1, \dots, \delta_{n+1}\}$ where the $\delta_i$'s are listed in increasing order, and suppose that we have established the lemma for all definable discrete sets which are bounded below and whose difference sets have size at most $n$.

We will do a sub-induction on $i \in \{1, \ldots, n+1\}$, in reverse order (beginning with $i = n+1$), to show the following:

$(*)_i$ There is a definable finite convex partition of $D = D_0 \cup \ldots \cup D_m$ such that for every $1 \leq \ell \leq m$ and every $j \in \{i, i+1, \ldots, n+1\}$, either $\delta_j \notin D'_l$, or else $D_l$ has no infinite anti-$\delta_j$-component.

Notice that $(*)_1$ is our desired result.

First we will establish the base case, that is, $(*)_{n+1}$. If there are only finitely many infinite anti-$\delta_{n+1}$-components in $D$, then $(*)_{n+1}$ follows quickly: there is a definable finite convex partition $D = D_0 \cup \ldots \cup D_m$ such that for each $i,$ either (i) $D_i$ is an infinite anti-$\delta_{n+1}$-component, or else (ii) $D_i$ has no infinite anti-$\delta_{n+1}$-components.

Thus we may assume that there are infinitely many infinite anti-$\delta_{n+1}$-components.

Let \[E=\{a \in D : a \text{ lies in an infinite anti-}\delta_{n+1} \text{-component}\}\] which is definable by Lemma \ref{antigapsize}.  Let 
\[E_0=\{a \in E : \gamma_D(S^{-1}_D(a))=\detla_{n+1}\},\] or in other words, $E_0$ is the set of all initial points of infinite anti-$\delta_{n+1}$-components.   Notice that as we assume that there are infinitely many infinite anti-$\delta_{n+1}$-components $E_0$ is infinite.

 By our assumptions we may find   $k \in \{1, \dots, n+1\}$ minimal so that there are infinitely many infinite anti-$\{\delta_k \dots \delta_{n+1}\}$-components. Notice that $k = 1$ is impossible and thus $k \geq 2$.

Pick $c,d \in D$ such that $F:=[c,d] \cap D$ is infinite and $F' \subseteq \{\delta_1, \dots, \delta_{k-1}\}$.  
Notice that by the minimality of $k$ and the saturation of $\mc{R}$, for infinitely many anti-$\{\delta_k \dots \delta_{n+1}\}$-components $C$, there are infinitely many $a \in C$ with $\gamma_D(a)=\delta_{k-1}$.  

Thus there are infinitely many $a \in E_0$ such that $S_{E_0}(a) - a \gg \delta_{k-1}$, so by compactness we can pick $\zeta \gg \detla_{k-1}$ such that there are infinitely many $a \in E_0$ satisfying $S_{E_0}(a)-a>\zeta$.  Let
\[E_1=\{a \in E_0 : S_{E_0}(a)-a>\zeta\}.\]  $F$ and $E_1$ are infinite discrete sets so that $F' \ll E_1'$, contradicting  Lemma \ref{arch-dif}. This concludes the proof of $(*)_{n+1}$.

Now assume that $(*)_i$ holds for $i > 1$, and for the inductive step we need to show $(*)_{i-1}$.  After partitioning as in $(*)_i$, we may assume that in $D$ itself there are no infinite anti-$\delta_j$-components for any $j \geq i$. As in the proof of $(*)_{n+1}$, the case where  $D$ has only finitely many infinite anti-$\detla_{i-1}$-components is straightforward.  Thus we assume  that $D$ has infinitely many infinite anti-$\delta_{i-1}$-components. By further partitioning $D$ as necessary and applying induction on $|D'|$ we may assume that in any infinite anti-$\delta_{i-1}$-component of $D$, there are infinitely many points $a$ with  $\gamma_D(a)=\detla_{n+1}$.  Now set
\[E=\{ a \in D : a \text{ lies in an infinite anti-}\delta_{i-1} \text{-component}\}\] which, again, is definable by Lemma \ref{antigapsize}.  Set
\[E_0=\{a \in E : S^{-1}_D(a) \notin E\}.\]  Notice that $E_0$ is infinite and if $a \in E_0$ is not maximal then 
$S_{E_0}(a)-a\gg \delta_{n+1}$.  Now $D$ and $E_0$ are infinite discrete sets such that 
$D' \ll E_0'$, contradicting Lemma \ref{arch-dif}.  This completes the sub-induction and hence establishes the Lemma.
\end{proof}

\begin{proof}(Of Proposition \ref{uniformized})
Partition $D$ into $D_1 \dots D_m$ as in Lemma \ref{antipart}.  Now by Lemma \ref{antigapsize} for each $1 \leq j \leq m$ there is $N_j \in \omega$ so that for any $\delta_1 \ldots \delta_l \in D_j'$ if $X$ is an anti-$\{\delta_1 \ldots \delta_l\}$ component then $|X|<N_j$.  It follows that $D_j$ is uniformized.
\end{proof}

Next we need to establish that we may partition a discrete set, $D$, into finitely many definable subsets $D_i$  each of which is narrow.

To achieve this we first show:

\begin{prop}\label{arch-diff}  Suppose that $D$ is a uniformized then  $D$ is a finite union of definable discrete sets $D_1, \dots, D_m$ each of which is narrow.
\end{prop}

\begin{proof}

First we set some notation. If $E$ is a definable discrete set, let $l(E)$ be the (finite) number of distinct Archimedean classes represented in $E'$.  For ease of notation let $l=l(D)$ for our fixed discrete set $D$.  Let $K_1 < \ldots < K_l$ list all the Archimedean classes in $D'$ and let $M(D)$ be the cardinality of the largest $D$-convex subset of $D$ of the form 
$[a,b) \cap D$ with $a,b \in D$  so that for no $c \in [a,b) \cap D$ is $\gamma_D(c) \in K_l$.  As  $D$ is uniformized $M(D)$ is finite.

We prove the Proposition by induction on $l=l(D)$.  If $l=1$ the result is trivial, so suppose that $l>1$ and the conclusion of the Proposition holds  for all discrete sets $E$  which are bounded below and for which   $l(E)<l(D)$. We proceed by a sub-induction on $M(D)$.  Note that $M(D)=0$ is impossible if $l > 1$, so suppose that $M(D)\geq1$ and the proposition is true for all uniformized discrete sets $E$ which are bounded below, for which the Archimedean classes in $E'$ are among $K_1 \dots K_l$,  and are such that $M(E)<M(D)$.

Now define \[D_1=\{a \in D : \gamma_D(a) \in  K_l\} \cup \{\max(D)\}.\] 
(Where $\max(D)$ is added if and only if it exists.)

Since $M(D)$ is finite and $K_l$ is the maximal Archimedean class every element $\delta \in D_1'$ must lie in $K_l$.   Thus $D_1$ is a definable subset of $D$ so that all elements of $D_1'$ lie in the same Archimedean class.   

Now let $\tilde{D}=D \setminus D_1$.  If $a \in \tilde{D}$ and $\gamma_D(S_D(a)) \notin K_l$  then $\gamma_{\tilde{D}}(a)=\gamma_D(a)$ and so $\gamma_{\tilde{D}}(a) \in K_1 \cup \dots \cup K_l$.  Next suppose that $\gamma_D(S_D(a))\in K_l$.  As $D$ is uniformized and $l>1$ there is some $r \in \NN$ so that 
$\gamma_D(S_D^r(a)) \notin K_l$.  In particular as $K_l$ is the maximal Archimedean class it follows that $\gamma_{\tilde{D}}(a) \in K_l$.   Thus the Archimedean classes represented in $\tilde{D}'$ are among $K_1, \dots, K_l$.

We may  apply Proposition~\ref{uniformized} to $\tilde{D}$ and work within one of the $\tilde{D}$-convex pieces, so without loss of generality $\tilde{D}$ is uniformized and the Archimedean classes represented in $\tilde{D}'$ are still among $K_1 \dots K_l$ (which follows from Lemma~\ref{part-diff}). If $l(\tilde{D}) < l(D)$, we are done by induction.  Thus we may assume that all Archimedean classes $K_1 \dots K_l$ are represented in $\tilde{D}'$.

Suppose that $M(\tilde{D}) \geq M(D)$ and let this be witnessed by $[a,b) \cap \tilde{D}$ with $a,b \in \tilde{D}$.   Then $\gamma_D(b) \notin K_l$ and $b$ is not maximal in $D$.  Next suppose that $c\in [a,b) \cap D$ and $\gamma_D(c) \in K_l$.   As $D$ is uniformized and $l>1$ there is some minimal $r\in \NN$ so that $S_D^{-r}(c) \in [a, b) \cap \tilde{D}$.  But then $S_{\tilde{D}}(S^{-r}(c)) \in K_l$ contradicting the definition of $M(\tilde{D})$.  Hence for every $c \in [a,b) \cap  D$ we have $\gamma_D(c) \notin K_l$.   But then $[a,S_D(b))$ is of size at least $M(D)+1$ and $\gamma_D(c) \notin K_l$ for all $c \in [a,S_D(b))$, a contradiction.  Thus $M(\tilde{D})<M(D)$ and we are done by induction.
\end{proof}

The final remaining element needed in  order to establish Theorem~\ref{union-arith} is:

\begin{prop}\label{good-part}  If $D$ is a definable discrete set which is bounded below then $D$ is a finite union of definable, narrow, discrete sets.
\end{prop}

\begin{proof} First apply Proposition~\ref{uniformized} to $D$.  Thus we can write $D=\bigcup_{i=1}^nD_i$ where the $D_i$ form a definable convex partition of $D$ and each $D_i$ is uniformized.   Fix $i$, as $D_i$ is bounded below  apply Proposition~\ref{arch-diff} to write $D_i$ as a finite union of definable sets which are narrow.   But then $D$ is a finite union of definable, narrow, discrete sets.

\end{proof}

\begin{proof}(Of Theorem~\ref{union-arith})  The theorem follows immediately from 
  Proposition~\ref{reduce-union-arith} and  Proposition~\ref{good-part}.
\end{proof}

\subsection{Proof Theorem \ref{def_in_G} and an example}\label{refined}

In this final subsection we prove Theorem \ref{def_in_G} and finish the subsection highlighting this theorem's limits.    As in the previous section we work in a structure $\mc{R}=\langle R, +, <, \dots\rangle$ but we no longer assume any saturation for $\mc{R}$.

We need some basic results about the structure of pseudo-arithmetic sets.

\begin{lem}\label{trunc} Suppose that $E_0$ and $E_1$ are both definable $\eta$-pseudo-arithmetic sets and that $0$ is the least element of both $E_0$ and $E_1$. Then either $E_0$ is an initial segment of $E_1$ or $E_1$ is an initial segment of $E_0$.
\end{lem}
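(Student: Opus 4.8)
The plan is to study the definable set
\[ A = \{ x \in R : E_0 \cap [0,x] = E_1 \cap [0,x] \}, \]
which is first-order definable (with the parameters defining $E_0$ and $E_1$), contains $0$ since $\min E_0 = \min E_1 = 0$, and is downward closed as a subset of $R^{\geq 0}$. If $A$ is unbounded above, then $E_0 = E_1$ and there is nothing to prove, so assume $A$ is bounded above and let $c = \sup A$, which exists by definable completeness and satisfies $c \geq 0$. Taking a union over $x \in [0,c)$ shows $E_0 \cap [0,c) = E_1 \cap [0,c)$, so $c \in A$ precisely when $c$ lies in $E_0$ if and only if it lies in $E_1$. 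I would split into two cases accordingly.

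First suppose $c \in A$, and set $E^* = E_0 \cap [0,c] = E_1 \cap [0,c]$, a nonempty definable closed bounded set, which therefore has a greatest element $m$ (using Fact~\ref{closedd} and definable completeness). I would then analyse the four possibilities for whether $m$ is maximal in $E_0$ and whether it is maximal in $E_1$. If $m$ is maximal in both, then $E_0 = E^* = E_1$ and we are done. If $m$ is maximal in exactly one of them, say in $E_0$ but not $E_1$, then $E_0 = E^*$ while every element of $E_1$ above $m$ exceeds $c$ (being at least $S_{E_1}(m) = m + \eta$, since $E_1' = \{\eta\}$), so $E_0 = \{x \in E_1 : x \le m\}$ is an initial segment of $E_1$; the symmetric case is analogous. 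Finally, if $m$ is maximal in neither, then $S_{E_0}(m) = m + \eta = S_{E_1}(m)$ (using $E_0' = E_1' = \{\eta\}$), this common value exceeds $c$, and neither $E_0$ nor $E_1$ meets $(c, m+\eta)$; hence $E_0 \cap [0, m+\eta] = E^* \cup \{m+\eta\} = E_1 \cap [0,m+\eta]$, so $m + \eta \in A$, contradicting $m + \eta > c = \sup A$.

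Now suppose $c \notin A$, so exactly one of $E_0, E_1$ contains $c$; say $c \in E_0$ and $c \notin E_1$ (the other case is symmetric). Then $c > 0$ (if $c = 0$ then $c \in A$, since $0$ lies in both sets), so $E^* := E_0 \cap [0,c) = E_1 \cap [0,c)$ is nonempty. By Fact~\ref{closedd}, $E_0$ has no accumulation point, so $\sup E^* \neq c$; thus $m' := \sup E^* < c$, and $m' \in E^*$ because $E_0$ is closed, so $m' = \max E^*$. Since $E_0 \cap (m', c) = \emptyset$ and $c \in E_0$, we get $S_{E_0}(m') = c$, hence $c - m' = \eta$ as $E_0' = \{\eta\}$. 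In $E_1$, if $m'$ were not maximal then $S_{E_1}(m') = m' + \eta = c$, forcing $c \in E_1$, a contradiction; so $m'$ is maximal in $E_1$, whence $E_1 = E^* = E_0 \cap [0, m']$ is an initial segment of $E_0$.

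I expect the main obstacle to be the bookkeeping in the $c \in A$ case — keeping straight which of $E_0, E_1$ plays which role across the four subcases — rather than anything conceptually deep. The one genuinely non-elementary input is Fact~\ref{closedd}: in the $c \notin A$ case it is exactly what rules out $\sup E^*$ sitting on the boundary point $c$, and together with definable completeness it is what guarantees that the various maxima invoked throughout actually exist in $R$.
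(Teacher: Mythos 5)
Your proof is correct, but it is organized quite differently from the paper's. The paper argues in two stages: first that one of the sets contains the other (by taking the least element $\alpha$ of $E_0 \setminus E_1$, passing to its predecessor $\beta$ in $E_0$, and noting that $S_{E_1}(\beta)$ would have to equal $\beta+\eta=\alpha$ unless $\beta$ is maximal in $E_1$), and then that the contained set is an initial segment (by a similar least/greatest-element argument). You instead run a single sweep on the definable agreement locus $A$, take $c=\sup A$ via definable completeness, and split on whether the supremum is attained. The core mechanism is the same in both proofs -- at the first point of divergence the $\eta$-pseudo-arithmetic condition forces both successors to be $+\eta$, so divergence can only occur where one set terminates -- and both rely on the same inputs (definable completeness plus Fact~\ref{closedd} to extract maxima and to rule out accumulation at $c$, which you correctly flag as the non-elementary ingredient). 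One remark: given that $E_0\triangle E_1$ is itself a definable discrete set with a least element when nonempty, your Case 1 ($c\in A$ with $A$ bounded) is actually vacuous except for the contradiction subcase, and the "exactly one maximal" subcase is inconsistent with $c=\sup A$; your reasoning inside those subcases is still locally sound and reaches the desired conclusion, so this is harmless redundancy rather than an error. Your route trades the paper's two separate inductive-flavored arguments for one supremum argument plus a case analysis; neither is clearly shorter, but yours makes the role of definable completeness more visible.
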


\begin{proof}  We first show that either $E_0 \subseteq E_1$ or $E_1 \subseteq E_0$.   Suppose that $E_0 \not\subseteq E_1$.  Let $\alpha$ be the least element of $E_0 \setminus E_1$.
Note that $\alpha$ must have a predecessor, $\alpha-\eta$, in $E_0$ (and hence $\alpha-\eta$ is  also in $E_1$).
If $S_{E_1}(\alpha-\eta)$ is defined then $S_{E_1}(\alpha-\eta)=\alpha-\eta+\eta=\alpha$ and $\alpha \in E_1$.  Hence $\alpha-\eta$ must be the maximal element of $E_1$.  If $E_1$ is not a subset of $E_0$ pick $\gamma$ maximal in $E_1$ not in $E_0$.  Then $\gamma<\alpha-\eta$ and so $\gamma+\eta \in E_1$ and notice that also $\gamma+\eta \in E_0$.  But then immediately $\gamma \in E_0$, a contradiction.  Hence $E_1 \subseteq E_0$.

Suppose that  $E_1 \subseteq E_0$, we show that $E_1$ is an initial segment of $E_0$. Let  $\alpha \in E_0$ and $\alpha \leq \max{E_1}$.  If $\alpha=0$ then $\alpha \in E_1$ otherwise pick $\beta \in E_1$ with $\beta$ maximal so that $\beta < \alpha$.  Notice that $\beta$ is not maximal in $E_1$.   As both $E_0$ and $E_1$ are $\eta$-pseudo-arithmetic   $S_{E_1}(\beta)\leq\alpha$  but $S_{E_1}(\beta)<\alpha$ violates the maximality of $\beta$ and hence $S_{E_1}(\beta)=\alpha$ and so $\alpha \in E_1$.  So $E_1$ is an initial segment of $E_0$.

\end{proof}

As an immediate consequence of Lemma \ref{arch-dif} we have:

\begin{lem} \label{arith-arch} If $E_1$ is a definable $\eta_1$-pseudo-arithmetic set and $E_2$ is a definable $\eta_2$-pseudo-arithmetic set then $\eta_1$ and $\eta_2$ are Archimedean equivalent.
\end{lem}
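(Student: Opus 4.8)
The plan is to derive the statement directly from Lemma~\ref{arch-dif}, with essentially no extra work. First I would argue, toward a contradiction, that $\eta_1$ and $\eta_2$ lie in distinct Archimedean classes. Since the Archimedean classes of $\mc{R}^{>0}$ are linearly ordered, one of these classes is strictly below the other; relabeling if necessary, assume $[\eta_1] < [\eta_2]$, that is, $\eta_1 \ll \eta_2$. (Here we use that $\eta_1$ and $\eta_2$ are positive elements of $R$, so that their Archimedean classes are defined: indeed, for any non-maximal $a$ in a discrete set $E$ we have $S_E(a) > a$, hence $\gamma_E(a) = S_E(a) - a > 0$, so $E' \subseteq (0,\infty)$; and a set with $|E'| = 1$ has at least two elements, so $E_1'$ and $E_2'$ are genuinely nonempty.)

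Now, because $E_1$ is $\eta_1$-pseudo-arithmetic and $E_2$ is $\eta_2$-pseudo-arithmetic, we have $E_1' = \{\eta_1\}$ and $E_2' = \{\eta_2\}$ by definition. The relation $X \ll Y$ for subsets of $\mc{R}^{>0}$ asks only that $x \ll y$ for all $x \in X$, $y \in Y$, which in the case of the singletons $\{\eta_1\}$ and $\{\eta_2\}$ reduces precisely to $\eta_1 \ll \eta_2$. Thus $E_1' \ll E_2'$. But $E_1$ and $E_2$ are discrete definable sets, so this contradicts Lemma~\ref{arch-dif}. Hence $\eta_1$ and $\eta_2$ must be Archimedean equivalent.

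I do not expect any genuine obstacle here; the lemma really is an immediate corollary of Lemma~\ref{arch-dif}. The only point meriting a sentence of care is the one flagged above, namely that the two differences $\eta_1, \eta_2$ are well-defined positive elements (so that talking about their Archimedean classes makes sense and the singleton-vs-singleton instance of the $\ll$ relation is the expected one); everything else is bookkeeping.
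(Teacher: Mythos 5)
Your argument is exactly the paper's: the paper states this lemma as an immediate consequence of Lemma~\ref{arch-dif} with no further proof, and your write-up simply spells out that one-line deduction (WLOG $\eta_1 \ll \eta_2$, so $E_1' = \{\eta_1\} \ll \{\eta_2\} = E_2'$, contradicting Lemma~\ref{arch-dif}). The only caveat, which you inherit from the paper rather than introduce, is that Lemma~\ref{arch-dif} (via Lemma~\ref{smaller}) really concerns \emph{infinite} discrete definable sets, so the pseudo-arithmetic sets here are implicitly taken to be infinite.
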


\begin{lem}\label{rat} Suppose that $E_1$ is a definable $\eta_1$-pseudo-arithmetic set and $E_2$ is a definable $\eta_2$-pseudo-arithmetic set then 
there is $q \in \Q$ so that $\eta_2=q\eta_1$.
\end{lem}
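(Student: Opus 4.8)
The plan is to reduce to the case where $E_1$ and $E_2$ share a common least element, say $0$, since pseudo-arithmeticity is translation-invariant and we only care about the differences $\eta_1,\eta_2$. So assume $0 = \min E_1 = \min E_2$. By Lemma~\ref{arith-arch} we already know $\eta_1$ and $\eta_2$ are Archimedean equivalent, so fix $m,k \in \NN^{>0}$ with $k\eta_1 < m\eta_2$ and $k\eta_2 < m\eta_1$; the goal is to pin down the exact ratio. The key geometric picture is that $E_1$ looks like $\{0, \eta_1, 2\eta_1, \ldots\}$ (a genuine arithmetic progression up to its maximum, if it has one) and likewise $E_2 = \{0, \eta_2, 2\eta_2, \ldots\}$, so I want to compare how the two progressions interleave.

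The main step is to form a third definable pseudo-arithmetic set out of $E_1$ and $E_2$ and apply Lemma~\ref{trunc}. Specifically, consider the set $F$ obtained by taking the integer multiples of $\eta_1$ that also happen to be integer multiples of $\eta_2$ — more precisely, $F = \{ a \in E_1 : a \in E_2' \cdot \NN \text{ closes up}\}$; concretely one can take $F$ to be the definable closure-under-addition argument: look at $E_1 \cap E_2$. I claim $E_1 \cap E_2$ is again $\eta$-pseudo-arithmetic for some $\eta$ (it is definable and discrete, it contains $0$ as its least element, and a short argument using that both $E_1$ and $E_2$ are honest arithmetic progressions on an initial segment shows its successive differences are constant — indeed the difference is the least common positive element of the two progressions, which exists because the two progressions eventually re-synchronize, using that $k\eta_1 < m \eta_2$ and $k\eta_2 < m\eta_1$ force a bounded search). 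Grant this; then by Lemma~\ref{trunc} applied to $E_1 \cap E_2$ and $E_1$, the set $E_1 \cap E_2$ is an initial segment of $E_1$, so its difference $\eta$ is a positive integer multiple of $\eta_1$, say $\eta = p\eta_1$; and symmetrically $\eta = r\eta_2$ for some $r \in \NN^{>0}$. Therefore $p\eta_1 = r\eta_2$, whence $\eta_2 = (p/r)\eta_1$ with $q = p/r \in \Q$, as desired.

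The step I expect to be the main obstacle is verifying that $E_1 \cap E_2$ is genuinely pseudo-arithmetic — i.e.\ that its successive differences are all equal. This is where one must use that a definable $\eta$-pseudo-arithmetic set with least element $0$ really does consist of $\{n\eta : n < \lambda\}$ for some (possibly infinite) ordinal-indexed length $\lambda$, which follows by an easy induction on the successor function from $|E_i'| = 1$, together with the Archimedean-equivalence input from Lemma~\ref{arith-arch} to guarantee the two progressions realign within a bounded number of steps so that $E_1 \cap E_2$ is infinite (or at least has more than one element, which is all that is needed to read off its common difference). One subtlety to handle carefully: if one of $E_1, E_2$ is finite, $E_1 \cap E_2$ could a priori be a single point $\{0\}$; but in that case one instead compares on a large common initial segment by first passing to $E_1 \cap [0, c]$ and $E_2 \cap [0,c]$ for suitable $c$ in the common Archimedean class, which does not change the differences, and runs the same argument there. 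Modulo these bookkeeping points the argument is short.
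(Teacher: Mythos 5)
There is a genuine gap at the heart of your argument: the claim that $E_1$ and $E_2$ ``eventually re-synchronize,'' i.e.\ that $E_1 \cap E_2$ contains a point other than $0$, does not follow from Archimedean equivalence of $\eta_1$ and $\eta_2$. Archimedean equivalence only says each of $\eta_1,\eta_2$ is bounded by an integer multiple of the other; it does not give commensurability. For instance, in $\langle \R; +, < \rangle$ with $\eta_1 = 1$ and $\eta_2 = \sqrt{2}$, the progressions $\{n\eta_1 : n \in \NN\}$ and $\{m\eta_2 : m \in \NN\}$ are Archimedean equivalent but meet only at $0$. Ruling out exactly this ``irrational ratio'' situation is the entire content of Lemma~\ref{rat}, so your key step (``a bounded search'' producing a common positive element, after which Lemma~\ref{trunc} and the integer-multiple bookkeeping finish) assumes the conclusion: $n\eta_1 = m\eta_2$ for some positive integers $n,m$ is precisely what must be proved. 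Your fallback of intersecting with $[0,c]$ does not help, since if $E_1 \cap E_2 = \{0\}$ it remains $\{0\}$ after restriction. Note also that nowhere in your argument do you use the standing burden-$2$ hypothesis (beyond its use inside Lemma~\ref{arith-arch}), which is a warning sign: without some such input the statement is simply false, as the real example shows once one adds enough structure to define both sets.

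The paper closes this gap by working with the union $F = E_1 \cup E_2$ rather than the intersection. Since $F$ is definable and discrete, $F'$ is finite by Theorem~\ref{finite_diff_set} (this is where burden $2$ enters). Because $\eta_1$ and $\eta_2$ are Archimedean equivalent, for infinitely many points $m_i\eta_1$ of $E_1$ the $F$-successor of $m_i\eta_1$ lies in $E_2$, say it equals $n_i\eta_2$; finiteness of $F'$ then gives, by pigeonhole, a single gap $\gamma$ with $m_i\eta_1 + \gamma = n_i\eta_2$ for infinitely many pairs $(m_i,n_i)$. Subtracting two such equations eliminates $\gamma$ and yields $\eta_2 = \frac{m_0 - m_1}{n_0 - n_1}\,\eta_1 \in \Q\,\eta_1$. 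If you want to salvage your approach, you would need an argument of this kind to first establish a nontrivial integer relation $n\eta_1 = m\eta_2$; once that is in hand, your use of Lemma~\ref{trunc} on $E_1 \cap E_2$ is unnecessary, since the relation itself already gives the rational ratio.
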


\begin{proof}  For each $E_i$ at least one of $E_i \cap [0, \infty)$ or $E_i \cap (-\infty, 0]$ is $\eta_i$-pseudo-arithmetic, hence we may assume that for each $i$ either $E_i \subseteq R^{\geq 0}$ or $E_i \subseteq R^{\leq 0}$.  By Lemma~\ref{unif-move} we may without loss of generality assume that $0 \in E_i$ and $E_i \subseteq R^{\geq 0}$ for $i \in \{1,2\}$.  Also by Lemma \ref{arith-arch} $\eta_1$ and $\eta_2$ are Archimedean equivalent.  Notice that the first $\omega$ many elements of $E_i$ are of the 
form $n \eta_i$ for $n \in \NN$.  If $n\eta_1=m\eta_2$ for some $m,n \in \NN^{>0}$ then we are done.  Let $F=E_1 \cup E_2$, which is also discrete.   As $\eta_1$ and $\eta_2$ are Archimedean equivalent it must be the case that for infinitely many $n_i \in \NN$ that the successor of $m_i \eta_1$ in $F$ is $n_i \eta_2$ for some $m_i, n_i \in \NN$.  Also as $F'$ is finite we can find $\gamma \in R$ and infinitely many pairs $(m_i, n_i) \in \NN^2$ so that 
$S_F(m_i\eta_1)=m_i\eta_1+\gamma=n_i\eta_2$.  Thus working with $(m_0,n_0)$ and $(m_1,n_1)$ we have that $m_0\eta_1+\gamma=n_0\eta_2$ and $m_1\eta_1+\gamma=n_1\eta_2$.  Eliminate $\gamma$ and solve for $\eta_2$ to get:  
$\eta_2=\frac{(m_0-m_1)}{(n_0-n_1)}\eta_1$ as desired.

\end{proof}

\begin{thm} \label{def-arith} Let $\mc{R}=\langle R; +, <, \dots\rangle$ be a definably complete  expansion of a divisible ordered Abelian group of burden $2$.  Let $D$ be a definable infinite discrete set.  There is a definable pseudo-arithmetic set $E \subseteq R^{\geq 0}$ with minimal element $0$ so that $D$ is definable in 
$\langle R; +, <, E\rangle$.
\end{thm}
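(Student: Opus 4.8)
The plan is to derive this from Theorem~\ref{union-arith} together with Lemmas~\ref{arith-arch} and \ref{rat}. By Theorem~\ref{union-arith}, write $D = P \cup E_1 \cup \dots \cup E_r$ with $P$ finite and each $E_i$ pseudo-arithmetic, say $E_i' = \{\eta_i\}$. Since $|E_i'| = 1$, for every non-maximal $a \in E_i$ its $E_i$-successor is exactly $a + \eta_i$, so each $E_i$ is a genuine arithmetic progression $E_i = \{a_i + n\eta_i : n \in J_i\}$ for some $a_i \in E_i$ and some integer interval $J_i$ (which may be finite of nonstandard length, a half-line, or all of $\ZZ$). Discarding the trivial case where $D$ is finite, at least one $J_i$ is infinite. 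By Lemma~\ref{arith-arch} all the $\eta_i$ lie in one Archimedean class, and by Lemma~\ref{rat} each $\eta_i/\eta_1 \in \Q^{>0}$; writing $\eta_i/\eta_1 = p_i/s_i$ in lowest terms, putting $s = \operatorname{lcm}(s_1,\dots,s_r)$ and $\eta^* = \eta_1/s$ (which exists since $\mc{R}$ is divisible by Fact~\ref{DC_divisible}), we get $\eta_i = k_i\eta^*$ with each $k_i \in \NN^{>0}$.

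Next I would produce the set $E$. Fix any index $i^*$ with $J_{i^*}$ infinite, and let $E$ be the $\eta^*$-refinement of $E_{i^*}$, i.e.\ the progression of common difference $\eta^*$ obtained by cutting each gap of $E_{i^*}$ into $k_{i^*}$ equal pieces. Concretely, noting that $\eta^* = \bigl(S_{E_{i^*}}(a) - a\bigr)/k_{i^*}$ is already definable from $E_{i^*}$, set $x \in E$ iff $\exists a \in E_{i^*}\bigvee_{j=0}^{k_{i^*}-1}\bigl(x = a + j\eta^* \wedge (j = 0 \vee a \text{ is non-maximal in } E_{i^*})\bigr)$, a first-order condition; so $E$ is definable in $\mc{R}$. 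It is pseudo-arithmetic with $E' = \{\eta^*\}$ by construction, and, since $E_{i^*}$ --- hence $E$ --- is infinite, the set of differences $\{\,v - v' : v, v' \in E\,\}$ is exactly $\eta^*\ZZ$.

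Finally I would recover $D$ inside $\langle R; +, <, E\rangle$. There $\eta^* = S_E(v) - v$ is definable, hence so is $\eta_i = k_i\eta^*$ for each fixed $i$ (a term), the predicate ``$z \in \eta^*\ZZ$'' is defined by $\exists v\,\exists v'(v \in E \wedge v' \in E \wedge v - v' = z)$, and ``$z \in \eta_i\ZZ$'' by $\exists u(z = k_i u \wedge u \in \eta^*\ZZ)$. Using a parameter $a_i \in E_i$, together with $\min E_i$ and $\max E_i$ as parameters on whichever side $E_i$ is bounded, we then have
\[
E_i = (a_i + \eta_i\ZZ) \cap \{\, x : \min E_i \leq x \leq \max E_i \,\},
\]
a formula of $\langle R; +, <, E\rangle$ (dropping the bounding conjuncts on any unbounded side). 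Adjoining the finitely many points of $P$ as parameters shows $D = P \cup E_1 \cup \dots \cup E_r$ is definable in $\langle R; +, <, E\rangle$, as required.

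The step I expect to need the most care is the claim that a single progression $E$ works for all the $E_i$ simultaneously: because the $E_i$ can lie on pairwise distinct cosets modulo $\eta^*\ZZ$, no one progression contains them all, and the point is instead that we only need $\eta^*\ZZ$ (equivalently $\eta_i\ZZ$ for each fixed $i$) to be \emph{uniformly definable} from $E$, which holds because the difference set of any infinite $\eta^*$-progression is precisely $\eta^*\ZZ$. One must still verify the sub-cases for the shape of $J_{i^*}$ (all of $\ZZ$, a half-line, or a nonstandard-finite interval), check that the bounding conjuncts with $\min E_i, \max E_i$ cut $a_i + \eta_i\ZZ$ down to exactly $E_i$, and confirm the routine point that divisibility by the fixed integer $k_i$ is expressible as $\exists u(z = k_i u)$ inside the reduct, which it is since $k_i u$ is a term.
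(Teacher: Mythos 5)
Your reduction to Theorem~\ref{union-arith} and the rescaling via Lemma~\ref{rat} are in line with the paper, but the step where you recover each $E_i$ inside $\langle R;+,<,E\rangle$ has a genuine gap. A definable pseudo-arithmetic set is \emph{not} in general a single progression $\{a_i+n\eta_i : n\in J_i\}$ with $J_i$ an interval of the standard integers: it can consist of many $\ZZ$-chains lying in different cosets of $\eta_i\cdot\ZZ$. (Think of $G\cap[0,N]$ in an $\omega$-saturated model of $\Th(\langle\R;+,<,\Z\rangle)$ with $N$ a nonstandard element of $G$; then $N-0$ is not a standard multiple of the least positive element.) Consequently your identity $E_i=(a_i+\eta_i\ZZ)\cap[\min E_i,\max E_i]$, with ``$\eta_i\ZZ$'' read off as $k_i$ times the difference set of $E$, is unjustified in both directions. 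For ``$\subseteq$'': if $b\in E_i$ lies in a different $\ZZ$-chain of $E_i$ than $a_i$, then $b-a_i$ is some offset that need not equal $k_i(v-v')$ for any $v,v'\in E$, since $E$ was built by refining the \emph{other} set $E_{i^*}$ and its difference set knows nothing about the chain offsets inside $E_i$. For ``$\supseteq$'': a point $a_i+k_i(v-v')$ can land in $[\min E_i,\max E_i]$ with no reason to belong to $E_i$. Choosing an \emph{arbitrary} infinite $E_{i^*}$ makes things worse even in the best case: if $E_{i^*}=G\cap[0,N]$ and $E_1=G\cap[0,2N]$, the difference set of $E_{i^*}$ is contained in $[-N,N]$ and misses $2N-0$, so your formula returns $G\cap[0,N]$ rather than $E_1$. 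Your parenthetical assertion that the difference set of $E$ ``is exactly $\eta^*\ZZ$'' is the false step: it always \emph{contains} $\eta^*\ZZ$, but is strictly larger whenever $E$ has more than one $\ZZ$-chain, and it is never large enough to capture chains of another $E_i$ that extend beyond the reach of $E_{i^*}$.

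The missing ingredient is precisely the paper's Lemma~\ref{trunc}: after translating each piece to have least element $0$ and rescaling via Lemma~\ref{rat} so that all pieces are $\eta$-pseudo-arithmetic for a single $\eta$, any two of them are nested, one being an initial segment of the other. The paper then takes $E^*$ to be the piece with the largest maximum, so that every $E_i$ is literally $E^*$ intersected with a closed interval (then one undoes the shift and rescaling, using divisibility). If you wanted to push your coset formula through, you would need to take $E_{i^*}$ to be this longest piece \emph{and} invoke Lemma~\ref{trunc} together with the closure properties of Lemma~\ref{groupish} to verify both inclusions --- at which point you have reconstructed the paper's argument with extra machinery.
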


\begin{proof}  By Theorem \ref{union-arith} $D$ is a finite union of definable pseudo-arithmetic sets, $D_1 \dots D_n$.   Cleary, $D$ is definable in any structure in which $D_1, \dots, D_n$ are definable.  As each $D_i$ is definable in any structure where $D_i \cap (-\infty, 0]$ and $D_i \cap [0, \infty)$ are both definable we may replace each $D_i$ by $D_i \cap (-\infty, 0]$ and $D_i \cap [0, \infty)$ and thus we find pseudo-arithmetic sets $E_1 \dots E_l$ so that for each $1 \leq i \leq l$ either $E_i \subseteq R^{\geq 0}$ or $E_i \subseteq R^{\leq 0}$ and $D$ is definable in $\langle R; +, <, E_1, \dots E_l\rangle$.   Further each $E_i$ is interdefinable with its image under a non-zero $\Q$-linear map and hence after translating and possibly multiplying by $-1$ we may replace $E_i$ by pseudo-arithmetic sets $F_i$ so that $\min(F_i)=0$ for each $i$ and so that $D$ is definable in $\langle R; +, <, F_1, \dots F_l\rangle$.

As $F_i$ is interdefinable with $qF_i$ for any  $q \in \Q^{\not=0}$ we may further assume by   Lemma  \ref{rat} that all of the $F_i$ are $\eta$-psuedo-arithmetic for one fixed $\eta$.   By Lemma \ref{trunc} for each $i$ and $j$ either $F_j$ is an initial segment of $F_i$ or vice versa.  Thus for some $F$ among the $F_i$, each $F_i$ is an initial segment of $F$.  Hence each $F_i$ and thus also $D$ is definable in $\langle R; +,<,F\rangle$.
\end{proof}

 %

Notice that it follows from  Theorem \ref{def-arith} and Lemma \ref{rat} that we can fix a single $\eta \in R$ so that if $D$ is any discrete set definable in $\mc{R}$ then there is an $\eta$-pseudo-arithmetic set $E$ so that $D$ is definable in $\langle R; +, <, E \rangle$.

Now we work towards showing that all infinite discrete sets $D$  definable in $\mc{R}$ are definable in a model of $\Th(\langle \R; +, <, \ZZ\rangle)$.  We recall some basic facts about  $T_{\ZZ}=\Th(\langle \R; +, <, \ZZ\rangle)$. 
\begin{fact}
\label{R0_facts}
\begin{enumerate}
\item \cite{ivp} The complete theory of the structure \[\langle \R; +, <, 0, 1, \lfloor \,\, \rfloor, \lambda \, : \, \lambda \in \Q \rangle\] has quantifier elimination, where $0$ and $1$ are constants, $\lfloor x \rfloor$ is the unary ``floor'' function giving the greatest integer less than or equal to $x$, and $\lambda$ denotes the unary function sending $x$ to $\lambda \cdot x$.   Notice that this structure has the same definable sets  as $\langle \R; +, <, \Z\rangle$ and is definably complete.   Furthermore in this language the structure is universally axiomatizable and hence all definable functions are given piecewise by terms.
\item \cite[Section 3.1]{DG} The theory $T_{\Z}$ has dp-rank $2$.
\end{enumerate}
\end{fact}

We work towards identifying a discrete subgroup of $\mc{R}$ which will act as our copy of ``$\Z$''.

\begin{lem}\label{groupish}  Suppose that $E \subseteq R$ is a definable $\eta$-pseudo-arithmetic set with smallest element $0$. Let $\mc{H}$ be the convex hull of $E$.
\begin{enumerate}
\item If $a,b \in E$ and $a+b \in \mc{H}$, then $a+b \in E$;
\item If $a, b \in E$ and $a\leq b$, then $b-a \in E$.
\end{enumerate}

\end{lem}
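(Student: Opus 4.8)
The plan is to prove part~(2) first, using Lemma~\ref{trunc}, and then to derive part~(1) from it by applying (2) three times. Before starting, I would record the shape of $\mathcal{H}$: since $E$ is a definable discrete set with $\min E = 0$, it is closed (Fact~\ref{closedd}), so if $E$ is bounded above then $\sup E \in E$ by definable completeness; hence the convex hull $\mathcal{H}$ equals $[0,\max E]$ when $E$ has a maximum and equals $R^{\geq 0}$ when $E$ is unbounded.

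For part~(2), given $a,b\in E$ with $a\leq b$, the cases $a=0$ and $a=b$ are immediate, so I would assume $0<a<b$; in particular $a$ is non-maximal in $E$. The key device is the definable discrete set $E_a := (E-a)\cap R^{\geq 0} = (E\cap[a,\infty))-a$. I would check that $E_a$ is again $\eta$-pseudo-arithmetic with least element $0$: since $a$ is non-maximal, $E\cap[a,\infty)$ has at least two elements, has least element $a$, and its successor function agrees with $S_E$ above $a$, so (using $E'=\{\eta\}$) all of its consecutive gaps equal $\eta$. Then Lemma~\ref{trunc} applies to the pair $E,E_a$: one is an initial segment of the other. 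If $E_a$ is an initial segment of $E$, then $E_a\subseteq E$. If instead $E$ is an initial segment of $E_a$, I would argue that $E=E_a$: a \emph{proper} initial segment would be bounded above, hence (being closed) have a maximum $m$, but then $m\in E\subseteq E_a=(E-a)\cap R^{\geq 0}$ would force $m+a\in E$, contradicting $m+a>m$ (recall $a>0$). Either way $E_a\subseteq E$, and since $b-a\in E_a$ (as $b\in E$, $b\geq a$, $b-a\geq 0$), we conclude $b-a\in E$.

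For part~(1), given $a,b\in E$ with $a+b\in\mathcal{H}$ — the cases $a=0$ or $b=0$ being trivial — the hypothesis $a+b\in\mathcal{H}$ is exactly what guarantees an element $m\in E$ with $m\geq a+b$: take $m=\max E$ if $E$ is bounded (so $a+b\leq\max E$ by definition of $\mathcal{H}$), and any sufficiently large element of $E$ if $E$ is unbounded. Then I would apply part~(2) three times: first to the pair $(b,m)$ to get $m-b\in E$; then to $(a,m-b)$, which is legitimate because $a\leq m-b \Leftrightarrow a+b\leq m$, to get $m-a-b\in E$; and finally to $(m-a-b,m)$, using $m-a-b\leq m$, to get $a+b=m-(m-a-b)\in E$.

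The main obstacle I anticipate is the first half of part~(2): verifying that $E_a$ is genuinely an $\eta$-pseudo-arithmetic set with least element $0$, so that Lemma~\ref{trunc} is applicable, and then correctly eliminating the ``wrong'' branch of the dichotomy (that $E$ is a proper initial segment of $E_a$) — this is where the hypotheses $\min E=0$ and $a>0$ are essential. Everything else is routine bookkeeping, and the only role of the hypothesis $a+b\in\mathcal{H}$ in part~(1) is to supply an upper bound $m\in E$ for $a+b$.
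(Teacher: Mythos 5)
Your proof is correct, but it takes a genuinely different route from the paper's. The paper proves both parts directly by a minimal-counterexample argument: the set of $a\in E$ admitting a bad $b$ is definable, discrete, closed and bounded below, hence has a least element, which must be positive; replacing that $a$ by its predecessor $a^{*}=a-\eta$ and using that consecutive gaps in $E$ are exactly $\eta$ (so $a^{*}+b+\eta=a+b$ for part (1), resp.\ $b-a^{*}-\eta=b-a$ for part (2)) gives an immediate contradiction in two lines for each part, with no appeal to Lemma~\ref{trunc}. Your argument instead funnels everything through Lemma~\ref{trunc}: you observe that the translate $E_a=(E\cap[a,\infty))-a$ is again a definable $\eta$-pseudo-arithmetic set with least element $0$, so it is comparable to $E$ as an initial segment, and you kill the ``wrong'' branch ($E$ a proper initial segment of $E_a$) using the maximum of $E$; part (1) then follows from part (2) by pure arithmetic with an upper bound $m\in E$ for $a+b$, which is exactly what the hypothesis $a+b\in\mc{H}$ supplies. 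Both arguments are sound. The step you flag as delicate is indeed fine: $a$ is non-maximal in $E$ (since $a<b\in E$), the successor function of $E\cap[a,\infty)$ agrees with $S_E$, so $E_a'=\{\eta\}$, and your elimination of the proper-initial-segment case is watertight because a proper initial segment of $E_a$ is bounded above and $E$ is closed. The paper's proof is shorter and self-contained; yours isolates the structural fact that $E$ is comparable with its translates by its own elements, which is arguably the conceptual content behind the lemma and foreshadows the group $F_0\cup -F_0$ built in Lemma~\ref{group}.
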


\begin{proof}  (1):  Suppose this fails.  Let $a \in E$ be least so that there is $b \in E$ with $a+b \in \mc{H}$ but $a+b \notin E$.  Notice that $0<a$, and let $a^*$ be the predecessor of $a$ in $E$.  Then $a^*+b \in \mc{H}$ so $a^*+b \in E$.  Also $a^*+b$ can not be maximal in $E$ as $a^*+b<a+b$ and $a+b$ is in the covex hull of $E$.   Thus 
$a^*+b+\eta \in E$.  But $a^*+b+\eta=a+b$, a contradiction.

(2)  Suppose this fails.  Let $a \in E$ be minimal so that $b-a \notin E$ for some $b \in E$ with $b\geq a$.  Notice that $a>0$, and if $a^* = S_E^{-1}(a)$, we must have that $b-a^* \in E$. Also $b-a^*>0$ and thus 
$b-a^*-\eta \in E$, but $b-a^*-\eta = b-a$, a contradiction.

\end{proof}

\begin{definition} Let $\eta \in R^{>0}$.  A subgroup $G \subseteq R$ is {\em $\eta$-integer-like} if $G$ is a discrete subgroup of $R$, $\eta$ is the smallest positive element of $G$, and for any $a \in R$ in the convex hull of $G$  there is    $b \in G$ with $b \leq a \leq b+\eta$.  $G$ is {\em integer-like} if it is $\eta$-integer-like for some $\eta$.
\end{definition}

If a definable, discrete, $\eta$-pseudo-arithmetic set $E \subseteq R^{\geq 0}$ is unbounded and  has minimal element $0$ then by Lemma \ref{groupish} $E \cup -E$ is an $\eta$-integer-like  subgroup of $R$.  Furthermore if $F$ is any other definable $\eta$-pseudo-arithmetic set with smallest element $0$ then $F$ is an initial segment of $E$ by Lemma \ref{trunc}.  We aim to show that in general there is an unbounded $\eta$-integer-like subgroup $G \subseteq R$ so that for all $E$, an $\eta$-pseudo-arithmetic set with smallest element $0$, $E$ is an initial segment of $G^{\geq 0}$.  Of course $G$ will typically not be definable.

\begin{lem}\label{group}   Suppose that $\eta \in R$ and there is a definable $\eta$-pseudo-arithmetic set.  Then there is an unbounded $\eta$-integer-like subgroup, $G$, so that for all $\eta$-pseudo-arithmetic definable sets $E$ with smallest element $0$,  $E$ an initial segment of $G^{\geq 0}$.
\end{lem}

\begin{proof} 

  First suppose there is a definable unbounded $\eta$-pseudo-arithmetic set $E$.  By Lemma~\ref{unif-move}(1) we may translate $E$ to   assume $0 \in E$.  Then by Lemma~\ref{groupish} $(E \cap [0, \infty)) \cup -(E \cap [0, \infty))$ is the desired group.  Also notice that by Lemma~\ref{trunc} $(E \cap [0, \infty) \cup -(E \cap [0,\infty))$ must be the unique subgroup with the desired properties.  Hence we assume that any definable $\eta$-pseudo-arithmetic set is bounded.  Again by Lemma~\ref{unif-move} if there is a definable $\eta$-pseudo-arithmetic set then there is one with least element $0$.
  
     Let 
 \[F_0=\bigcup\{E : E \text{ definable, } \eta\text{-pseudo-arithmetic, and}\min(E)=0\}.\]  Note that by Lemma \ref{trunc} $F_0=\bigcup_{i \in \alpha} E_i$ where $\alpha$ is an ordinal,  each $E_i$ is definable and $\eta$-pseudo-arithmetic with minimal element $0$,  and if $i<j \in \alpha$ then $E_i$ is an initial segment of $ E_j$.  Furthermore, again by Lemma~\ref{trunc} if $E$ is any definable $\eta$-pseudo-arithmetic set then $E$ is an initial segment of $F_0$.

We claim that if $x,y \in F_0$ then $x+y \in F_0$.  Pick $i$ so that $x,y \in E_i$ by Lemma \ref{groupish} $x+y \in 
E_i \cup (E_i+c)$ where $c=\max{E_i}$.  Note that $c$ exists as $E_i$ is bounded by assumption and also that $E_i \cup (E_i+c)$ is again $\eta$-pseudo-arithmetic.  But $E_i \cup (E_i+c) \subseteq F_0$.  Similarly if $x,y \in F_0$ and $x<y$ then $y-x \in F_0$.  Thus $F=F_0 \cup -F_0$ is a subgroup of $R$.  We claim that $F$ is $\eta$-integer-like.  To show that $F$ is discrete is suffices to show if $f_1<f_2 \in F_0$ then $f_2-f_1 \geq \eta$.  But this is immediate as $f_1, f_2 \in E_i$ for some $i \in \alpha$ and $E_i$ is an intial segment of $F_0$ by Lemma~\ref{trunc}.  Also clearly $\eta$ is the smallest positive element of $F$ as each $E_i$ is $\eta$-pseudo-arithmetic.  Finally suppose that $u$ is in the convex hull of $F$ and first assume that $u\geq 0$.  Then $u$ is in the convex hull of $E_i$ for some $i \in \alpha$.  Let $b=\max\{e \in E_i : e \leq u\}$, then $b$ is an element of $F$ so that $b \leq u \leq b+\eta$. 
 If $u<0$, find $b \in F$ so that $b\leq -u\leq b+\eta$, then $-(b+\eta) \in F$  is so that $-(b+\eta) \leq u \leq -(b+\eta)+\eta$.

If $F$ is unbounded we are done. Otherwise let $U$ be the convex hull of $F$ in $R$.  As $F$ is a subgroup of $R$ it follows that $U$ is a rational vector subspace of $R$.  Let $H$ be the subspace of $R$ so that  $R=U\oplus H$ as rational vector spaces.  Then $H$ is a discrete subset of $R$ since if $h_1<h_2 \in H$ and $h_2-h_1<u$ for some non-zero $u \in U$ then, as $U$ is convex, $h_2-h_1=v$ for some non-zero $v \in U$ which is impossible.  Also $H$ is clearly unbounded in $R$.  

Finally let $G=F+H$.  Clearly $G$ is an unbounded subgroup of $R$.  We need to verify that $G$ is $\eta$-integer-like.  Let $f+h \in G$ and note that $(f+\eta)+h \in G$ as $F$ has minimal element $\eta$.  We claim there is no $f'+h' \in G$ so that $f+h<f'+h'<(f+\eta)+h$.  If $h=h'$ then it must be the case that $f'>f$ but we must also have $f'-f<\eta$ which is impossible.  Thus $h \not= h'$, but then $(f'+h')-(f+h) \notin U$ (since otherwise $h'-h \in U$) and so $(f'+h')-(f+h)>\eta$ which is also impossible.  Hence $G$ is a discrete group and $\eta$ must be the smallest element of $G$.  Now suppose that $a \in R$.  Write $a=u+h$ where $u \in U$ and $h \in H$.  As $F$ is $\eta$-integer-like there is $b \in F$ so that $b \leq u \leq b+\eta$ but then $b+h \in G$ is so that $b+h \leq u+h \leq b+\eta+h$.  

  By construction if $E$ is any definable $\eta$-pseudo-arithmetic subset of $R$ with minimal element $0$ then $E $ is an initial segment of $F^{\geq 0}$ and hence of $G^{\geq 0}$.

\end{proof}

Now we may provide a proof of Theorem \ref{def_in_G}.

\begin{proof}  By Lemma~\ref{rat} we find find $\eta \in R$ so that if $F\subseteq R$ is any pseudo-arithmetic set then there is $q \in \Q$ so that $qF$ is $\eta$-pseudo-arithmetic.  Fix such an $\eta$.  For any infinite definable discrete set $D$, by Theorem \ref{def-arith} there is a definable  pseudo-arithmetic set $E(D)\subseteq R^{\geq 0}$ with minimal element $0$ so that 
$D$ is definable in $\langle R; +, <, E(D)\rangle$.  By our choice of $\eta$ we may assume that all such $E(D)$ are  $\eta$-pseudo-arithmetic.    As we assume that there is a definable infinite discrete subset of $R$ then there is a definable $\eta$-pseudo-arithmetic set.  By Lemma \ref{group} there is an unbounded $\eta$-integer-like group $G$  so that $E(D)$ is an initial segment of $G^{\geq 0}$ for every infinite definable discrete set $D$.  Notice that all such $E(D)$ (and hence all $D$) are definable in $\langle R; +, <, G\rangle$.  Finally by the Appendix of \cite{ivp} $\langle R; +, <, G\rangle$ is a model of $T_{\ZZ}$.

For the ``furthermore'' we show that under the additional assumption that $\mc{R}$ is of dp-rank $2$  there is $G \subseteq R$ an  integer-like subgroup so that all $\mc{R}$-definable $X \subseteq R$ are definable in $\langle R; +, <, G \rangle$.   Thus suppose that $\mc{R}$ is of dp-rank $2$.  By \cite[Corollary 2.10]{topological_properties_burden_2} any set definable in $\mc{R}$ is either discrete or has interior. By the first part of the theorem we may fix $G \subseteq R$  an integer-like subgroup so that all $\mc{R}$-definable discrete subsets $D \subset R$ are definable in $\langle R; +, < G \rangle$.  It suffices to show that any open set definable in $\mc{R}$ is definable in $\langle R; +, < , G\rangle$.    Thus let $U$ be an open $\mc{R}$-definable subset of $R$ and without loss of generality assume that $U \not= R$.   As $\mc{R}$ is definably complete, $U$ is a disjoint union of open intervals.  Let $D_0$ be the set of all left endpoints of open intervals in $U$ and let $D_1$ be the set of all right endpoints of intervals in $U$.  Note that $D_0$ and $D_1$ are definable in $\mc{R}$ and as we are assuming that $U \not=R$ at least one of $D_0$ or $D_1$ is non-empty.  The sets $D_0$ and $D_1$ are discrete and definable in $\mc{R}$ and hence also definable in $\langle R; +, <, G \rangle$.

Thus $U$ may be defined in $\langle R; +, <, G \rangle$ as:
\[\{x \in R : \exists d_0 \in D_0 \exists d_1 \in D_1(d_0<x<d_1 \wedge (d_0, d_1) \cap (D_0 \cup D_1)=\emptyset)\}\]
\[\cup \{x \in R : \exists d_1 \in D_1 ( x<d_1 \wedge (-\infty, d_1) \cap (D_0 \cup D_1)=\emptyset)\}\]
\[\cup \{x \in R: \exists d_0 \in D_0(d_0<x \wedge (d_0, \infty) \cap (D_0 \cup D_1)=\emptyset)\}\]
\end{proof}

At this stage we provide a more detailed description of the definable subsets of $R$ by simply developing a precise description of the definable subsets in models of $T_{\Z}$.  We recall the following definition.

\begin{definition}  A structure $\mc{R}=\langle R; <, \dots \rangle$ with $<$ a dense linear ordering without endpoints is called {\em locally o-minimal} if for every definable $X \subseteq R$ and every $x \in X$ there is an open interval $I$ with $x \in I$ so that $X \cap I$ is a finite union of points and intervals.
\end{definition}

\begin{prop} \label{more_precise} If $\mc{M}=\langle M; +, <, Z \rangle \models T_{\Z}$ and $X \subseteq M$ is definable then $X$ is a finite union of sets of the form 
\begin{itemize} \item $\bigcup_{w \in W}\{w+\lambda\}$ where $W$ is definable in the structure $\langle Z; +, < \rangle$ and $\lambda \in [0,1)$ (where $1$ denotes the minimum positive element of $Z$);

or 

\item $\bigcup_{w \in W}(w+\lambda_0, w+\lambda_1)$ where $W$ is definable in the structure 
$\langle Z; +, < \rangle$, $\lambda_0<\lambda_1$, and $\lambda_0,\lambda_1 \in [0,1]$.

\end{itemize}

\end{prop}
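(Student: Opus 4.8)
The plan is to derive this from the quantifier elimination recorded in Fact~\ref{R0_facts}(1) together with a normal-form analysis of one-variable terms. Both the floor function $\floor{\,\cdot\,}$ and the scalar multiplications $x \mapsto \lambda x$ ($\lambda \in \Q$) are definable in $\mc{M}$ — the former as the unique $z \in Z$ with $z \le x < z + 1$ (using that $Z$ is discrete with least positive element, which we call $1$), the latter since the underlying ordered group is divisible and torsion-free — so the expansion $\mc{M}^{+}$ of $\mc{M}$ by these operations has precisely the same definable sets as $\mc{M}$ and, by Fact~\ref{R0_facts}(1), eliminates quantifiers. Thus $X$ is a finite Boolean combination of sets $\{x : t(x) \bowtie 0\}$ where $t(x)$ is an $\mc{M}^{+}$-term in $x$ (parameters from $M$ permitted) and $\bowtie\ \in\ \{<,=,>\}$. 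Writing $\{x\} := x - \floor{x} \in [0,1)$, the family of finite unions of the two kinds of set in the statement is easily seen to be closed under finite Boolean operations: over the decomposition $M = \bigsqcup_{z \in Z}[z, z+1)$ each member is, fiber by fiber over $z = \floor{x}$, a finite union of subintervals of $[z,z+1)$ and finitely many points, and these data are closed under intersection, union and complement because the subsets of $Z$ definable in $\langle Z; +, < \rangle$ form a Boolean algebra (stable under the shift $W \mapsto W + 1$) and, inside a fixed interval $[z,z+1)$, finite unions of subintervals and points do too. Hence it suffices to show that each atomic set $\{x : t(x) \bowtie 0\}$ has the required form; and, the family being closed under Boolean operations, it is enough to handle $\{x : t(x) \ge 0\}$ and $\{x : t(x) = 0\}$, the cases $t(x) > 0$ and $t(x) < 0$ then following.

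The crux is a normal form for terms, which we would prove by induction on the construction of $t$: there is a finite partition of $M$ into definable pieces $P_{W,I} := \{x : \floor{x} \in W,\ \{x\} \in I\}$, with $W \subseteq Z$ definable in $\langle Z; +, < \rangle$ and $I \subseteq [0,1)$ an interval, on each of which $t$ agrees with a function of the shape $x \mapsto q\{x\} + h(\floor{x}) + c$ for a fixed $q \in \Q$, a fixed $c \in M$ built from the parameters, and some $h : Z \to Z$ definable in $\langle Z; +, < \rangle$. The base cases ($t$ equal to $x$, a constant, or a parameter) are immediate, and $t = t_1 + t_2$ is handled by passing to a common refinement of the two partitions and adding. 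For $t = \lambda t_1$ and for $t = \floor{t_1}$ one first writes $t_1$ in normal form on its pieces and then refines further: $W$ is refined by the residue of $h(\floor{x})$ modulo a suitable $n$ (a condition on $\floor{x}$ definable in $\langle Z; +, < \rangle$, since each congruence $\equiv_n$ is), and $I$ is refined into finitely many subintervals; division with remainder in the discretely ordered group $Z$, together with the identity $\floor{z + y} = z + \floor{y}$ valid for $z \in Z$, then re-expresses $\lambda t_1$, respectively $\floor{t_1}$, in the required shape on each new piece. (The interval endpoints produced this way lie in the $\Q$-affine span of $1$ and the fractional parts of the parameters, which is harmless for the statement.)

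Granting the normal form, fix a piece $P_{W,I}$ and consider $\{x \in P_{W,I} : t(x) \ge 0\}$; the case $q = 0$ is immediate and $q < 0$ is symmetric, so assume $q > 0$. Setting $w = \floor{x}$, the condition is $\{x\} \ge (-h(w)-c)/q$, and this threshold can lie in $[0,1)$ only when $h(w)$ lies in the $M$-interval $(-c-q,\,-c\,]$, which contains at most $\floor{q} + 1$ elements of $Z$. Accordingly split $W$ into the piece $W_0 = \{w : h(w) \ge -\floor{c}\}$ (threshold $\le 0$: the full fiber $\{w\}+I$ is kept), the piece where the threshold is $\ge 1$ (empty fiber), and, for each of the finitely many $z_0 \in Z$ with $-c - q < z_0 \le -c$ and $(-z_0-c)/q \in (0,1)$, the piece $W_{z_0} = \{w : h(w) = z_0\}$, on which the threshold equals the constant $\mu_{z_0} := (-z_0-c)/q$ so that the fiber over $w$ is $\{w\} + J_{z_0}$ with $J_{z_0} := [\mu_{z_0},1) \cap I$; each of these subsets of $Z$ is definable in $\langle Z;+,<\rangle$ after absorbing $\floor{c}$ as a parameter. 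The contribution over $W_0$ is $\bigcup_{w \in W_0}(\{w\}+I)$ and over $W_{z_0}$ is $\bigcup_{w \in W_{z_0}}(\{w\}+J_{z_0})$; decomposing each of these intervals into its interior (an open interval with endpoints in $[0,1]$) together with its at most two endpoints (single offsets in $[0,1)$), we obtain a finite union of sets of the two listed types. The case $\bowtie$ equal to ``$=$'' is the same but easier, the fiber over each relevant $w$ being a single point $\{w + \mu_{z_0}\}$. Taking the union over the finitely many pieces $P_{W,I}$ finishes the argument. We expect the main obstacle to be the term-normalization of the second paragraph: although conceptually routine, bookkeeping the successive refinements and checking that every index set so produced is still definable in $\langle Z; +, < \rangle$ requires care.
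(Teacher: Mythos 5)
Your argument is correct in outline, but it takes a genuinely different route from the paper. The paper does not touch quantifier elimination at all: it first invokes local o-minimality and definable completeness of $T_{\Z}$ to split $X$ into a discrete piece and an open piece, then (for the discrete piece, after reducing via \cite[Lemma 3.3]{DG} to the case $X\cap Z=\emptyset$ and to at most one point of $X$ per interval $(l,l+1)$) observes that the image of $X$ under $x\mapsto x-\lfloor x\rfloor$ is a definable discrete subset of $(0,1)$, hence finite; the finitely many fractional parts $\zeta_i$ and the Presburger-definable index sets $W_i=\{z\in Z: z+\zeta_i\in X\}$ then give the decomposition directly, and the open case is handled analogously via endpoints. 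Your route instead leans on Fact~\ref{R0_facts}(1) and pushes all the work into a one-variable term normal form $q\{x\}+h(\lfloor x\rfloor)+c$ on pieces $P_{W,I}$; the sketch you give for the inductive steps (refining $W$ by congruences to absorb $\lambda\,h(\lfloor x\rfloor)$ into a Presburger-definable $Z$-valued function, and refining $I$ at the finitely many thresholds where $q\{x\}+c$ crosses an element of $Z$ to evaluate an outer floor) is the right mechanism and I see no obstruction to completing it, so the gap you flag is genuinely only bookkeeping. What each approach buys: the paper's proof is short because it outsources the hard content to local o-minimality, to \cite[Lemma 3.3]{DG}, and to the fact that definable images of discrete sets are discrete, whereas yours is self-contained modulo the quantifier elimination and, as a by-product, yields the stronger syntactic normal form for definable one-variable functions, at the cost of the term induction. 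One small point of care if you write it up: in the closure-under-complement step the fibers over $w\notin W$ must also be accounted for (they contribute $\{w\}\cup(w,w+1)$ over the Presburger-definable set $Z\setminus W$), and the thresholds $\mu_{z_0}$ and interval endpoints you produce are arbitrary elements of $[0,1]$ in $M$, not just rationals, which is exactly what the statement permits.
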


\begin{proof} 

By \cite[Lemma 3.3(i)]{DG} and compactness there is $N \in \NN$ so that  if $l \in Z$ then $X \cap (l,l+1)=X_1 \cup \dots \cup X_N$ where each $X_i$ is either  empty, a single point, or an open interval, and the $X_i$ are pairwise disjoint.  We proceed by induction on $N$.

 By \cite[Theorem 19]{KTTT} $T_{\Z}$ is locally o-minimal and as noted in Fact \ref{R0_facts} it is definably complete.  These two facts together imply that  if $X \subseteq M$ is definable then $X$ is a union of a closed discrete set and an open set (see \cite[Lemma 2.3]{tame_lom}).  Furthermore by definable completeness any  open subset is a union of open intervals. %

Thus without loss of generality we may assume that $X$ is either a union of open intervals or closed and discrete. If $Y \subseteq Z$ is definable  then $Y$ is definable in $\langle Z; +, < \rangle$ by \cite[Lemma 3.3(ii)]{DG}.  Thus $X \cap Z$ is a set of the first form in the statement of the Lemma and we may  without loss of generality assume that $X \cap Z = \emptyset$.

  Suppose that $N=1$ and also that $X$ is open.  Thus $X$ consists of at most one open subinterval of $(l,l+1)$ for each $l \in Z$.
 Let $X_l$ be the set of left endpoints of  intervals in $X$ and let $X_r$ be the right endpoints.  Note that $X_l$ and $X_r$ are closed and discrete and thus so is $X_l \cup X_r$.  Let $f: X_l \cup X_r \to [0,1]$ be given by $x \mapsto x-\floor{x}$ if $x \in X_l$ or $x \in X_r \setminus Z$ and $x \mapsto 1$ otherwise.   As noted in Fact \ref{R0_facts} $T_{\Z}$ is of dp-rank $2$ and thus is strongly dependent.  Hence by \cite[Corollary 2.17]{DG}  in models of $T_{\Z}$ forward images of discrete sets under definable functions must be discrete.
  Thus $f[X_l \cup X_r] \subseteq [0,1]$ is discrete and so by \cite[Lemma 3.3(i)]{DG} it is  finite.

  Let $\xi_1< \dots < \xi_s$ list all the elements of $f[X_l \cup X_r]$.    For $1 \leq i < j \leq s$ let
\[W_{ij}=\{z \in Z : (z+\xi_i, z+\xi_j) \subseteq X\}.\]
By \cite[Lemma 3.3(ii)]{DG} $W_{ij}$ is definable in $\langle \Z; +, < \rangle$.  Let
\[X_{ij}=\{x : \exists z (z \in W_{ij} \wedge z+\xi_i < x < z+\xi_j)\}.\]
Thus $X=\bigcup_{1 \leq i,j \leq s}X_{ij}$ and each $X_{ij}$ is of the second form given in the statement of the proposition.

Now suppose $X$ is discrete.  Let $\tilde{X}=\bigcup_{x \in X}(\floor{x},x)$, which is a union of open intervals with at most one interval in each $(l, l+1)$ for $l \in Z$.  Thus the previous case applies and $\tilde{X}$ is a finite union of sets 
$X_i= \bigcup_{w \in W_i}(w+\lambda^i_0, w+\lambda^i_1)$ with $W$ definable in $\langle Z; +, < \rangle$, $\lambda^i_0<\lambda^i_1$, and $\lambda^i_0, \lambda^i_1 \in [0,1)$.  (Note that $\lambda_1^i<1$ as $X \cap Z=\emptyset$.)  For each $i$ let $W'_i=\{w \in W_i : w+\lambda^i_1 \in X\}$, which again is definable in $\langle Z ; +, < \rangle$ by \cite[Lemma 3.3(ii)]{DG}.  Then $X$ is the union of  $\bigcup_{w \in W'_i}\{w + \lambda^i_1\}$ which are of the first form in the statement of the Lemma.
  
  Now suppose that $N>1$.  Let 
  \[X_0=\{x \in X : \neg \exists y,z (\floor{x}<y<z<x \wedge z \notin X \wedge y \in X)\}.\]  Then $X_0$ is a set so that $(l, l+1) \cap X_0$ consists of at most one point or interval 
  for each $l\in Z$ and so the $N=1$ case applies to $X_0$.  Also $(X \setminus X_0) \cap (l, l+1)$ must consist of at most $N-1$ points or intervals for all $l \in Z$ so the induction hypothesis applies to $X \setminus X_0$ and we are done.

\end{proof}
 Note that by the cell decomposition result of Cluckers for sets definable in models of the theory of $\langle \Z; +, < \rangle$ \cite{PrCell}, we can describe the sets $W$ in the conclusion of Proposition~\ref{more_precise} even more explicitly:   Each such $W$ is  a finite union of points and infinite intervals intersected with a coset of $nZ$ for some $n \in \N$.

Recall that by \cite{DG} $\langle \R; +, <, \Z\rangle$ has dp-rank $2$ and is clearly definably complete.  In light of Theorem \ref{def_in_G} it is reasonable to ask if for any   $\mc{R}=\langle R; +, <, \dots\rangle$ which is a  definably complete and  dp-rank $2$ expansion of a divisible ordered Abelian group there is a group $G$ so that $\langle R; +, <, G\rangle \models T_{\Z}$ and $\mc{R}$ is a reduct of $\langle R; +, <, G \rangle$.   We give an example to show this is not the case.

To construct our example of a divisible dp-rank $2$ expansion of an  OAG with definable completeness which is not simply a reduct of  a model of $T_{\ZZ}$, we will use the \emph{simple product} construction as defined in \cite{KTTT}, in particular the \emph{standard simple product} as elaborated by Fujita in \cite{fujita}, whose definition we now recall.

\begin{definition}
\label{standard_simple_product}
(See \cite{fujita}, Definition 2.5) Suppose that $\mc{L}_0$ and $\mc{L}_1$ are two disjoint languages with only relation and constant symbols, and we assume that each language contains at least one constant symbol. Further suppose that $\mc{M}_0$ and $\mc{M}_1$ are structures in the languages $\mc{L}_0$ and $\mc{L}_1$ respectively. The \emph{standard simple product of $\mc{M}_0$ and $\mc{M}_1$} is the structure $\mc{N}$ in a new language $\mc{L}_{sim}$, defined as follows:

\begin{enumerate}
\item The universe $N$ of $\mc{N}$ is the Cartesian product $M_0 \times M_1$.
\item For each constant symbol $c_0 \in \mc{L}_0$ and each constant symbol $c_1 \in \mc{L}_1$, there is a constant symbol $C_{c_0, c_1} \in \mc{L}_{sim}$, which is interpreted in $\mc{N}$ as $$C_{(c_0, c_1)}^{\mc{N}} = (c_0^{\mc{M}_0}, c_1^{\mc{M}_1}).$$
\item For $k \in \{0, 1\}$ and for each $n$-ary relation symbol $R \in \mc{L}_k$, there is an identically named relation symbol $R \in \mc{L}_{sim}$. Letting $\pi_k \, : \, N \rightarrow M_0 \times M_1$ be the projection onto the $k$-th coordinate, we define the interpretation $R^{\mc{N}}$ of $R$ in $\mc{N}$ as $$R^{\mc{N}} = \{ (a_1, \ldots, a_n) \in N^n \, : \, (\pi_k(a_1), \ldots, \pi_k(a_n)) \in R^{\mc{M}_k} \}.$$
\item Finally, $\mc{L}_{sim}$ contains two additional binary relation symbols $\sim_0$ and $\sim_1$ interpreted as equality of the corresponding coordinate projections: for each $k \in \{0,1\}$,

$$\sim_k^{\mc{N}} = \{(a,b) \in N^2 \, : \,  \pi_k(a) = \pi_k(b)\}.$$
\end{enumerate}

\end{definition}

As shown by Fujita \cite{fujita}, the complete theory of the standard simple product $\mc{N}$ can be naturally axiomatized in terms of the complete theories of each $\mc{M}_k$.

	\begin{prop}\footnote{We were alerted by the referee that a similar bound was obtained by Touchard (see Proposition 1.24 of \cite{touch}). However, Touchard's result concerns burden, not dp-rank, so to apply his result, we would need to prove that NIP is preserved under direct products of structures; hence we found it simpler to include our own proof here.}
If $\mc{M}_0$ and $\mc{M}_1$ are structures such that $\textup{dp-rk}(\mc{M}_k) \leq \kappa_k$ for $k \in \{0,1\}$, then if $\mc{N}$ is the standard simple product of $\mc{M}_0$ and $\mc{M}_1$, $$\textup{dp-rk}(\mc{N}) \leq \kappa_0 + \kappa_1.$$
\label{simple_product_bound}
\end{prop}

\begin{proof}
We will use the following characterization of dp-rank found in  \cite[Proposition 4.17]{Guide_NIP} (though the idea goes back to \cite{additivity_dp_rank}):

\begin{facta}
If $\mc{M} \models T$ is sufficiently saturated, $\textup{dp-rk}(T) < \kappa$ if and only if for every $b \in M$ and every family of infinite, mutually indiscernible sequences $(I_t \, : \, t \in X)$, there is some $X' \subseteq X$ such that $|X'| < \kappa$ and the family $(I_t \, : \, t \in X \setminus X')$ is mutually indiscernible over $b$.
\end{facta}
 
We will apply this criterion to the standard simple product $\mc{N}$. By \cite[Theorem 2.8]{fujita}, we may replace our original model $\mc{N}$ by a sufficiently saturated extension and the result will still be a standard simple product of models of $\textup{Th}(\mc{M}_k)$. Now fix any $b = (b^0, b^1) \in N$ and let $(I_t \, : \, t \in X)$ be mutually indiscernible sequences of finite tuples from $N$, say $I_t = \{ \overline{a}_{i, t} \, : \, i \in \omega\}$. Each $\overline{a}_{i,t}$ can be written as $(\overline{a}^0_{i,t}, \overline{a}^1_{i,t})$ where $\overline{a}^k_{i,t}$ is the tuple from $M^k$ obtained by coordinate projection, and let $I^k_t = \{\overline{a}^k_{i,t} \, : \, i \in \omega \}$. By \cite[Lemma 3.4]{fujita}, each family $(I^k_t \, : \, t \in X)$ for $k \in \{0,1\}$ is mutually indiscernible, so by the characterization of dp-rank given in the Fact, there are sets $X^k \subseteq X$ such that $|X^k| < \kappa_k^+$ such that $(I^k_t \, : \, t \in X \setminus X^k)$ are mutually indiscernible over $b^k$. Hence $|X^k| \leq \kappa_k$, and thus if $X' = X^0 \cup X^1$, we have $|X'| \leq \kappa_0 + \kappa_1$, or equivalently, $|X'| < (\kappa_0 + \kappa_1)^+$. By \cite[Lemma 3.4]{fujita}, we have $(I_t \, : \, t \in X \setminus X')$ are mutually indiscernible over $b$, so applying the Fact again, we conclude that $\textup{dp-rk}(T) < (\kappa_0 + \kappa_1)^+$, as desired.
\end{proof}

\begin{prop}
\label{sin_rank_2}
Let $\mc{R}=\langle \R; +, <, \sin{x} \rangle$.  If $T=\Th(\mc{R})$ then $T$ is definably complete and has dp-rank $2$.
\end{prop}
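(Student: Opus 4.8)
The plan is to prove $\dpr(T) \ge 2$ and $\dpr(T) \le 2$ separately; the lower bound is immediate, and the upper bound is obtained by realizing $\mc{R}$ inside a standard simple product and invoking Proposition~\ref{simple_product_bound}.

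\emph{Lower bound.} The zero set of $\sin$ is $\pi\ZZ$, so $2\pi\ZZ = \{x \in \R : \sin(x/2) = 0\}$ is definable in $\mc{R}$ (multiplication by $\tfrac12$ being definable already in $\langle \R; +, <\rangle$). Hence $\langle \R; +, <, 2\pi\ZZ\rangle$ is, up to interdefinability, a reduct of $\mc{R}$, and the map $x \mapsto x/(2\pi)$ is an isomorphism of it onto $\langle \R; +, <, \ZZ \rangle$, whose theory $T_{\ZZ}$ has dp-rank $2$ by Fact~\ref{R0_facts}(2). Since dp-rank cannot increase on passing to a reduct, $\dpr(T) \ge \dpr(T_{\ZZ}) = 2$.

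\emph{Upper bound.} Let $\mc{M}_0 = \langle \ZZ; R^0, <_0, 0\rangle$, where $R^0$ names the graph of integer addition; this is interdefinable with Presburger arithmetic, so it is dp-minimal, and its language has a constant and no function symbols. Let $S \subseteq \R^2$ be the graph of $\sin\!\restriction_{[0,2\pi]}$, a bounded subanalytic set, and let $\mc{M}_1 = \langle \R; R^1, <_1, S, 0\rangle$, where $R^1$ names the graph of real addition; as a reduct of $\R_{\mathrm{an}}$, $\mc{M}_1$ is o-minimal, hence dp-minimal, and again its language has a constant and no function symbols. Let $\mc{N}$ be the standard simple product of $\mc{M}_0$ and $\mc{M}_1$ (taking the languages disjoint), so $N = \ZZ \times \R$ and, by Proposition~\ref{simple_product_bound}, $\dpr(\mc{N}) \le 1 + 1 = 2$; in particular $\mc{N}$ is NIP. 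Observe that $\pi$ and $2\pi$ are definable in $\mc{M}_1$ (as the least positive zero, respectively the right endpoint of the domain, of the restricted sine) and $1$ is definable in $\mc{M}_0$, so the corresponding points are available as definable parameters in $\mc{N}$. Put $D = \{(n, r) \in \ZZ \times \R : 0 \le r < 2\pi\}$, a definable subset of $N$, and let $\beta \colon D \to \R$ be the bijection $\beta(n, r) = 2\pi n + r$. I claim that the $\beta$-preimages of $<$, of $\graph(\sin)$, and of $+$ on $\R$ are definable in $\mc{N}$: the order pulls back to the lexicographic order $\prec$ on $D$, definable from the two order predicates and the relation $\sim_0$ equating first coordinates; the graph of $\sin$ pulls back to the relation taking $(n,r)$ to $(0, \sin r)$ when $r \le \pi$ and to $(-1, 2\pi + \sin r)$ when $r > \pi$, definable from $S$, $R^1$ and the parameters; and $+$ pulls back to the operation $\oplus$ sending $((n_1,r_1),(n_2,r_2))$ to $(n_3, r_3)$ with $r_3 \in [0,2\pi)$ congruent to $r_1 + r_2$ modulo $2\pi$ and $n_3 = n_1 + n_2 + c$, where $c$ is the carry bit $[\,r_1 + r_2 \ge 2\pi\,]$, definable from $R^0$, $R^1$, the two orders and the parameters. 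Thus $\langle D;\ \prec,\ \oplus,\ \beta^{-1}(\graph(\sin)) \rangle$ is a structure definable in $\mc{N}$ and, via $\beta$, isomorphic to $\langle \R; <, +, \graph(\sin)\rangle$, which is interdefinable with $\mc{R}$. Since a structure definable in an NIP (here dp-finite) structure has dp-rank at most that of the ambient structure, $\dpr(T) = \dpr(\mc{R}) \le \dpr(\mc{N}) \le 2$. Combining with the lower bound, $\dpr(T) = 2$.

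\emph{Main obstacle.} The only step requiring genuine care is the definability of the carried addition of $\R$ inside $\mc{N}$: this succeeds precisely because the second factor $\mc{M}_1$ is the whole real line, so the sum $r_1 + r_2$ --- a priori as large as $4\pi$ --- is still an element of $\mc{M}_1$, whence the carry bit $[\,r_1 + r_2 \ge 2\pi\,]$ and the reduced value $r_1 + r_2 - 2\pi$ are both definable from it; similarly $2\pi + \sin r$ (with $\sin r < 0$) lies in $\mc{M}_1$, which is what makes the pulled-back graph of $\sin$ definable. One must also run the simple product construction on the graph-predicate presentations of $\mc{M}_0$ and $\mc{M}_1$, since that construction disallows function symbols; passing from addition to its graph changes neither the definable sets nor the dp-rank of any of the structures involved.
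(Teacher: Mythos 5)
Your proof is correct and takes essentially the same route as the paper: both bound the dp-rank above by exhibiting $\mc{R}$ inside a standard simple product of a Presburger-type structure and an o-minimal structure and invoking Proposition~\ref{simple_product_bound}, the only cosmetic difference being that the paper's second factor is $[0,2\pi)$ with addition modulo $2\pi$ while yours is $\R$ with the graph of restricted sine, forcing you to restrict to $\ZZ\times[0,2\pi)$ and define the carried addition explicitly. If anything your version is more careful than the paper's, which writes $\sin_2(n,x)=(0,\sin x)$ and ``componentwise addition'' without addressing the negative-sine and carry cases that you handle explicitly.
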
 

\begin{proof}
By \cite[Lemma 3.3]{Goodrick_dpmin} the dp-rank of $T$ is at least $2$ as no infinite discrete set can be definable in a dp-minimal expansion of a divisible OAG.

$T$ is definably complete as $\mc{R}$ has universe $\R$.  We establish that $T$ has dp-rank $2$ by showing that it may be realized as a reduct of a simple product which we can verify has dp-rank $2$  via  Proposition \ref{simple_product_bound}.

 Let $\mc{M}_0=\langle \ZZ; +_{Z}, <_{Z}, 0_{Z}\rangle$ where $+_Z, <_Z,$ and $0_Z$ are just the usual addition, order, and $0$ except that we think of $+_Z$ as a relation.  Let $\mc{M}_1=\langle [0, 2\pi); +_s, <_s, \sin_s{x}, E_s, 0_s\rangle$ where $<_s$ and $0_s$ are just the usual order and $0$, $\sin_s{x}$ is the sine function restricted to $[0, \pi]$ thought of as a relation, $E_s$ is a binary relation that holds of $a,b$ if and only if $a+b<2\pi$, and $+_s$ is addition modulo $2\pi$ thought of as a relation.  Note that $\mc{M}_0$  is quasi-o-minimal with definable bounds  (see \cite[Example 2 and Remark after Theorem 3 ]{qom})  thus by  \cite[Corollary 3.3]{DGL}) $\mc{M}_0$ is dp-minimal.  By  \cite[Example 1.6]{rag} $\mc{M}_1$ is o-minimal and thus by \cite[Corollary 3.3]{DGL} it is dp-minimal.   Let $\mc{M}$ be the standard simple product of $\mc{M}_0$ and $\mc{M}_1$.  Recall that $\mc{M}$ has universe $M_0 \times M_1$.

We think of each point $(n, x) \in M_0 \times M_1$ as representing $2\pi n + x \in \R$ and the operations $\sin_2$, $+_2$, as defined below, will correspond to the usual operations of $\sin(\cdot)$ and addition on $\R$. Set  $\sin_2 : M \to M$ to be the $\mc{M}$-definable function given by

 \[\sin_2((n,x))= \begin{cases}  (0, \sin{x}) & \text{ if } 0 \leq x \leq \pi \\ (-1, 2\pi-\sin(x-\pi)) & \text{ if }\pi < x < 2\pi\end{cases}\]  Similarly $+_2$ be the $\mc{M}$-definable binary operation given by:
 
 \[(m, x) +_2 (n,y)=\begin{cases} (m+n, x+ y) & \text{ if } x+y<2\pi \\ (m+n+1, x+ y-2\pi) & \text{ if } x+y \geq 2\pi \end{cases}\] 
 
 Note that the relation $E_s$ is used in defining $+_s$.  Also, the lexicographic ordering $<_{lex}$ on $M_0 \times M_1$, is $\mc{M}$-definable. 
 
 Notice that  $\langle M, +_2, <_{lex}, \sin_2{x}\rangle$ is isomorphic to $\langle \R; +, <, \sin{x} \rangle$ via the map $(n, x) \mapsto 2\pi n+x$.  

By Proposition~\ref{simple_product_bound}, the dp-rank of $\mc{M}$ is at most $2$, and as $\mc{R}$ is isomorphic to a reduct of $\mc{M}$ we conclude that $T$ has has dp-rank $2$.

\end{proof}

Finally we point out that the above example is not a reduct of a model of $T_{\ZZ}$.

\begin{prop}  Let $\mc{R}=\langle \R; +, <, \sin{x}\rangle$.  For no $Z \subseteq \R$ with $\mc{R}_Z=\langle \R; +, <, Z\rangle \models T_{\ZZ}$ is $\mc{R}$ a reduct of $\mc{R}_Z$.
\end{prop}

\begin{proof} Fix $Z \subseteq \R$ so that $\mc{R}_Z \models T_{\ZZ}$.   By the first part of Fact \ref{R0_facts} and \cite[Lemma 3.2]{DG} if $(a,b) \subset \R$ is a bounded interval and $f: (a,b) \to \R$ is definable in $\mc{R}_Z$ then $f$ is piecewise linear.  But clearly $\sin : (0,1) \to \R$ is not piecewise linear hence $\mc{R}$ is not a reduct of $\mc{R}_Z$.
\end{proof}
\bibliography{modelth}

\end{document}